\documentclass[12pt,english,british]{article}
\usepackage[T1]{fontenc}
\usepackage[latin9]{inputenc}
\usepackage{geometry}
\geometry{verbose,tmargin=2cm,bmargin=2cm,lmargin=2cm,rmargin=2cm}
\usepackage{color}
\usepackage{amsmath}
\usepackage{amsthm}
\usepackage{amssymb}

\makeatletter
\numberwithin{equation}{section}
\numberwithin{figure}{section}
  \theoremstyle{remark}
  \newtheorem*{rem*}{\protect\remarkname}
\theoremstyle{plain}
\newtheorem{thm}{\protect\theoremname}[section]
  \theoremstyle{plain}
  \newtheorem{prop}[thm]{\protect\propositionname}
  \theoremstyle{remark}
  \newtheorem{rem}[thm]{\protect\remarkname}
  \theoremstyle{plain}
  \newtheorem{lem}[thm]{\protect\lemmaname}
\newtheorem{cor}[thm]{\protect\corollaryname}


\makeatother

\usepackage{babel}
\usepackage{ulem}
  \addto\captionsbritish{\renewcommand{\lemmaname}{Lemma}}
  \addto\captionsbritish{\renewcommand{\propositionname}{Proposition}}
  \addto\captionsbritish{\renewcommand{\remarkname}{Remark}}
  \addto\captionsbritish{\renewcommand{\theoremname}{Theorem}}
  \addto\captionsenglish{\renewcommand{\lemmaname}{Lemma}}
  \addto\captionsenglish{\renewcommand{\propositionname}{Proposition}}
  \addto\captionsenglish{\renewcommand{\remarkname}{Remark}}
  \addto\captionsenglish{\renewcommand{\theoremname}{Theorem}}
  \providecommand{\lemmaname}{Lemma}
  \providecommand{\propositionname}{Proposition}
  \providecommand{\remarkname}{Remark}
\providecommand{\theoremname}{Theorem}
\providecommand{\corollaryname}{Corollary}


\def\diff{\mathop{\mathrm d\null}\!}

\let\four=\mathbf
\def\fourmetric{{\four g}}
\def\fourRicci{{\four R}}
\def\orbitmetric{\tilde g}
\def\p#1#2{f_{#1}^{#2}}
\def\tr{\mathop{\mathrm{tr}}\nolimits}

\frenchspacing

\begin{document}

\begin{center}
\Large The equivalence problem for generic four-dimensional metrics 
with two commuting Killing vectors
\\[2ex]
\normalsize D. Catalano Ferraioli%
\footnote{Instituto de Matem\'atica e Estat\'{\i}stica, Universidade Federal da Bahia, 
Campus de Ondina, Av. Adhemar de Barros, S/N, Ondina -- CEP 40.170.110 -- Salvador, 
BA -- Brazil, e-mail: \tt diego.catalano@ufba.br}
and  M. Marvan%
\footnote{Mathematical Institute in Opava, Silesian
  University in Opava, Na Rybn\'{\i}\v{c}ku 626/1, 746\,01 Opava, Czech Republic, 
  email: \tt Michal.Marvan@math.slu.cz}
\\[3ex]

February 20, 2019
\end{center}


\begin{abstract}
We consider the equivalence problem of four-dimensional semi-Riemannian metrics with the
$2$-dimensional Abelian Killing algebra. 
In the generic case we determine a semi-invariant frame and a fundamental set of first-order scalar differential invariants suitable for solution of the equivalence problem. 
Genericity means that the Killing leaves are not null, the metric is not orthogonally transitive (i.e., the distribution orthogonal to the Killing leaves is non-integrable), and two explicitly constructed scalar 
invariants $C_\rho$ and $\ell_{\mathcal C}$ are nonzero.
All the invariants are designed to have tractable coordinate expressions. 
Assuming the existence of two functionally independent invariants, we solve the equivalence problem in two ways.
As an example, we invariantly characterise the Van den Bergh metric.
To understand the non-generic cases, we also find all $\Lambda$-vacuum metrics that are generic in the above sense, 
except that either $C_\rho$ or $\ell_{\mathcal C}$ is zero. 
In this way we extend the Kundu class to $\Lambda$-vacuum metrics.
The results of the paper can be exploited for invariant characterisation of classes of metrics and for extension of the set of known solutions of the Einstein equations.

\vspace{0.5cm}
\noindent
\footnotesize {\bf Keywords:} differential invariants,
metric equivalence problem, Kundu class \\
\footnotesize {\bf MSC:} 83C20, 35Q76. 

\end{abstract}

\section{Introduction}

Scalar differential invariants have multiple uses in general relativity. 
Scalar {\it polynomial\/} invariants
\cite{C-M} arise as scalar contractions of $g,R$ and the 
covariant derivatives $\nabla R, \dots, \nabla^m R$. 
Besides being a tool to detect true singularities irremovable by coordinate
transformations, scalar differential invariants provide a basis for solving the 
equivalence problem, i.e., the problem of classifying spacetime metrics with respect to
local isometries.
Scalar differential invariants can, in principle, solve the equivalence problem 
except for metrics of the Kundt class~\cite{C-H-P}, but not in an effective way.
Here the Cartan--Karlhede invariants, see~\cite{Ka, Ka2006} or
\cite[Ch.~9]{S-K-M-H-H}, come to the rescue.
Cartan--Karlhede invariants, defined as components of the Riemann tensor and its 
covariant derivatives with respect to suitably chosen frames, lie in the heart 
of a workable algorithm to decide about 
equivalence of space-time metrics~\cite{A-Ka,Ka-M,P-S-dI3}. 
Another useful application is that of finding solutions of Einstein's equations by 
imposing additional invariant constraints~\cite{B-M,L-Yu 1,L-Yu 2,P-P-C-M}.

All invariants mentioned so far started at the second order,
a strict lower bound for scalar invariants of metrics~\cite{Zor}.
To enable first-order metric invariants, one would have to reduce the pseudogroup
of diffeomorphisms. 
One important case when this is easily done is when the metric has Killing 
fields. 
The semi-Riemannian manifold then becomes a  submersion in a natural way and 
instead of the equivalence of space-times we can consider the equivalence 
of the semi-Riemannian submersions. 
This removes the ban on the first-order invariants, while not causing any harm to 
solution of the equivalence problem, if the submersion is taken with respect to 
the full Killing algebra.

More precisely, in the case of a semi-Riemannian manifold 
$(\mathcal M,\mathbf{g})$ with the Killing algebra 
$\mathfrak{Kill}(\mathbf{g})=\mathcal{G}$ we consider the
semi-Riemannian submersion $\mathcal M \to \mathcal M/G$, where $\mathcal M/G$ 
is the orbit space of the Lie group  
$G$ of the transformations generated  by $\mathcal{G}$  on $\mathcal M$.
Obviously, two semi-Riemannian manifolds are isometric if and only if the
corresponding semi-Riemannian submersion structures are isomorphic.

In the present paper we apply the above scheme to the particular class, 
denoted $\mathrm{G}_{2}$, 
of four-dimensional semi-Riemannian metrics whose Killing algebra is two-dimensional
and generated by two commuting Killing fields $\xi_1,\xi_2$ 
(this class is denoted by $\mathrm{G}_2\mathrm{I}$ in~\cite[Ch.~17]{S-K-M-H-H}).
We also require that the Killing leaves (orbits) of the foliation are non-null,
i.e., the metric restricted to the leaves is not degenerated.
For these metrics we obtain a fundamental system of functionally independent scalar 
differential invariants and solve the problem of equivalence.

By considering the distribution  $\Xi^{\perp}$, orthogonal to
the Killing leaves, we shall distinguish two cases:
 the case when $\Xi^{\perp}$ is not integrable, which will be referred to as the 
 \textit{orthogonally intransitive case}; the case when $\Xi^{\perp}$
is integrable, which will be referred to as the 
\textit{orthogonally transitive case}~\cite{Ca2}.

In the orthogonally intransitive case we construct a fundamental system
of $6$ functionally independent first-order scalar differential invariants  
as well as a first-order (semi-)invariant frame.
Moreover, these six invariants admit very simple explicit expressions in terms
of metric coefficients, in sharp contrast to the curvature invariants 
mentioned above. 

In the orthogonally transitive case only
$4$ functionally independent first-order scalar differential invariants exist; 
this case has been completely solved in~\cite{M-S} 
(without constructing an invariant frame).

We have chosen the class $\mathrm{G}_{2}$ because it is rather rich in explicit  solutions of Einstein equations, 
especially in the orthogonally transitive subcase (when the vacuum, 
electro-vacuum and some other cases are integrable in the sense of soliton theory), 
see \cite{Aleks,C2,S-K-M-H-H,K-R,Ve} and references therein.
At the same time, only a handful of orthogonally intransitive metrics are known (e.g., \cite{Gaff, K-S, Bergh, W-R, WCL}).
The treatment of non-generic cases is generally left aside.
In this paper we just characterise metrics with vanishing invariants $C_\rho$ or 
$\ell_{\mathcal C}$. 
In the latter case (Kundu class) we obtained new explicit
$\Lambda$-vacuum solutions.

The paper is organized as follows. In Section~\ref{sec2} we introduce the Lie pseudogroup $\mathfrak{G}$ acting on four-dimensional semi-Riemannian manifolds of class $\mathrm{G}_{2}$. In Section \ref{sec3}, denoting by $\mathfrak{G}_{\tau}$ the natural extension of $\mathfrak{G}$ to the bundle $\tau$ of metrics, we describe the infinitesimal generators of $\mathfrak{G}_{\tau}$ and determine the number of functionally independent differential invariants of jet orders $0,\,1$ and $2$.  In Section~\ref{sec4} we introduce the metric  $\tilde{\mathbf{g}}$ on the orbit space $\mathcal{S}=\mathcal{M}/G_2$; this metric is referred to as the \textit{orbit metric} throughout the paper. In Section~\ref{sec5} we introduce a maximal set of $6$ generically functionally independent scalar differential invariants $C_{\rho},\, C_{\chi},\,
Q_{\chi},\,Q_{\gamma},\, \ell_{\mathcal{C}},\,(\Theta_{\mathrm{I}})^{2}$ of the first order. In the generic case, when $C_{\rho}$ and $\ell_{\mathcal{C}}$ do not vanish, we also provide a semi-invariant orthogonal frame. 
Moreover, we obtain a number of further first-order scalar differential invariants and discuss their functional dependence on the $6$ independent invariants. In Section~\ref{sec6}, we provide a maximal set of $20$ generically functionally independent scalar differential invariants of the second order. 
In Section~\ref{sect:Lambda vacuum eqs}, we derive the $\Lambda$-vacuum Einstein equations for $\mathrm{G}_{2}$-metrics, and find their solutions in non-generic cases $C_\rho=0$ and $ \ell_{\mathcal{C}}=0$. In the first case we find that all $\Lambda$-vacuum solutions of Einstein equations are \textit{pp}-waves with all first-order invariants identically zero. In the second case we extend to the $\Lambda$-vacuum case the explicit solutions originally presented by Kundu in the case when $\Lambda=0$. In particular we present there two new solutions of  $\Lambda$-vacuum Einstein equations. In Section~\ref{sec8}, we answer the question of how many invariants are functionally independent on solutions to the  $\Lambda$-vacuum Einstein equations.
Finally, in Section \ref{sect:equivalence}, we address the equivalence problem of $\mathrm G_2$-metrics in the generic case.

\section{The pseudogroup and the metric }
\label{sec2}

Let $\mathcal{M}$ be a four-dimensional manifold, endowed with a
two-dimensional Abelian algebra of vector fields $\mathcal{G}_{2}$.
We denote by $\Xi$ the vector distribution generated on $\mathcal{M}$
by vector fields of $\mathcal{G}_{2}$ and by $G_2$ the Lie group of transformations 
generated by  $\mathcal{G}_{2}$ on $\mathcal{M}$. 
Throughout the paper we  assume that the {\it orbit space} 
$\mathcal{S}=\mathcal{M}/G_2$ is a $2$-dimensional manifold,
with $\pi:\mathcal{M}\to\mathcal{S}$ being the natural projection.

One can always choose local coordinates $\{t^{1},t^{2},z^{1},z^{2}\}$
on $\mathcal{M}$ such that: 
\begin{description}
\item [{(1)}] $\mathcal{G}_{2}$ is generated by the coordinate vector
fields $\xi_{(i)}=\partial/\partial z^{i}$, $i=1,2$; 
\item [{(2)}] the leaves of $\Xi$ are the surfaces characterized by the
constancy of $t^{1}$ and $t^{2}$. 
\end{description}
We  refer to such a kind of coordinates $\{t^{1},t^{2},z^{1},z^{2}\}$
as local \textit{adapted coordinates}, and denote by $\mathfrak{G}$
the Lie pseudogroup of adapted coordinates transformations. By
a $\mathfrak{G}$-transformation we mean an element of $\mathfrak{G}$;
by definition $\mathfrak{G}$-transformations are coordinate transformations
$\bar{t}^{\,i}=\bar{t}^{i}(t^{1},t^{2},z^{1},z^{2})$, $\bar{z}^{\,i}=\bar{z}^{i}(t^{1},t^{2},z^{1},z^{2})$
which preserve (1)--(2), i.e., such that $\mathcal{G}_{2}$ is generated
by $\partial/\partial\bar{z}^{i}$, $i=1,2$, and the leaves of $\Xi$
are surfaces characterized by the constancy of $\bar{t}^{1}$ and
$\bar{t}^{2}$. 


\begin{prop}
\label{prop:pseudogroup}The Lie pseudogroup $\mathfrak{G}$ is formed
by transformations $P:\mathcal{M}\rightarrow\mathcal{M}$ which in
adapted coordinates have the form
\begin{equation}
\bar{t}^{\,i}=\phi^{i}(t^{1},t^{2}),\qquad\bar{z}^{\,i}=\alpha_{j}^{i}\,z^{j}+\psi^{i}(t^{1},t^{2}),\label{eq:psgroup_fin}
\end{equation}
where $\phi^{i}(t^{1},t^{2})$ and $\psi^{i}(t^{1},t^{2})$ are arbitrary
differentiable functions satisfying 
\begin{equation}
J_{\phi}=\left|\begin{array}{@{}ll@{}}
\partial_{t^{1}}\phi^{1} & \partial_{t^{2}}\phi^{1}\\
\partial_{t^{1}}\phi^{2} & \partial_{t^{2}}\phi^{2}
\end{array}\right|\neq0,\label{eq:psgroup_fin_nondeg}
\end{equation}
and $\alpha_{j}^{i}\in\mathbb{R}$, with $(\alpha_{j}^{i})\in \mathrm{GL}(2, \mathbb{R})$. 

Infinitesimal generators of $\mathfrak{G}$ have the form 
\begin{equation}
U=\Phi^{i}(t^{1},t^{2})\frac{\partial}{\partial t^{i}}+\left(A_{l}^{k}z^{l}+\Psi^{k}(t^{1},t^{2})\right)\frac{\partial}{\partial z^{k}},\label{eq:ps_group_inf}
\end{equation}
where $\Phi^{i}(t^{1},t^{2})$ and $\Psi^{k}(t^{1},t^{2})$ are arbitrary
differentiable functions and $A_{l}^{k}\in\mathbb{R}$ are arbitrary constants. 

In particular, $\mathfrak{G}$ can be decomposed as 
\begin{equation}
\mathfrak{G}=\mathfrak{G}_{+,+}\cup\mathfrak{G}_{+,-}\cup\mathfrak{G}_{-,+}\cup\mathfrak{G}_{-,-},\label{eq:psgroup_decomp}
\end{equation}
 where $\mathfrak{G}_{\epsilon_{1},\epsilon_{2}}$ are the connected
components, with $\epsilon_{1}=\mathop{\mathrm{sgn}} J_{\phi}$ 
and $\epsilon_{2}=\mathop{\mathrm{sgn}}(\mathop{\mathrm{det}}\alpha_{j}^{i})$. 
\end{prop}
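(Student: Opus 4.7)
The plan is to derive the form of a $\mathfrak{G}$-transformation by successively imposing the defining conditions (1) and (2), then linearise to get the infinitesimal generators, and finally sort out the connected components.

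First I would handle the dependence on $\bar t^i$. Condition (2) says that the leaves of $\Xi$ are precisely $\{\bar t^1=\mathrm{const},\,\bar t^2=\mathrm{const}\}$, so the functions $\bar t^i$ are first integrals of $\Xi$. Since $\Xi$ is spanned by $\partial/\partial z^1,\partial/\partial z^2$, this amounts to $\partial\bar t^i/\partial z^j=0$ for $i,j=1,2$, hence $\bar t^{\,i}=\phi^i(t^1,t^2)$. For the barred quadruple to be a local coordinate system, the restriction $(t^1,t^2)\mapsto(\phi^1,\phi^2)$ must be a local diffeomorphism of $\mathcal S$, giving \eqref{eq:psgroup_fin_nondeg}.

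Next I would exploit condition (1). Using $\partial\bar t^k/\partial z^i=0$, the chain rule gives
\begin{equation*}
\frac{\partial}{\partial z^i}=\frac{\partial\bar z^{\,k}}{\partial z^i}\,\frac{\partial}{\partial\bar z^{\,k}}.
\end{equation*}
By (1) the fields $\partial/\partial\bar z^{\,k}$ span the same real $2$-dimensional Lie algebra $\mathcal G_2$ as $\partial/\partial z^i$; in particular each $\partial/\partial z^i$ is a constant $\mathbb R$-linear combination of the $\partial/\partial\bar z^{\,k}$. Hence $\partial\bar z^{\,k}/\partial z^i=\alpha^k_i\in\mathbb R$, and the matrix $(\alpha^k_i)$ must be invertible for the transformation to be a diffeomorphism, i.e.\ $(\alpha^k_i)\in \mathrm{GL}(2,\mathbb R)$. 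Integrating yields $\bar z^{\,i}=\alpha^i_j z^j+\psi^i(t^1,t^2)$, which completes the form \eqref{eq:psgroup_fin}. Conversely, any map of this form with $J_\phi\neq0$ and $\det\alpha\neq0$ is a diffeomorphism preserving (1)--(2).

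The infinitesimal statement I would obtain either by linearising the finite form around the identity, or directly as follows: a vector field $U=X^j\partial/\partial t^j+Z^k\partial/\partial z^k$ generates $\mathfrak{G}$ iff its flow preserves $\Xi$ and $\mathcal G_2$, which infinitesimally means $[\xi_{(i)},U]\in\mathcal G_2$ for $i=1,2$. Since $[\xi_{(i)},U]=(\partial X^j/\partial z^i)\partial/\partial t^j+(\partial Z^k/\partial z^i)\partial/\partial z^k$, this forces $\partial X^j/\partial z^i=0$ and $\partial Z^k/\partial z^i=A^k_i\in\mathbb R$, giving exactly \eqref{eq:ps_group_inf}.

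Finally, for the decomposition \eqref{eq:psgroup_decomp} I would note that $\mathfrak{G}$ fibers over the disconnected spaces $\mathrm{Diff}(\mathcal S)$ and $\mathrm{GL}(2,\mathbb R)$, each of which has two connected components indexed by the sign of the respective Jacobian; the fibres (translations $\psi^i$ plus the connected piece of each factor) are themselves connected, so the four components of $\mathfrak{G}$ are labelled by $(\epsilon_1,\epsilon_2)=(\operatorname{sgn} J_\phi,\operatorname{sgn}\det\alpha)$. The only mildly subtle step is the constancy of $\alpha^k_i$ in part (1); everything else is book-keeping with the chain rule.
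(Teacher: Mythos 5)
Your proof is correct and follows essentially the same route as the paper: both derive $\partial\bar t^{\,i}/\partial z^{j}=0$ and $\partial\bar z^{\,i}/\partial z^{j}=\alpha^i_j=\mathrm{const}$ from the requirement that $\{\partial/\partial\bar z^{\,k}\}$ is another basis of the fixed two-dimensional algebra $\mathcal G_2$, and both obtain the generators from the condition $[\partial/\partial z^{l},U]\in\mathcal G_2$. You additionally spell out the connected-component decomposition, which the paper's proof leaves implicit; that is a harmless (indeed welcome) addition.
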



\begin{proof}
Under a $\mathfrak{G}$-transformation $\partial/\partial z^{j}=\alpha_{j}^{i}\,\partial/\partial\bar{z}^{\,i}$,
with $(\alpha_{j}^{i})\in \mathrm{GL}(2, \mathbb{R}).$ On the
other hand, since $\partial/\partial z^{j}=\left(\partial\bar{z}^{\,i}/\partial z^{j}\right)\partial/\partial\overline{z}^{\,i}+\left(\partial\bar{t}^{\,i}/\partial z^{j}\right)\partial/\partial\overline{t}^{\,i}$,
one gets $\partial\bar{z}^{\,i}/\partial z^{j}=\alpha_{j}^{i}$, $\partial\bar{t}^{\,i}/\partial z^{j}=0$.
Hence, the $\mathfrak{G}$-transformations have the required form.

On the other hand, a vector field 
$U=T^{i}(t,z)\,\partial/\partial t^{i}+Z^{k}(t,z)\,\partial/\partial z^{k}$
is an infinitesimal generator of $\mathfrak{G}$ iff $U$ is an infinitesimal
symmetry of the Lie algebra generated by $\xi_{(1)}=\partial/\partial z^{1}$
and $\xi_{(2)}=\partial/\partial z^{2}$. Therefore, 
$\left[\partial/\partial z^{l},U\right]=A_{l}^{k}\,\partial/\partial z^{k}$,
with $A_{l}^{k}$ arbitrary constants. Hence, $\partial T^{i}(t,z)/\partial z^{l}=0$,
$\partial Z^{k}(t,z)/\partial z^{l}=A_{l}^{k}$, and the statement
readily follows. 
\end{proof}
Assume now that $\mathcal{M}$ is endowed with a Riemannian or pseudo-Riemannian
metric $\mathbf{g}$ and that the algebra of Killing vector fields of $\mathbf{g}$ 
is the two-dimensional
Abelian algebra $\mathcal{G}_{2}$, i.e., 
$$
\mathfrak{Kill}(\mathbf{g})=\mathcal{G}_{2}
$$
In particular, there are no Killing vectors outside $\mathcal{G}_{2}$.

The 2-dimensional integral submanifolds of $\Xi$ are called the
\textit{Killing leaves}.
In adapted coordinates, the metric $\mathbf{g}$ takes the form 
\begin{equation}
\mathbf{g}=b_{ij}(t^{1},t^{2})\,dt^{i}\,dt^{j}+2f_{ik}(t^{1},t^{2})\,dt^{i}\,dz^{k}+h_{kl}(t^{1},t^{2})\,dz^{k}\,dz^{l},\label{gg}
\end{equation}
with 
\[
b_{21}=b_{12},\quad h_{21}=h_{12}.
\]
It is worth noting here that, under $\mathfrak{G}$-transformations
(\ref{eq:psgroup_fin}), $\mathbf{g}$ transforms
to 
\begin{equation}\label{ggbis}
\bar{\mathbf{g}}=\bar{b}_{mn}(\bar{t}^{1},\bar{t}^{2})\,d\bar{t}^{m}\,d\bar{t}^{n}+2\bar{f}_{mr}(\bar{t}^{1},\bar{t}^{2})\,d\bar{t}^{m}\,d\bar{z}^{r}+\bar{h}_{rs}(\bar{t}^{1},\bar{t}^{2})\,d\bar{z}^{r}\,d\bar{z}^{s},
\end{equation}
with
\begin{equation}
\begin{array}{l}
b_{ij}=\bar{b}_{mn}{\displaystyle \frac{\partial\phi^{m}}{\partial t^{i}}}{\displaystyle \frac{\partial\phi^{n}}{\partial t^{j}}}+2\bar{f}_{mr}{\displaystyle \frac{\partial\phi^{m}}{\partial t^{i}}}{\displaystyle \frac{\partial\psi^{r}}{\partial t^{j}}}+\bar{h}_{rs}{\displaystyle \frac{\partial\psi^{r}}{\partial t^{i}}\frac{\partial\psi^{s}}{\partial t^{j}}},\vspace{5pt}\\
f_{ik}={\displaystyle \bar{f}_{mr}\alpha_{k}^{r}\frac{\partial\phi^{m}}{\partial t^{i}}+\bar{h}_{rs}\alpha_{k}^{r}\frac{\partial\psi^{s}}{\partial t^{i}}},\vspace{5pt}\\
h_{kl}=\bar{h}_{rs}\alpha_{k}^{r}\alpha_{l}^{s}.
\end{array}
\label{cond_equiv}
\end{equation}
In particular
\begin{equation}
\mathop{\mathrm{det}}\mathbf{\bar{g}}=(\mathop{\mathrm{det}}\alpha_{j}^{i})^{2}\,\left(J_{\phi}\right)^{2}\,\mathop{\mathrm{det}}\mathbf{g}\neq0.\label{eq:psgroup_ametr}
\end{equation}

\begin{prop}
\label{prop:PG acts on tau}The pseudogroup $\mathfrak{G}$ naturally
extends to the bundle of symmetric $(0,2)$-tensor fields on
$\mathcal{M}$ and its action preserves the sub-bundle $\tau:E\rightarrow\mathcal{M}$
of metrics of the form {\rm(\ref{gg})} on $\mathcal{M}$.
\end{prop}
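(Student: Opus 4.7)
The plan is to split the statement into two parts: the natural extension of $\mathfrak{G}$ to the bundle of symmetric $(0,2)$-tensor fields, and the invariance of the sub-bundle $\tau$ of metrics of the form (\ref{gg}). The first part is essentially automatic: since every $P\in\mathfrak{G}$ is a local diffeomorphism of $\mathcal{M}$, it acts by pullback $P^{*}$ on any natural tensor bundle, in particular on $S^{2}T^{*}\mathcal{M}$. The substantive content of the proposition lies in showing that this pullback action preserves $\tau$.

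For the invariance of $\tau$, I would rely on an intrinsic characterisation of the form (\ref{gg}). A symmetric $(0,2)$-tensor field $\mathbf{g}$ on $\mathcal{M}$ has the form (\ref{gg}) in adapted coordinates if and only if its components are independent of $z^{1},z^{2}$, equivalently, if and only if $\mathcal{L}_{\xi_{(i)}}\mathbf{g}=0$ for $i=1,2$. In other words, $\tau$ is precisely the sub-bundle of $\mathcal{G}_{2}$-invariant symmetric $(0,2)$-tensors. The conclusion then follows from the defining property of $\mathfrak{G}$: by Proposition \ref{prop:pseudogroup}, for any $P\in\mathfrak{G}$ the pushforward $P_{*}\xi_{(i)}$ is again a constant-coefficient linear combination of the generators $\xi_{(j)}$ (since $\partial/\partial z^{j}=\alpha_{j}^{i}\,\partial/\partial\bar{z}^{\,i}$ with $\alpha_{j}^{i}$ constant). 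Hence $\mathcal{L}_{\xi_{(i)}}(P^{*}\mathbf{g})=P^{*}(\mathcal{L}_{P_{*}\xi_{(i)}}\mathbf{g})$ vanishes whenever $\mathcal{L}_{\xi_{(j)}}\mathbf{g}=0$ for $j=1,2$, so $P^{*}\mathbf{g}\in\tau$ as required.

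As a sanity check one can verify this directly from the transformation laws (\ref{cond_equiv}): the right-hand sides depend only on $\bar{b}_{mn}(\bar{t}), \bar{f}_{mr}(\bar{t}), \bar{h}_{rs}(\bar{t})$ composed with $\bar{t}^{\,i}=\phi^{i}(t^{1},t^{2})$, together with the constants $\alpha_{j}^{i}$ and the partial derivatives of $\phi,\psi$ with respect to $t^{1},t^{2}$, so the resulting $b_{ij}, f_{ik}, h_{kl}$ depend only on $t^{1},t^{2}$. There is no real obstacle here: the proof is an immediate consequence of the fact that $\mathfrak{G}$ preserves the Lie algebra $\mathcal{G}_{2}$ by construction, and the only point to be careful about is identifying the adapted form (\ref{gg}) with the coordinate-free condition of $\mathcal{G}_{2}$-invariance, which makes the pullback argument a one-line computation rather than a direct manipulation of (\ref{cond_equiv}).
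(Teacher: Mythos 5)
Your proposal is correct. The paper's own proof is a one-line pointer to the already-displayed transformation laws \eqref{ggbis} and \eqref{cond_equiv}, which is exactly your ``sanity check'' paragraph: the transformed coefficients $b_{ij},f_{ik},h_{kl}$ visibly depend only on $(t^1,t^2)$, so the sub-bundle is preserved. Your primary argument is a genuinely different and more intrinsic route: you characterise the fibre of $\tau$ coordinate-freely as the $\mathcal{G}_2$-invariant symmetric $(0,2)$-tensors ($\mathcal{L}_{\xi_{(i)}}\mathbf{g}=0$), and then invoke the fact, established in Proposition~\ref{prop:pseudogroup}, that $P_*\xi_{(i)}$ is a constant-coefficient combination of the generators, together with naturality of the Lie derivative. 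This buys conceptual clarity --- it makes plain that the invariance of $\tau$ is nothing but the statement that $\mathfrak{G}$ normalises $\mathcal{G}_2$, and it avoids re-deriving \eqref{cond_equiv} --- at the cost of being slightly longer than the paper's appeal to a computation it has already displayed. Both arguments are complete; no gap.
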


\begin{proof}
See formulas \eqref{ggbis} and \eqref{cond_equiv}.
\end{proof}

The extension of $\mathfrak{G}$
to $\tau$ will be denoted by $\mathfrak{G}_{\tau}$.

\section{Pseudogroup prolongation and differential invariants}
\label{sec3}

In view of Proposition \ref{prop:PG acts on tau}, the classification
problem for metrics with an Abelian $2$-dimensional Killing algebra
$\mathcal{G}_{2}$ reduces to identifying orbits of the action of
$\mathfrak{G}_{\tau}$ on the bundle $\tau:E\rightarrow\mathcal{M}$
of metrics $\mathbf{g}$ of the form (\ref{gg}); indeed these orbits
consist of mutually equivalent metrics.

Following Lie's classical method, the classification problem for these
metrics can be solved by using a sufficient number of independent
scalar differential invariants of $\mathfrak{G}_{\tau}$. These invariants
are defined to be functions on the jet prolongations $J^{m}\tau$,
$m=0,1,2,...$, that are invariant with respect to the action of the
corresponding prolonged pseudogroups $\mathfrak{G}_{\tau}^{(m)}$. 

The problem of finding the $m$-th order scalar differential invariants
becomes linear if written in terms of the infinitesimal action of
$\mathfrak{G}_{\tau}^{(m)}$ on $J^{m}\tau$. This fact is at the
heart of Lie's infinitesimal method of computing differential invariants
and also permits a simple determination of the dimensions $N_{m}$
of the orbit spaces $J^{m}\tau/\mathfrak{G}_{\tau}^{(m)}$ for $m=0,1,2,...$
\begin{prop}
By using the coordinate representation {\rm(\ref{gg})}, the pseudogroup
$\mathfrak{G}_{\tau}$ is infinitesimally generated by vector fields
\begin{equation}
\begin{array}{l}
\displaystyle 
U^{\tau} 
= \Phi^{i} \frac{\partial}{\partial t^{i}}
+ \left(A_{l}^{k}z^{l}+\Psi^{k}\right) \frac{\partial}{\partial z^{k}}
- \left(b_{is} \frac{\partial\Phi^{s}}{\partial t^{j}}
      + f_{is} \frac{\partial\Psi^{s}}{\partial t^{j}}
      + b_{js} \frac{\partial\Phi^{s}}{\partial t^{i}}
      + f_{js} \frac{\partial\Psi^{s}}{\partial t^{i}}\right)
  \frac{\partial}{\partial b_{ij}}
\vspace{8pt}\\
\qquad\displaystyle 
-\left(f_{sk} \frac{\partial\Phi^{s}}{\partial t^{i}}
     + h_{sk} \frac{\partial\Psi^{s}}{\partial t^{i}}
     + f_{is}A_{k}^{s}\right) \frac{\partial}{\partial f_{ik}}
-\left(h_{ks}A_{l}^{s}+h_{sl}A_{k}^{s}\right)
 \frac{\partial}{\partial h_{kl}}.\vspace{8pt}
\end{array}\label{gener_U_tau}
\end{equation}
where $\Phi^{i},\Psi^{i},A_{l}^{k}$ are as in Proposition~{\rm\ref{prop:pseudogroup}},
formula~\eqref{eq:ps_group_inf}.
\end{prop}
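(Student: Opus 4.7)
The plan is to derive $U^{\tau}$ by linearising the finite transformation laws \eqref{cond_equiv} around the identity of $\mathfrak{G}$, exploiting the fact already established in Proposition~\ref{prop:pseudogroup} that the infinitesimal generator of $\mathfrak{G}$ on $\mathcal{M}$ has the form \eqref{eq:ps_group_inf}. Concretely, for a one-parameter family $P_{\epsilon}\in\mathfrak{G}$ with generator $U$, we write
\[
\phi^{i}=t^{i}+\epsilon\,\Phi^{i}+O(\epsilon^{2}),\qquad
\psi^{i}=\epsilon\,\Psi^{i}+O(\epsilon^{2}),\qquad
\alpha_{j}^{i}=\delta_{j}^{i}+\epsilon\,A_{j}^{i}+O(\epsilon^{2}),
\]
and set $\bar b_{ij}=b_{ij}+\epsilon\,\delta b_{ij}+O(\epsilon^{2})$, etc.

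The first step is to substitute these expansions into the third relation of \eqref{cond_equiv}, $h_{kl}=\bar h_{rs}\alpha_{k}^{r}\alpha_{l}^{s}$, expand $\bar h_{rs}(\bar t)=\bar h_{rs}(t)+\epsilon\Phi^{i}\partial_{t^{i}} h_{rs}+O(\epsilon^{2})$, and collect the $O(\epsilon)$ terms to obtain the pointwise variation
\[
\delta h_{kl}=-\Phi^{i}\partial_{t^{i}}h_{kl}-\bigl(h_{sl}A_{k}^{s}+h_{ks}A_{l}^{s}\bigr).
\]
Repeating the computation with the second and first lines of \eqref{cond_equiv} produces $\delta f_{ik}$ and $\delta b_{ij}$; the only extra subtlety is the appearance of mixed terms $f_{sk}\partial_{t^{i}}\Phi^{s}+h_{sk}\partial_{t^{i}}\Psi^{s}$ and the symmetrisation in $i,j$ of analogous contributions in $\delta b_{ij}$.

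The second step is to assemble these variations into a vector field on the total space $E$. Because the horizontal part of $U^{\tau}$ projects onto $U$ and already transports the point $t$, the $\Phi^{i}\partial_{t^{i}}$-piece of $\delta h_{kl}$ (and analogously of $\delta f_{ik}$, $\delta b_{ij}$), which comes from evaluating the component functions at the displaced point $\bar t=t+\epsilon\Phi$, is absorbed by the horizontal part and must be subtracted out of the vertical coefficient. What remains is the purely algebraic part, which is precisely the coefficient of each $\partial/\partial b_{ij}$, $\partial/\partial f_{ik}$, $\partial/\partial h_{kl}$ appearing in \eqref{gener_U_tau}.

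The main obstacle is bookkeeping: one has to separate (i) the contribution from the Taylor expansion $\bar g_{ab}(\bar t)-\bar g_{ab}(t)\sim\epsilon\,\Phi^{i}\partial_{t^{i}}g_{ab}$, and (ii) the algebraic contribution from the Jacobians $\partial\phi/\partial t$, $\partial\psi/\partial t$ and the matrix $\alpha$, and to verify that only (ii) survives in the vertical part of $U^{\tau}$. As an independent check one computes $\mathcal{L}_{U}\mathbf{g}$ using the explicit form \eqref{gg} of $\mathbf{g}$ and the generator \eqref{eq:ps_group_inf}; removing the transport term $U^{\lambda}\partial_{\lambda}g_{\mu\nu}$ leaves exactly the three bracketed expressions in \eqref{gener_U_tau}, which confirms the result index by index.
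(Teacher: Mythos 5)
Your proof is correct. The paper's own argument is terser: it observes that $U^{\tau}$ must project onto $U$, hence has the general form $\Phi^{i}\partial_{t^{i}}+(A^{k}_{l}z^{l}+\Psi^{k})\partial_{z^{k}}+B_{ij}\partial_{b_{ij}}+F_{ik}\partial_{f_{ik}}+H_{kl}\partial_{h_{kl}}$ with unknown vertical coefficients, and then determines $B_{ij},F_{ik},H_{kl}$ by imposing the infinitesimal invariance condition $L_{U^{\tau}}(\mathbf{g})=0$ on the tautological metric \eqref{gg} --- which is precisely the computation you relegate to your ``independent check'' at the end. Your primary route, linearising the finite transformation laws \eqref{cond_equiv} in $\epsilon$ and then stripping the transport term $\Phi^{i}\partial_{t^{i}}(\cdot)$ out of the section variation to isolate the vertical components on $E$, is the finite-action counterpart of the same condition; the bookkeeping point you flag (separating the Taylor-expansion contribution at the displaced point from the algebraic contribution of the Jacobians and of $\alpha$) is exactly the passage from the characteristic $Q=\eta-\xi^{i}u_{i}$ back to the coefficient $\eta$, and you resolve it correctly, so the resulting coefficients agree with \eqref{gener_U_tau} index by index. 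What each approach buys: the paper's is more economical, since it never needs the expansions of $\alpha^{-1}$ or of $\bar h_{rs}(\bar t)$; yours makes the link between \eqref{cond_equiv} and \eqref{gener_U_tau} fully explicit and would also catch any sign or symmetrisation slip in \eqref{cond_equiv} itself.
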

\begin{proof}
Since $U^{\tau}$ projects to $U$, it has the form
\[
U^{\tau}=\Phi^{i}(t){\displaystyle \frac{\partial}{\partial t^{i}}}+\left(A_{l}^{k}z^{l}+\Psi^{k}(t)\right){\displaystyle \frac{\partial}{\partial z^{k}}}+B_{ij}\frac{\partial}{\partial b_{ij}}+F_{ik}\frac{\partial}{\partial f_{ik}}+H_{kl}\frac{\partial}{\partial h_{kl}},
\]
with $B_{ij}$, $F_{ik}$ and $H_{kl}$ differentiable functions of
$t^{1}$, $t^{2}$, $z^{1}$, $z^{2}$ and $f_{ij}$, $b_{ij}$, $h_{kl}$.
Then (\ref{gener_U_tau}) follows by imposing the Lie invariance condition 
\[
L_{U^{\tau}}(\mathbf{g})=0.
\]
\vskip-1.6\baselineskip
\end{proof}
Recall that $J^{0}\tau=\tau$ and that the formal derivatives of $b_{ij}$,
$f_{ik}$ and $h_{kl}$ of orders $m=0,1,2,...$ with respect to $t^{1},t^{2}$,
can serve as coordinates along the fibers of $J^{m}\tau$ in an obvious
way. We  denote such coordinates as $b_{ij,\mathbf{I}}$,
$f_{ij,\mathbf{I}}$ and $h_{kl,\mathbf{I}}$, for any symmetric multi-index
$\mathbf{I}$ when $m>1$, and $b_{ij,s}$, $f_{ij,s}$ and $h_{kl,s}$
when $m=1$. 

Prolongation formulas of $U^{\tau}$ to $U^{J^{m}\tau}$ on $J^{m}\tau$,
$m=1,2,...$, are well known (\cite{A-V-L,Ol,Ov}). Alternatively,
from the commutator 
\[
\left[\frac{\partial}{\partial t^{s}},U\right]=\frac{\partial\Phi^{j}}{\partial t^{s}}\frac{\partial}{\partial t^{j}}+\left(\frac{\partial A_{l}^{k}}{\partial t^{s}}z^{l}+\frac{\partial\Psi^{k}}{\partial t^{s}}\right)\frac{\partial}{\partial z^{k}}
\]
valid on the base manifold $\mathcal{S}$ one can infer the relations
$\left[U^{J^{\infty}\tau},D_{s}\right]=-\left(\partial\Phi^{1}/\partial t^{s}\right)D_{1}-\left(\partial\Phi^{2}/\partial t^{s}\right)D_{2}$
on $J^{\infty}\tau$, where 
\[
D_{s}=\frac{\partial}{\partial t^{s}}+b_{ij,\mathbf{I}+\mathbf{1}_{s}}\frac{\partial}{\partial b_{ij,\mathbf{I}}}+f_{ij,\mathbf{I}+\mathbf{1}_{s}}\frac{\partial}{\partial f_{ij,\mathbf{I}}}+h_{kl,\mathbf{I}+\mathbf{1}_{s}}\frac{\partial}{\partial h_{kl,\mathbf{I}}},\quad s=1,2,
\]
denote the usual total derivatives and $\mathbf{I}$ stands for an
arbitrary symmetric multi-index. These relations reflect the way how the action
on metric coefficients extends to the action on derivatives thereof. Thus,
for any symmetric multi-index $\mathbf{I}$ of order $m$, we have
\[
U^{J^{m+1}\tau}\left(b_{ij,\mathbf{I}+\mathbf{1}_{s}}\right)=D_{s}\left(U^{J^{m}\tau}\left(b_{ij,\mathbf{I}}\right)\right)-\frac{\partial\Phi^{1}}{\partial t^{s}}b_{ij,\mathbf{I}+\mathbf{1}_{1}}-\frac{\partial\Phi^{2}}{\partial t^{s}}b_{ij,\mathbf{I}+\mathbf{1}_{2}},
\]
and analogously for $f_{ik}$ and $h_{kl}$.

Now, scalar differential invariants can be identified with functions
on $J^{\infty}\tau$ invariant with respect to the fields $U^{J^{\infty}\tau}$,
for all admissible choices of the coefficients $\Phi^{i},A_{l}^{k},\Psi^{k}$.
These invariants form a commutative associative $\mathbb{R}$-algebra,
which can be thought of as algebra of functions on the orbit space
$J^{\infty}\tau/\mathfrak{G}_{\tau}^{(\infty)}$. 

By using the infinitesimal generators of $\mathfrak{G}_{\tau}^{(m)}$,
obtained for all admissible choices of the coefficients $\Phi^{i}, A_{l}^{k},\Psi^{k}$,
one can determine the dimensions of $\mathfrak{G}_{\tau}^{(m)}$ and
the corresponding (generic) dimensions of the orbit spaces $J^{m}\tau/\mathfrak{G}_{\tau}^{(m)}$.
For $m=0,1,2$ one has the following proposition.

\begin{prop}
\label{Nr} In the generic case, when $\Xi^{\perp}$ is not integrable
{\rm(}the orthogonally intransitive case{\rm)}, the generic dimension $N_{m}$ of
the orbit space $J^{m}\tau/\mathfrak{G}_{\tau}^{(m)}$ for $m=0,1,2$
is provided in the following table: 
\[
\begin{array}{r|rrrrrr}
m & 0 & 1 & 2 & \\
\hline N_{m} & 0 & 6 & 20 & 
\end{array}
\]
In the special case, when $\Xi^{\perp}$ is integrable {\rm(}orthogonally
transitive case{\rm),} the generic dimension $N_{m}$ of the orbit space
$J^{m}\tau/\mathfrak{G}_{\tau}^{(m)}$ for $m=0,1,2$ is provided
in the following table: 
\[
\begin{array}{r|rrrrrr}
m & 0 & 1 & 2 & \\
\hline N_{m} & 0 & 4 & 14 & 
\end{array}
\]
\end{prop}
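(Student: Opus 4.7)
The plan is to apply Lie's infinitesimal method: for each $m$, the generic dimension $N_m$ equals $\dim J^m\tau - r_m$, where $r_m$ is the generic rank, at a representative point, of the family of infinitesimal generators $U^{J^m\tau}$ viewed as a linear map from the parameter space to the tangent space $T_p J^m\tau$.

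First, I would tabulate the relevant dimensions. Since $b_{ij},f_{ik},h_{kl}$ depend only on $(t^1,t^2)$, the fibre of $\tau$ has dimension $3+4+3=10$, so
$$
\dim J^m\tau \;=\; 4 + 10\cdot\binom{m+2}{2} \;=\; 4+5(m+1)(m+2),
$$
giving $14,34,64$ for $m=0,1,2$. The parameters entering $U^{J^m\tau}$ are the values of $\Phi^i$ and $\Psi^k$ together with all their partial derivatives up to order $m+1$, plus the four constants $A_l^k$, i.e.\ $2(m+2)(m+3)+4$ parameters, which is $16,28,44$ for $m=0,1,2$. I would then prolong $U^\tau$ via the commutation identity $[U^{J^\infty\tau},D_s]=-(\partial_s\Phi^j)D_j$ noted in the text, reducing $r_m$ to a finite linear-algebra computation at a generic jet.

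At $m=0$, direct inspection of \eqref{gener_U_tau} reveals a $2$-dimensional isotropy: one dimension from the $h$-skew subalgebra of $A_l^k$ (the equation $h_{ks}A_l^s+h_{sl}A_k^s=0$ has a one-parameter family of solutions once $h_{kl}$ is non-degenerate), and a further dimension from a one-parameter family of $(\partial\Phi,\partial\Psi)$ left after $A_l^k$ has absorbed the $f$- and $h$-coefficients; so $r_0=14$ and $N_0=0$. For $m=1,2$ I would verify that the action is free at a generic jet, making $r_m$ equal to the full parameter count, whence $N_1=34-28=6$ and $N_2=64-44=20$. The genericity hypothesis supplying freeness is precisely the non-integrability of $\Xi^{\perp}$ together with non-vanishing of certain explicit minors built from first and second derivatives of the metric coefficients. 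For the orthogonally transitive case, I would observe that integrability of $\Xi^{\perp}$ is a $\mathfrak{G}_\tau$-invariant condition cut out by $2$ scalar first-order equations on $\mathbf{g}$; its prolongation adds $4$ further consequences at order $2$, so the invariant sub-bundle of orthogonally transitive jets has codimension $2$ in $J^1\tau$ and codimension $6$ in $J^2\tau$. Since the pseudogroup is tangent to this sub-bundle and the orbits coincide with those of the intransitive case restricted to it, we get $N_1^{\mathrm{OT}}=32-28=4$ and $N_2^{\mathrm{OT}}=58-44=14$.

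The principal obstacle is the rank computation at $m=1,2$: establishing that a $28\times 34$ and then a $44\times 64$ Jacobian has maximal row rank at a generic jet. I would exploit a block-triangular organisation in which the $A_l^k$ block acts on $h_{kl}$, the $\partial\Phi,\partial\Psi$ block on $(b_{ij},f_{ik})$, and higher-order jets of $\Phi,\Psi$ act (via prolongation through $D_s$) on the higher-order derivatives of the metric coefficients in order, so that the full-rank statement reduces to non-vanishing of the successive diagonal blocks, each controlled by an explicit non-degeneracy condition in the metric coefficients and their lower-order derivatives.
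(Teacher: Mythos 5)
Your proposal is correct and follows essentially the same route as the paper, which offers no explicit proof but states just before the proposition that the dimensions are obtained by Lie's infinitesimal method, i.e.\ by computing the generic rank of the prolonged generators $U^{J^m\tau}$ over all admissible $\Phi^i,\Psi^k,A^k_l$; your dimension counts ($\dim J^m\tau=14,34,64$; parameter counts $16,28,44$; codimension $2$ resp.\ $6$ of the orthogonally transitive locus) all agree with the stated values of $N_m$. The only part left open in both your sketch and the paper is the actual verification that the Jacobian attains maximal rank at a generic jet (freeness for $m=1,2$, two-dimensional isotropy for $m=0$), which in either treatment reduces to the finite linear-algebra check you describe.
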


It is worth noting here that the generic dimensions $N_{m}$ refer
to non-singular strata of the orbit space $J^{\infty}\tau/\mathfrak{G}_{\tau}^{(\infty)}$,
and hence there may exist singular strata of lower dimension where
the maximum number of functionally independent scalar differential
invariants is lower than the generic value $N_{m}$.
\begin{rem}
\label{not} A comment on a possible source of misunderstanding is
due. Proposition~\ref{Nr} refers to scalar differential invariants
as functions on the jet space $J^{\infty}\tau$. If such a function,
say $F$, is evaluated for a particular metric $\mathbf{g}$, then
it becomes a function on the orbit space $\mathcal{S}$, which we shall denote as
$F|_{\mathbf{g}}$ (formally $F|_{\mathbf{g}}=F\circ j^{\infty}\sigma_{\mathbf{g}}$,
where $j^{\infty}$ denotes a jet prolongation of a section of the
bundle $\tau$ and $\sigma_{\mathbf{g}}$ is the section associated
with $\mathbf{g}$). Analogous correspondences hold for other geometric
objects such as forms and vector fields. Hence another interpretation
of scalar differential invariants as functions on $\mathcal{S}$.
Both interpretations are natural and important. For instance, the
order of an invariant can only be seen in the context of jet spaces,
while the most natural way to construct an invariant consists in combining
various invariant geometric constructions on $\mathcal{S}$ \cite{A-V-L}.
It is usually harmless to use one and the same notation with both
interpretations and omit the symbol $|_{\mathbf{g}}$. However, one
should bear in mind that independence of functions on $\mathcal{S}$
is very different from that on $J^{\infty}\tau$. The maximal number
of independent functions is two on $\mathcal{S}$, and unlimited on
$J^{\infty}\tau$.
\end{rem}

\section{Orbit metric\label{subsec:Orbit-metric}}
\label{sec4}

The restriction of $\mathbf{g}$ to the orbits of the Killing algebra
$\mathcal{G}_{2}$, generated by $\xi_{(1)}=\partial_{z^{1}}$, $\xi_{(2)}=\partial_{z^{2}}$,
is described by the $2\times2$ symmetric matrix $H=(h_{ij})$ with
elements 
\[
h_{kl}=\mathbf{g}(\xi_{(k)},\xi_{(l)})=\mathbf{g}_{ab}\xi_{(k)}^{a}\xi_{(l)}^{b}.
\]
In view of assumption (ii) (see Introduction), $\det (h_{ij}) \ne 0$ everywhere, since otherwise 
the restriction
of $\mathbf{g}$ to orbits would be degenerate at some point. 

We found more convenient to rewrite the metric in the form
\begin{equation}
\mbox{\ensuremath{\mathbf{g}}}=\tilde{g}_{ij}\,dt^{i}\,dt^{j}+h_{kl}(dz^{k}+f_{i}^{k}\,dt^{i})(dz^{l}+f_{j}^{l}\,dt^{j}),\label{eq:gG}
\end{equation}
where 
\begin{equation}
\tilde{g}_{ij}=b_{ij}-f_{ik}f_{jl}h^{kl},\qquad f_{j}^{k}=f_{js}h^{sk},\label{eq:gg-gG}
\end{equation}
and $h^{kl}$ denote the elements of the inverse matrix $H^{-1}$. 
Notice that relations (\ref{eq:gg-gG}) directly connect components of (\ref{gg}) to those of (\ref{eq:gG}). 

In terms of variables $\tilde{g}_{ij},\,f_{i}^{k},h_{kl}$, expression 
(\ref{gener_U_tau}) for $U^{\tau}$ simplifies to
\begin{equation}
\begin{array}{l}
\displaystyle 
U^{\tau} 
= \Phi^{i} \frac{\partial}{\partial t^{i}}
+ (A_{l}^{k}z^{l} + \Psi^{k}) \frac{\partial}{\partial z^{k}}
- \left(\tilde g_{is} \frac{\partial\Phi^{s}}{\partial t^{j}}
      + \tilde g_{js} \frac{\partial\Phi^{s}}{\partial t^{i}}\right)
  \frac{\partial}{\partial \tilde g_{ij}}
\vspace{8pt}\\
\qquad\displaystyle 
+ \left(f_{i}^{s} A_{s}^{k}
     - \frac{\partial\Psi^{k}}{\partial t^{i}}
     - f_{s}^{k} \frac{\partial\Phi^{s}}{\partial t^{i}}
     \right) \frac{\partial}{\partial f_{i}^{k}}
- (A_{l}^{s} h_{ks} + A_{k}^{s} h_{sl})
 \frac{\partial}{\partial h_{kl}}.\vspace{8pt}
\end{array}\label{gener_U_tau new}
\end{equation}

An important advantage of (\ref{eq:gG}) is that
$\tilde{\mathbf{g}}=\tilde{g}_{ij}\,dt^{i}\,dt^{j}$ defines a natural
metric on the orbit space $\mathcal{S}$ such that
\[
\tilde{\mathbf{g}}(X,Y)=\mathbf{g}(X,Y)-h^{kl}\mathbf{g}(\xi_{(k)},X)\mathbf{g}(\xi_{(l)},Y),
\]
for any pair of vector fields $X,Y$ on $\mathcal{S}$.  

\begin{prop}[Geroch~\cite{Ger1}]
The $(0,2)$-tensor field $\tilde{\mathbf{g}}$ defines a metric tensor
on the orbit space $\mathcal{S}=\mathcal{M}/G_2$.
\end{prop}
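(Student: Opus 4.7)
The plan is to verify that $\tilde{\mathbf{g}}$, viewed first as a symmetric $(0,2)$-tensor on $\mathcal{M}$, is projectable along $\pi:\mathcal{M}\to\mathcal{S}$ and that the projected tensor is non-degenerate. Symmetry is immediate from the defining formula, so the work reduces to descent and non-degeneracy.

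For the descent I would invoke the standard criterion: a tensor on $\mathcal{M}$ is the pullback of a tensor on $\mathcal{S}$ iff it is horizontal (its contraction with every $\xi_{(m)}$ vanishes) and invariant (its Lie derivative along every $\xi_{(m)}$ vanishes). Horizontality is a one-line computation using $h^{kl}h_{km}=\delta^l_m$:
\[
\tilde{\mathbf{g}}(\xi_{(m)},Y) = \mathbf{g}(\xi_{(m)},Y) - h^{kl}h_{km}\mathbf{g}(\xi_{(l)},Y) = \mathbf{g}(\xi_{(m)},Y) - \delta^l_m\,\mathbf{g}(\xi_{(l)},Y) = 0.
\]
For invariance, I would introduce the 1-forms $\omega_{(k)}:=\mathbf{g}(\xi_{(k)},\cdot)$ and rewrite $\tilde{\mathbf{g}}=\mathbf{g}-h^{kl}\omega_{(k)}\otimes\omega_{(l)}$ (symmetrised). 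The Killing property gives $\mathcal{L}_{\xi_{(m)}}\mathbf{g}=0$; combined with $[\xi_{(m)},\xi_{(k)}]=0$ this forces $\mathcal{L}_{\xi_{(m)}}\omega_{(k)}=0$, and hence $\mathcal{L}_{\xi_{(m)}}h_{kl}=0$ and $\mathcal{L}_{\xi_{(m)}}h^{kl}=0$. Thus every building block of $\tilde{\mathbf{g}}$ is $\mathcal{G}_2$-invariant, so $\mathcal{L}_{\xi_{(m)}}\tilde{\mathbf{g}}=0$.

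For non-degeneracy I would exploit the pointwise splitting $T\mathcal{M}=\Xi\oplus\Xi^\perp$, which exists because the hypothesis that the Killing leaves are non-null means $\mathbf{g}|_\Xi$ is non-degenerate (equivalently $\det(h_{ij})\neq0$). The projection $\pi_*$ restricts to an isomorphism $\Xi^\perp_p\to T_{\pi(p)}\mathcal{S}$. For $X\in\Xi^\perp$ one has $\mathbf{g}(\xi_{(k)},X)=0$, so the correction term in $\tilde{\mathbf{g}}(X,Y)$ drops out and $\tilde{\mathbf{g}}|_{\Xi^\perp\times\Xi^\perp}=\mathbf{g}|_{\Xi^\perp\times\Xi^\perp}$; the latter is non-degenerate because, whenever $\Xi$ is a non-degenerate subspace of a non-degenerate $(T\mathcal{M},\mathbf{g})$, so is its orthogonal complement. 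Transferring through $\pi_*$ yields non-degeneracy of the descended tensor on $\mathcal{S}$.

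The only mildly subtle point I anticipate is justifying rigorously that the value of $\tilde{\mathbf{g}}(X,Y)$ at $p$ depends only on $\pi_*X_p$ and $\pi_*Y_p$: this is handled by decomposing any $X,Y$ into horizontal and vertical components and applying horizontality to kill the vertical parts, while invariance guarantees that the horizontal remainder gives a fibrewise constant value. There is no substantive obstacle; once the non-degeneracy of $\mathbf{g}|_\Xi$ is assumed, the entire argument is mechanical, and the result is precisely Geroch's construction.
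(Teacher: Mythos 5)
Your proof is correct, and it diverges from the paper's in two places worth noting. For the descent, the paper works entirely in adapted coordinates: invariance is read off from the fact that the components $\tilde g_{ij}$ depend only on $(t^1,t^2)$, and horizontality is the same contraction $\tilde{\mathbf{g}}(\xi_{(i)},-)=0$ that you compute; your coordinate-free version via $\mathcal{L}_{\xi_{(m)}}\mathbf{g}=0$ and $[\xi_{(m)},\xi_{(k)}]=0$ proves the same thing without invoking the adapted chart, which is slightly more work but makes explicit where the Killing hypothesis enters. The genuinely different step is non-degeneracy: the paper verifies the determinant identity $\det\tilde{\mathbf g}=\det\mathbf{g}/\det\mathbf{h}$ (the Schur-complement formula, since $\tilde g_{ij}=b_{ij}-f_{ik}f_{jl}h^{kl}$) and concludes directly, whereas you use the splitting $T\mathcal{M}=\Xi\oplus\Xi^\perp$, the observation that $\tilde{\mathbf{g}}$ and $\mathbf{g}$ agree on $\Xi^\perp$, and the linear-algebra fact that the orthogonal complement of a non-degenerate subspace is non-degenerate. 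Both are valid; the paper's identity is a one-line computation that also gets reused later (e.g.\ in the coordinate formulas for the invariants), while your argument is computation-free and exhibits the geometric reason for non-degeneracy, namely that $\pi_*$ identifies $(\Xi^\perp,\mathbf{g})$ isometrically with $(T\mathcal{S},\tilde{\mathbf{g}})$ --- which is precisely the Riemannian-submersion property the paper exploits in the following sections.
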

\begin{proof}
The components of $\tilde{\mathbf{g}}$ only depend on $(t^{1},t^{2})$
and, since $\tilde{\mathbf{g}}(\xi_{(i)},\text{--})=0$ for any $i=1,2$,
we have $\tilde{\mathbf{g}}(X,\text{--})=0$ for every vector
field $X\in\Xi$. It follows that $\tilde{\mathbf{g}}$ is
a well-defined $(0,2)$ tensor on the two-dimensional orbit space
$\mathcal{S}=\mathcal{M}/G_{2}$, and 
\[
\tilde{g}_{ij}=b_{ij}-f_{ik}f_{jl}h^{kl},\quad i,j=1,\dots,2.
\]
Moreover, it is easily checked that 
\[
\det\tilde{\mathbf g} = \frac{\det\mathbf{g}}{\det \mathbf h}.
\]
Hence, $\tilde{\mathbf{g}}$ is nondegenerate and defines a metric
on $\mathcal{S}$.
\end{proof}
Another reason why we prefer (\ref{eq:gG}) to  (\ref{gg}) is that 
explicit expressions of differential invariants of 
$\mathbf{g}$ are relatively simple in terms of $\tilde{g}_{ij},\,f_{i}^{k},h_{kl}$,
whereas they swell in $b_{ij},\,f_{ik},h_{kl}$.

\section{First-order invariants}
\label{sec5}

According to Proposition \ref{Nr}, on $J^{1}\tau$ there are at most
$4$ functionally independent scalar invariants in the orthogonal
transitive case (when $\Xi^{\perp}$ is integrable), and at most $6$ 
functionally independent invariants in the orthogonally intransitive case
(when $\Xi^{\perp}$ is not integrable).
Such a maximal system of functionally independent invariants generates
the whole algebra of differential invariants of the first order, since
any first-order scalar differential invariant must be a function of
them. In this section we  provide an explicit construction of
a maximal system of $6$ functionally independent scalar invariants
for the orthogonally intransitive case, which extends the already 
known~\cite{M-S} maximal system
$I_{1}=C_{\rho}$, $I_{2}=C_{\chi}$, $I_{3}=Q_{\chi}$,
$I_{4}=Q_{\gamma}$ of invariants for the orthogonally transitive
case.
As a matter of fact, we obtain and explore mutual dependence of a number 
of additional first-order invariants which vanish when $\Xi^{\perp}$ is integrable,. 



\subsection{\label{subsec:Invariants C and Q}Scalar invariants {\normalsize{}$C_{\rho}$,
$C_{\chi}$, $Q_{\chi}$, $Q_{\gamma}$} and the semi-invariant orthogonal 
frame $\{ \mathcal{X}, \mathcal{X}^\perp \}$ on $\mathcal S$}

The first-order invariants presented in this subsection essentially coincide with those 
of~\cite{M-S} except that the metric coefficients $g_{ij}$ of~\cite{M-S} have been replaced with the
coefficients of the orbit space metric $\tilde{\mathbf g}_{ij}$.

\begin{lem}
\label{lem:sigma}For any metric $\mathbf{g}$ of the form {\rm(\ref{eq:gG}),}
the pseudogroup action leaves invariant the $1$-form 
\[
\sigma = d \ln (\det\mathbf h) = \frac{d(\det\mathbf h)}{\det\mathbf h}
\]
 and the symmetric $(0,2)$-tensors
\[
\qquad\rho=\sigma^{2},\qquad\chi=\frac{1}{(\det\mathbf h)}\left(dh_{11}\,dh_{22}-dh_{12}\,dh_{12}\right),\qquad\gamma=\chi-\tfrac{1}{4}\rho.
\]
\end{lem}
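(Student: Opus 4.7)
The plan is to exploit the fact that under the finite pseudogroup transformations of Proposition~\ref{prop:pseudogroup}, the block $h_{kl}$ transforms in a particularly simple way: by formula~\eqref{cond_equiv}, $h_{kl}(t)=\alpha_k^{r}\alpha_l^{s}\bar h_{rs}(\phi(t))$ with the matrix $\alpha=(\alpha^{i}_{j})$ \emph{constant}. This constancy is the whole point and will make the argument almost tautological. I will first reduce to proving invariance of $\sigma$ and $\chi$ only, since $\rho=\sigma\otimes\sigma$ and $\gamma=\chi-\tfrac14\rho$ depend on these two objects through functorial algebraic operations (tensor product and linear combination) that automatically preserve invariance.

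For $\sigma$, I would take the determinant of the transformation law $H=\alpha^{T}\bar H\,\alpha$, getting $\det H=(\det\alpha)^{2}\,\det\bar H$ (pulled back along $\phi$). Because $\det\alpha\in\mathbb{R}$ is locally constant on each connected component $\mathfrak{G}_{\epsilon_{1},\epsilon_{2}}$, taking $d\ln$ kills the $(\det\alpha)^{2}$ factor and yields $\sigma=P^{*}\bar\sigma$. (Infinitesimally, the same fact reads $U^{\tau}(\ln\det H)=-2\,\mathrm{tr}\,A$, a constant, so $d\,U^{\tau}(\ln\det H)=0$, confirming $L_{U^{\tau}}\sigma=0$.)

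For $\chi$, the key observation is to view the numerator $dh_{11}\,dh_{22}-dh_{12}\,dh_{12}$ as the ``symmetric determinant'' of the symmetric matrix of $1$-forms $dH=(dh_{kl})$, where products are symmetric products of $1$-forms. Differentiating the transformation law gives $dh_{kl}=\alpha_{k}^{r}\alpha_{l}^{s}\,P^{*}(d\bar h_{rs})$, i.e., $dH=\alpha^{T}P^{*}(d\bar H)\alpha$ with $\alpha$ still constant. The technical lemma I need is that for any symmetric $2\times2$ matrix $M$ of $1$-forms and constant invertible $\alpha$, the symmetric determinant satisfies
\[
\mathop{\mathrm{det}}\nolimits_{\!\odot}(\alpha^{T}M\alpha)=(\mathop{\mathrm{det}}\alpha)^{2}\,\mathop{\mathrm{det}}\nolimits_{\!\odot}M.
\]
This is the only nontrivial step and the main obstacle: the computation must be performed using commutativity of the symmetric product, and I expect the cross-terms in the expansion to cancel exactly as in the ordinary determinant case (a short direct calculation with the four coefficients of $\alpha$ will show that each monomial matches the scalar formula). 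Once granted, dividing by $\det H=(\det\alpha)^{2}\det\bar H$ cancels the $(\det\alpha)^{2}$, yielding $\chi=P^{*}\bar\chi$.

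Finally, combining these two facts, $\rho=\sigma\otimes\sigma$ is invariant because $\sigma$ is, and $\gamma=\chi-\tfrac14\rho$ is invariant as an $\mathbb{R}$-linear combination of invariants. All four objects depend only on $(t^{1},t^{2})$ and descend to the orbit space $\mathcal{S}$, consistently with the interpretation of Section~\ref{subsec:Orbit-metric}.
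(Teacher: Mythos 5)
Your proposal is correct and follows essentially the same route as the paper's proof: both rest on the fact that $h_{kl}=\alpha_k^r\alpha_l^s\bar h_{rs}$ with $\alpha$ constant, so that $\det\mathbf h$ and the numerator $dh_{11}\,dh_{22}-dh_{12}\,dh_{12}$ each pick up the same constant factor $(\det\alpha)^{2}$, which cancels in $\sigma$ and $\chi$ (and hence in $\rho$ and $\gamma$). The ``symmetric determinant'' identity you flag as the only nontrivial step is immediate, since symmetric products of $1$-forms commute and the identity $\det(\alpha^{T}M\alpha)=(\det\alpha)^{2}\det M$ is a polynomial identity over any commutative ring.
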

\begin{proof} 
Under pseudogroup transformations of $\mathfrak{G}_{\tau}$, $\det\mathbf h$
transforms as $\det\mathbf h\mapsto (\det\mathbf h)/(\mathop{\mathrm{det}}\alpha_{j}^{i})^{2}$,
with $(\alpha_{j}^{i})\in \mathrm{GL}(2, \mathbb{R})$. Therefore,
the $1$-form $\sigma={d(\det\mathbf h)}/{\det\mathbf h}$ is $\mathfrak{G}_{\tau}^{(1)}$-invariant.
The invariance of $\chi$ follows from the transformation
rule $\left(dh_{11}\,dh_{22}-dh_{12}\,dh_{12}\right)$ $\mapsto\left(dh_{11}\,dh_{22}-dh_{12}\,dh_{12}\right)/(\mathop{\mathrm{det}}\alpha_{j}^{i})^{2}$ under the pseudogroup
transformations of $\mathfrak{G}_{\tau}$. 
\end{proof}
\begin{rem}
We shall call $\gamma$ the \textit{Cosgrove form}, since it was 
introduced in the paper \cite[Eq.~(2.3)]{C1} in the orthogonally transitive case.
\end{rem}

An easy construction of the first-order scalar differential invariants follows from the 
consideration of the determinant $Q_{\mu}$ and the trace $C_{\mu}$ of the self-adjoint 
$(1,1)$-tensor field related to a symmetric bilinear form $\mu$ on $\mathcal{S}$.
In coordinates,
if $\mu=\mu_{ij}\,dt^{i}\,dt^{j}$, then the corresponding $(1,1)$-tensor field 
has the components $\mu_{i}^{j} = \mu_{is}\tilde{\mathbf g}^{sj}$, and 
\begin{equation}
Q_{\mu}=\frac{\mathop{\mathrm{det}}\mu_{ij}}{\det\tilde{\mathbf g}},\qquad C_{\mu}=\mu_{ij}\tilde{g}^{ij}.\label{eq:inv_C_Q}
\end{equation}

Choosing $\mu = \rho,\chi,\gamma$, we get four independent invariants
$C_{\rho},C_{\chi},Q_{\chi},Q_\gamma$, whereas $Q_\rho = 0$ and $C_{\gamma}=C_{\chi}-\frac{1}{4}C_{\rho}$.

Geometric meaning of $C_\chi$ is given in Proposition~\ref{GaussCurv}.

\begin{prop}
\label{prop:first_inv}
The functions $C_{\rho},C_{\chi},Q_{\chi}$ and 
$Q_{\gamma}$ are generically functionally independent first-order differential
invariants.
\end{prop}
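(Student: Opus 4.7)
I would prove the three claims in the proposition separately: $\mathfrak{G}_\tau^{(1)}$-invariance, order at most one, and generic functional independence.

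Invariance and order are essentially free. Lemma~\ref{lem:sigma} already provides $\rho$, $\chi$, $\gamma$ as $\mathfrak{G}_\tau^{(1)}$-invariant symmetric $(0,2)$-tensors on $\mathcal S$. The Geroch metric $\tilde{\mathbf g}$ is an invariant metric on $\mathcal S$: inspecting the $\partial/\partial\tilde g_{ij}$-component of the infinitesimal generator~\eqref{gener_U_tau new}, it transforms as the standard $(0,2)$-tensor under $t^i \mapsto \phi^i(t)$ and is fixed by the $A$- and $\Psi$-pieces. Consequently, the trace $C_\mu = \mu_{ij}\tilde g^{ij}$ and the determinantal quotient $Q_\mu = \det\mu_{ij}/\det\tilde g_{ij}$ of any invariant $(0,2)$-tensor $\mu$ taken with $\tilde{\mathbf g}$ are scalar invariants, and for $\mu \in \{\rho, \chi, \gamma\}$ they are of order at most one, since only $h_{ij}$ and $\partial_s h_{ij}$ enter.

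For functional independence I would reduce to a finite-dimensional Jacobian computation. After fixing arbitrary zero-order data $\tilde g_{ij}, h_{ij}, f_i^k$ with $\det\tilde g, \det h \neq 0$, the four invariants become polynomials of respective degrees $2, 2, 4, 4$ in the six first-order variables $h_{ij,s}$. Functional independence on $J^1\tau$ is equivalent to the $4\times 6$ Jacobian
\begin{equation*}
J \;=\; \frac{\partial(C_\rho, C_\chi, Q_\chi, Q_\gamma)}{\partial(h_{11,1}, h_{12,1}, h_{22,1}, h_{11,2}, h_{12,2}, h_{22,2})}
\end{equation*}
having rank $4$ at one point, since rank is lower semicontinuous and so the rank-$4$ locus is open; non-emptiness then yields the generic open subset. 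A concrete test point such as $\tilde g_{ij} = \delta_{ij}$, $h_{ij} = \delta_{ij}$, together with a deliberately non-symmetric first jet, should suffice to exhibit a non-vanishing $4\times 4$ minor of $J$ by direct evaluation from the formulas in Lemma~\ref{lem:sigma}.

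The main obstacle is controlling the interaction between $Q_\chi$ and $Q_\gamma$. Since $\det\rho = 0$, the two-dimensional identity $\det(A + tB) = \det A + t[\tr A\,\tr B - \tr(AB)] + t^2\det B$ applied to the $(1,1)$-tensors associated with $\chi$ and $\rho$ yields
\begin{equation*}
Q_\gamma \;=\; Q_\chi \;-\; \tfrac14\, C_\rho C_\chi \;+\; \tfrac14\,\tr_{\tilde g}(\rho\chi),
\end{equation*}
so the fourth invariant really encodes the \emph{mixed trace} $\tr_{\tilde g}(\rho\chi)$ on top of the other three. The test point must therefore be chosen so that this mixed trace is not functionally dependent on $C_\rho, C_\chi, Q_\chi$---equivalently, so that the $\tilde g$-eigendirections of $\rho$ and $\chi$ do not coincide---and the explicit evaluation of $J$ then confirms rank $4$. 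The required genericity condition is open and non-empty, so the conclusion follows.
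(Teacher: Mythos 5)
Your proposal is correct and follows essentially the same route as the paper: invariance is read off from Lemma~\ref{lem:sigma} together with the tensorial transformation of $\tilde{\mathbf g}$, and functional independence is reduced to checking that the Jacobian with respect to the first-order jet coordinates $h_{ij,s}$ has rank $4$ at a single numeric test point, which by semicontinuity of rank gives genericity. Your extra identity $Q_\gamma = Q_\chi - \tfrac14 C_\rho C_\chi + \tfrac14\tr_{\tilde g}(\rho\chi)$ (valid since $Q_\rho=0$) is a useful sanity check on the choice of test point but does not change the argument.
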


\begin{proof}
Invariance follows from Lemma~\ref{lem:sigma}.
Functional independence follows from the fact that the rank of the Jacobian at a generic point 
of the jet space is equal to 4. 
Obviously, the last condition is easily checked by computing the rank of a numeric matrix.
\end{proof}

In coordinates, we have
\begin{enumerate}
\item $C_{\rho} 
 = {\displaystyle \frac{1}{(\det\mathbf h)^{2}}}(\det\mathbf h)_{,i}(\det\mathbf h)_{,j}\,\tilde{g}^{ij}$;
\item $C_{\chi}={\displaystyle {\displaystyle \frac{1}{(\det\mathbf h)}}}\,\tilde{g}^{ij}\left|\begin{array}{@{}cc@{}}
h_{11,i} & h_{12,j}\\
h_{12,i} & h_{22,j}
\end{array}\right|$;
\item $Q_{\chi}={\displaystyle \frac{\mathop{\mathrm{det}} \chi_{ij}}{\det\tilde{\mathbf g}}},$
\quad $\chi_{ij}={\displaystyle \frac{1}{2\,\det\mathbf h}}\left|\begin{array}{@{}cc@{}}
h_{11,i} & h_{12,j}\\
h_{21,i} & h_{22,j}
\end{array}\right|+{\displaystyle \frac{1}{2\,\det\mathbf h}}\left|\begin{array}{@{}cc@{}}
h_{11,j} & h_{12,i}\\
h_{21,j} & h_{22,i}
\end{array}\right|$;
\item $Q_{\gamma}={\displaystyle \frac{\mathop{\mathrm{det}}\gamma_{ij}}{\det\tilde{\mathbf g}}}={\displaystyle \frac{1}{4(\det\mathbf h)^{3}\det\tilde{\mathbf g}}}\left|\begin{array}{@{}ccc@{}}
h_{11} & h_{12} & h_{22}\\
h_{11,1} & h_{12,1} & h_{22,1}\\
h_{11,2} & h_{12,2} & h_{22,2}
\end{array}\right|^{2}$.
\end{enumerate}
Here comma denotes partial differentiation.


Following~\cite{M-S} again, we complete this section with a construction of two invariant first-order 
vector fields on $\mathcal S$.
In the jet space description, the $1$-form $\sigma$ 
is defined on $J^{1}\tau$
and horizontal with respect to $\text{\ensuremath{\pi}}_{1}:=\pi\circ\tau_{1}$.
We  denote by $\mathcal{X}$ and $\mathcal{X^\bot}$
the $\pi_{1}$-relative vector fields on $\mathcal{S}$ such that
$\sigma=\tilde{\mathbf{g}}(\mathcal{X},\text{--})$ and 
$\sigma=\mathcal{X}^\bot\mathop\lrcorner\mathop{\mathrm{vol}}_{\tilde{\mathbf{g}}}$,
respectively. 
Here 
$$
\mathop{\mathrm{vol}}\nolimits_{\tilde{\mathbf{g}}}
 =\sqrt{\left|\det \tilde{\mathbf{g}}\right|}\, dt^1 \wedge dt^2
$$
is the volume form of $(\mathcal S, \tilde{\mathbf{g}})$.

\begin{lem} \label{lem:XXbot}
Under the pseudogroup action, for any metric $\mathbf{g}$ of the form {\rm(\ref{eq:gG})},  the vector field
\begin{equation}
\mathcal{X}
 = \tilde{\mathbf{g}}^{is} \frac{(\det\mathbf h)_{,s}}{\det\mathbf h} \partial_{t^{i}}, \label{eq:X}
\end{equation}
is invariant, whereas the vector field
\begin{equation}
\mathcal{X}^\bot = \frac{(\det\mathbf h)_{,2}}{(\det\mathbf h)\sqrt{\left|\det\tilde{\mathbf g}\right|}} 
  \partial_{t^{1}}-\frac{(\det\mathbf h)_{,1}}{(\det\mathbf h)\sqrt{\left|\det\tilde{\mathbf g}\right|}} 
  \partial_{t^{2}}
  \label{eq:Xbot}
\end{equation}
 transforms as $\mathcal{X}^\bot  \mapsto \mathop{\mathrm{sgn}}(J_{\phi})\mathcal{X}^\bot$. Moreover
\begin{equation}
\label{eq:XXbot orthogonality}
\tilde{\mathbf{g}}(\mathcal{X},\mathcal{X}) = C_\rho, \quad
\tilde{\mathbf{g}}(\mathcal{X},\mathcal{X}^\bot) = 0, \quad
\tilde{\mathbf{g}}(\mathcal{X}^\bot,\mathcal{X}^\bot) =\pm_{\tilde{\mathbf{g}}} C_\rho.
\end{equation}
where $\pm_{\tilde{\mathbf{g}}} = \mathop{\mathrm{sgn}} \det\tilde{\mathbf{g}}$. Hence  $\{ \mathcal{X}, \mathcal{X}^\perp \}$ is a semi-invariant orthogonal  frame  on $\mathcal S$, when $C_{\rho}\neq 0$.

\end{lem}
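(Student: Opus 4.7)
The plan is to reduce the entire lemma to the already-established invariance of the Geroch orbit metric $\tilde{\mathbf{g}}$ and of the 1-form $\sigma=d\ln(\det\mathbf h)$ from Lemma~\ref{lem:sigma}. Given those, the definitions of $\mathcal X$ and $\mathcal X^\bot$ are two different ways of raising an index on $\sigma$: once through the metric, once through the volume form. The parity of $\mathop{\mathrm{sgn}}(J_\phi)$ enters only through the volume form, because $\sqrt{|\det\tilde{\mathbf g}|}\,dt^1\wedge dt^2$ is a pseudo-form with respect to orientation-reversing $\mathfrak G$-transformations.

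First I would verify that $\tilde{\mathbf{g}}^{-1}$ is $\mathfrak G_\tau^{(0)}$-invariant. This is immediate from the simplified generator \eqref{gener_U_tau new}, which shows that $\tilde g_{ij}$ transforms exactly as the components of a (0,2)-tensor under the $(t^1,t^2)$-reparametrizations $\phi^i$, independently of the $A_l^k,\Psi^k$ pieces; equivalently, it was already proved that $\tilde{\mathbf{g}}$ descends to a metric on $\mathcal S$. Since $\mathcal X$ is defined by $\sigma=\tilde{\mathbf g}(\mathcal X,-)$, that is $\mathcal X^i=\tilde g^{is}\sigma_s$, invariance of $\sigma$ and of $\tilde{\mathbf g}^{-1}$ yields invariance of $\mathcal X$; the coordinate formula \eqref{eq:X} then just expresses $\sigma_s=(\det\mathbf h)_{,s}/\det\mathbf h$.

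Next I would examine $\mathcal X^\bot$ defined by $\sigma=\mathcal X^\bot\lrcorner\mathop{\mathrm{vol}}\nolimits_{\tilde{\mathbf g}}$. Under a $\mathfrak G$-transformation with Jacobian $J_\phi$, one has $d\bar t^1\wedge d\bar t^2=J_\phi\,dt^1\wedge dt^2$ while $\sqrt{|\det\tilde{\bar{\mathbf{g}}}|}=|J_\phi|^{-1}\sqrt{|\det\tilde{\mathbf g}|}$, so $\mathop{\mathrm{vol}}\nolimits_{\tilde{\mathbf g}}\mapsto \mathop{\mathrm{sgn}}(J_\phi)\mathop{\mathrm{vol}}\nolimits_{\tilde{\mathbf g}}$. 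Since the left-hand side $\sigma$ is invariant, $\mathcal X^\bot$ must absorb the sign, giving $\mathcal X^\bot\mapsto\mathop{\mathrm{sgn}}(J_\phi)\mathcal X^\bot$. To obtain the explicit form \eqref{eq:Xbot}, I would just solve $\sigma=\mathcal X^\bot\lrcorner\bigl(\sqrt{|\det\tilde{\mathbf g}|}\,dt^1\wedge dt^2\bigr)$ componentwise.

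Finally, the three identities in \eqref{eq:XXbot orthogonality} reduce to a short direct calculation. For the first, $\tilde{\mathbf g}(\mathcal X,\mathcal X)=\sigma(\mathcal X)=\tilde g^{ij}\sigma_i\sigma_j$, which is the coordinate expression for $C_\rho$ listed after Proposition~\ref{prop:first_inv}. For the second, $\tilde{\mathbf g}(\mathcal X,\mathcal X^\bot)=\sigma(\mathcal X^\bot)$, and substituting \eqref{eq:Xbot} gives immediate cancellation. For the third, contract $\tilde g_{ij}$ with the components of $\mathcal X^\bot$ and use the 2D identity $\tilde g_{22}, -\tilde g_{12}, \tilde g_{11}$ = $(\det\tilde{\mathbf g})\,(\tilde g^{11},\tilde g^{12},\tilde g^{22})$ to recognise $\tilde g^{ij}\sigma_i\sigma_j=C_\rho$ multiplied by $\det\tilde{\mathbf g}/|\det\tilde{\mathbf g}|=\pm_{\tilde{\mathbf g}}$. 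The only step requiring care is tracking the absolute value inside $\sqrt{|\det\tilde{\mathbf g}|}$, which is what produces the sign $\pm_{\tilde{\mathbf g}}$ in the last identity and thus marks the only semi-Riemannian subtlety; no genuine obstacle arises.
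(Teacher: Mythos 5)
Your proposal is correct and follows essentially the same route as the paper's own proof: invariance of $\mathcal X$ and the sign $\mathop{\mathrm{sgn}}(J_\phi)$ for $\mathcal X^\bot$ are deduced from the invariance of $\sigma$ together with the semi-invariance of $\mathop{\mathrm{vol}}_{\tilde{\mathbf g}}$, and the identities \eqref{eq:XXbot orthogonality} are checked by direct computation in adapted coordinates. You merely spell out the coordinate verifications (including the cofactor identity producing $\pm_{\tilde{\mathbf g}}$) that the paper leaves as routine.
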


\begin{proof}
In view of the invariance of $\sigma$ and the fact that $\mathop{\mathrm{vol}}_{\tilde{\mathbf{g}}}$
is invariant only up to a sign, $\mathcal{X}$ and $\mathcal{X}^\bot$ have the specified invariance 
properties.
Formulas~\eqref{eq:XXbot orthogonality} are routinely checked in adapted coordinates.
\end{proof}



\subsection{The semi-invariant vector field $\mathcal{C}$ and scalar invariant $\ell_{\mathcal{C}}$ }

The mapping $\pi:\mathcal{M}\to\mathcal{S}$ is a Riemannian submersion
\cite[9.12]{Bess}, with respect to metrics $\mathbf{g}$ and $\tilde{\mathbf{g}}$.
Relatively to this submersion, $\Xi$ will be referred to as the \textit{vertical
distribution}, whereas $\Xi^{\perp}$ as the\textit{ horizontal} \textit{distribution}.
Moreover, due to non-degeneracy condition (ii), the tangent bundle
to $\mathcal{M}$ decomposes as $T\mathcal{M}=\Xi\oplus\Xi^{\perp}$,
with $\Xi^{\perp}$ generated by the vector fields
\begin{equation}\label{eq:e}
\mathbf{e}_{j}=\frac{\partial}{\partial t^{j}}-f_{j}^{k}\frac{\partial}{\partial z^{k}},\qquad j=1,2.
\end{equation}
Recall that $f_{j}^{k}=f_{js}h^{sk}$.

In view of this decomposition one has the natural projections
$\mathop{\mathrm{ver}}=\mbox{pr}_{\Xi}:TM\rightarrow\Xi$ and $\mathop{\mathrm{hor}}=\mbox{pr}_{\Xi^{\perp}}:TM\rightarrow\Xi^{\perp}$
such that 
\[
\mathop{\mathrm{ver}}\left(\frac{\partial}{\partial t^{j}}\right)=\mathop{\mathrm{ver}}\left(\frac{\partial}{\partial t^{j}}-f_{j}^{k}\frac{\partial}{\partial z^{k}}+f_{j}^{k}\frac{\partial}{\partial z^{k}}\right)=f_{j}^{k}\frac{\partial}{\partial z^{k}},\qquad\mathop{\mathrm{ver}}\left(\frac{\partial}{\partial z^{k}}\right)=\frac{\partial}{\partial z^{k}},
\]
and 
\[
\mathop{\mathrm{hor}}\left(\frac{\partial}{\partial t^{j}}\right)=\mathop{\mathrm{hor}}\left(\frac{\partial}{\partial t^{j}}-f_{j}^{k}\frac{\partial}{\partial z^{k}}+f_{j}^{k}\frac{\partial}{\partial z^{k}}\right)=\frac{\partial}{\partial t^{j}}-f_{j}^{k}\frac{\partial}{\partial z^{k}},\qquad\mathop{\mathrm{hor}}\left(\frac{\partial}{\partial z^{k}}\right)=0.
\]
In the adapted coordinates the non-vanishing components of $\mathop{\mathrm{ver}}$
and $\mathop{\mathrm{hor}}$ are 
\[
\mathop{\mathrm{ver}}{}_{j}^{k^{*}}=f_{j}^{k},\quad\mathop{\mathrm{ver}}{}_{j^{*}}^{k^{*}}=\delta_{j}^{k},\qquad j,k=1,2,
\]
and 
\[
\mathop{\mathrm{hor}}{}_{j}^{k}=\delta_{j}^{k},\quad\mathop{\mathrm{hor}}{}_{j}^{k^{*}}=-f_{j}^{k},\qquad j,k=1,2,
\]
respectively, where we use the notation $k^{*}=k+2$.
\begin{rem}
Every (relative) vector field $X$ on $\mathcal{S}$ can be uniquely
lifted to an horizontal (relative) vector field $\widehat{X}$ on $\mathcal{M}$
which is $\pi$-related to $X$. In particular every invariant (relative)
vector field on $\mathcal{S}$ can be uniquely lifted to an invariant
(relative) vector field on $\mathcal{M}$. Moreover, the lift preserves the scalar product.
In coordinates, 
$$
\widehat{\frac{\partial}{\partial t^{i}}} = \frac{\partial}{\partial t^{i}}-f_{i}^{k}\frac{\partial}{\partial z^{k}} = \mathbf{e}_i,
\quad i = 1,2. 
$$

\end{rem}


The geometry of the Riemannian submersion $\pi:\mathcal{M}\to\mathcal{S}$
can be described by using the Ehresmann curvature and the O'Neill
tensors, which are naturally defined in terms of $\mathop{\mathrm{ver}}$
and $\mathop{\mathrm{hor}}$. 

The \textit{Ehresmann curvature }is the tensor $\mathbf{c}:\mathcal{D}(\mathcal{M})\otimes\mathcal{D}(\mathcal{M})\to\mathcal{D}(\mathcal{M})$
defined in terms of the Lie bracket by 
\[
\mathbf{\mathbf{c}}(W_{1},W_{2})=\mathop{\mathrm{ver}}\left[\mathop{\mathrm{hor}}\,W_{1},\mathop{\mathrm{hor}}\,W_{2}\right],
\]
for any two vector fields $W_{1},W_{2}\in\mathcal{D}(\mathcal{M})$.
This is an antisymmetric tensor whose nonzero components in adapted
coordinates are 
\begin{equation}
\mathbf{c}_{ij}^{k^{*}}=\partial_{j}f_{i}^{k}-\partial_{i}f_{j}^{k},\label{EqRef: curv c}
\end{equation}
\textcolor{black}{where} $k^{*}=k+2$.
\textcolor{black}{It is easily checked that $\mathbf{c}$ is traceless,
$\mathbf{c}_{ab}^{a}=0$. }Of course, $\mathbf{c}=0$ if and only
if $\Xi^{\perp}$ is involutive. 


The \textit{curvature vector field} \textit{$\mathcal{C}$}
is defined as 
\begin{equation}
\mathcal{C}=\frac{\mathbf{c}(\partial_{t_{1}},\partial_{t_{2}})}
  {\sqrt{\left|\det\tilde{\mathbf g}\right|}}. \label{curv}
\end{equation}
This is a semi-invariant vector field, since it transforms
as $\mathcal{C}\mapsto(\mathop{\mathrm{sgn}}J_{\phi})\mathcal{C}$ 
under pseudogroup transformations (\ref{eq:psgroup_fin}).
Indeed, the numerator and denominator of (\ref{curv}) transform as
$\mathbf{c}(\partial_{t_{1}},\partial_{t_{2}})
 = \mathop{\mathrm{ver}}
  \left[\mathop{\mathrm{hor}}\partial_{t_{1}},
    \mathop{\mathrm{hor}}\partial_{t_{2}}\right]
\mapsto\mathop{\mathrm{ver}} 
  \left[\mathop{\mathrm{hor}}\partial_{t_{1}},
    \mathop{\mathrm{hor}}\partial_{t_{2}}\right]/J_{\phi}$
and $\sqrt{\left|\det\tilde{\mathbf g}\right|}
\mapsto\sqrt{\left|\det\tilde{\mathbf g}\right|}/|J_{\phi}|$,
respectively. 
In coordinates,  
\begin{equation} \label{curv-vect}
\mathcal{C}=\mathcal{C}^{k}\frac\partial{\partial z^{k}},
\quad
\mathcal{C}^{k}=\frac{\partial_{t^{2}}f_{1}^{k}-\partial_{t^{1}}f_{2}^{k}}{\sqrt{\left|\det\tilde{\mathbf g}\right|}},\qquad k=1,2.
\end{equation}
Consider now the scalar invariant $\ell_{\mathcal{C}}=\mathbf{g}(\mathcal{C},\mathcal{C})$, i.e.,  
the squared length of $\mathcal{C}$.
Obviously from the coordinate formulas, $\ell_{\mathcal{C}}$ is 
given by a rather simple coordinate formula
$$
\ell_{\mathcal{C}} = \mathbf{g}(\mathcal{C},\mathcal{C})
 = h_{kl} \mathcal{C}^{k}\mathcal{C}^{l}
 = \frac{h_{kl} (\partial_{t^{2}}f_{1}^{k}-\partial_{t^{1}}f_{2}^{k})
                (\partial_{t^{2}}f_{1}^{l}-\partial_{t^{1}}f_{2}^{l})}
        {\left|\det\tilde{\mathbf g} \right|}.
$$

In the generic case, $\ell_{\mathcal{C}}$ is
functionally independent from the previous four invariants 
$C_{\rho},C_{\chi},C_{\gamma},Q_{\chi}$.
Consequently, $\ell_{\mathcal{C}}$ is the fifth scalar invariant sought.
Summarizing, we have the following, proposition.

\begin{lem}
For any metric $\mathbf{g}$ of the form {\rm(\ref{eq:gG})}, the curvature vector field $\mathcal{C}$ 
transforms as $\mathcal{C}\mapsto(\mathop{\mathrm{sgn}}J_{\phi})\mathcal{C}$ under the pseudogroup 
action~\eqref{eq:psgroup_fin}.
Therefore, $\ell_{\mathcal{C}} = \mathbf{g}(\mathcal{C},\mathcal{C})$ is a scalar differential invariant.
\end{lem}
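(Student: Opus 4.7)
The approach is to track how the numerator and denominator of the defining expression~\eqref{curv} for $\mathcal{C}$ transform under a generic $\mathfrak{G}$-transformation~\eqref{eq:psgroup_fin}, then combine the two transformation laws and square to obtain the invariance of $\ell_{\mathcal{C}}$. The key conceptual observation is that the projections $\mathop{\mathrm{hor}}$ and $\mathop{\mathrm{ver}}$ are determined solely by the intrinsic decomposition $T\mathcal{M} = \Xi\oplus\Xi^\perp$, so the Ehresmann curvature $\mathbf{c}$ is a coordinate-independent antisymmetric $(1,2)$-tensor on $\mathcal{M}$ that vanishes whenever any of its two arguments is vertical, while $\tilde{\mathbf g}$ is a coordinate-independent $(0,2)$-tensor on the orbit space $\mathcal{S}$.

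For the numerator, I decompose
\[
\partial_{\bar t^i} \;=\; \frac{\partial t^j}{\partial \bar t^i}\,\partial_{t^j} \;+\; B^k_i\,\partial_{z^k},
\]
where the vertical component $B^k_i\,\partial_{z^k}$ originates from the $t$-dependence of $\psi^k$ in~\eqref{eq:psgroup_fin}. Using vanishing of $\mathbf{c}$ on vertical arguments together with bilinearity and antisymmetry,
\[
\mathbf{c}(\partial_{\bar t^1},\partial_{\bar t^2})
 = \frac{\partial t^a}{\partial \bar t^1}\,\frac{\partial t^b}{\partial \bar t^2}\,\mathbf{c}(\partial_{t^a},\partial_{t^b})
 = \det\!\left(\frac{\partial t^j}{\partial \bar t^i}\right)\mathbf{c}(\partial_{t^1},\partial_{t^2})
 = \frac{1}{J_\phi}\,\mathbf{c}(\partial_{t^1},\partial_{t^2}).
\]
For the denominator, only the $\phi$-part of the pseudogroup acts on $\mathcal{S}$, so the ordinary tensorial transformation rule for $\tilde{\mathbf g}$ yields $\det\bar{\tilde{\mathbf g}} = \det\tilde{\mathbf g}/J_\phi^2$, whence $\sqrt{|\det\bar{\tilde{\mathbf g}}|} = \sqrt{|\det\tilde{\mathbf g}|}/|J_\phi|$. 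Dividing the two transformation laws gives
\[
\bar{\mathcal{C}} = \frac{\mathbf{c}(\partial_{\bar t^1},\partial_{\bar t^2})}{\sqrt{|\det\bar{\tilde{\mathbf g}}|}} = \frac{|J_\phi|}{J_\phi}\,\mathcal{C} = \mathop{\mathrm{sgn}}(J_\phi)\,\mathcal{C},
\]
which is the asserted semi-invariance.

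Finally, $\mathbf g$ is itself a $\mathfrak{G}_\tau$-invariant $(0,2)$-tensor on $\mathcal{M}$, so $\ell_{\mathcal C} = \mathbf g(\mathcal{C},\mathcal{C})$ picks up the factor $(\mathop{\mathrm{sgn}} J_\phi)^2 = 1$ and is therefore a genuine scalar first-order differential invariant. The only mildly subtle step is the antisymmetry argument that erases the vertical contamination of $\partial_{\bar t^i}$; from there the argument is pure bookkeeping with Jacobians.
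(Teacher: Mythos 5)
Your proposal is correct and follows essentially the same route as the paper: the paper likewise observes that the numerator $\mathbf{c}(\partial_{t^1},\partial_{t^2})$ picks up a factor $1/J_\phi$ and the denominator $\sqrt{|\det\tilde{\mathbf g}|}$ a factor $1/|J_\phi|$, so that $\mathcal{C}\mapsto\mathop{\mathrm{sgn}}(J_\phi)\,\mathcal{C}$ and hence $\ell_{\mathcal C}$ is invariant. You merely fill in the tensoriality and vertical-vanishing argument for $\mathbf{c}$ that the paper leaves implicit.
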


We say that a metric belongs to the {\it Kundu class} when $\ell_{\mathcal{C}} \equiv 0$,
i.e., when $\mathcal{C}$ is null.
Vacuum Einstein metrics in this class have been studied by Kundu~\cite{Kun}. 


\subsection{O'Neill tensors $\mathbf A$ and $\mathbf T$. The invariant and semi-invariant vector fields $\mathcal{H}$ and $\mathcal{H}^\perp$ in the case when $C_\rho \neq 0$}

To construct further invariants, 
we  introduce also a semi-invariant orthogonal frame on $\Xi^\bot$ by employing the \textit{O'Neill tensors}  $\mathbf{A}$ and $\mathbf{T}$~\cite{ON,Bess}. These tensors are defined by 
\[
\mathbf{A}(W_{1},W_{2})=\mathbf{O}(W_{1},W_{2})+\mathbf{E}(W_{1},W_{2}),
\]
\[
\mathbf{T}(W_{1},W_{2})=\mathbf{N}(W_{1},W_{2})+\mathbf{L}(W_{1},W_{2}),
\]
where
\[
\mathbf{O}(W_{1},W_{2})=\mathop{\mathrm{ver}}\left(\nabla_{\mathop{\mathrm{hor}}\,W_{1}}\mathop{\mathrm{hor}}\,W_{2}\right),\quad\mathbf{E}(W_{1},W_{2})=\mathop{\mathrm{hor}}\left(\nabla_{\mathop{\mathrm{hor}}\,W_{1}}\mathop{\mathrm{ver}}\,W_{2}\right),
\]
\[
\mathbf{N}(W_{1},W_{2})=\mathop{\mathrm{ver}}\left(\nabla_{\mathop{\mathrm{ver}}\,W_{1}}\mathop{\mathrm{hor}}\,W_{2}\right),\quad\mathbf{L}(W_{1},W_{2})=\mathop{\mathrm{hor}}\left(\nabla_{\mathop{\mathrm{ver}}\,W_{1}}\mathop{\mathrm{ver}}\,W_{2}\right),
\]
for arbitrary vector fields $W_{1},W_{2}$ on $\mathcal{M}$.

As is well known, see \cite[\S 9.24]{Bess}, 
\[
\mathbf{A}\left(\mathop{\mathrm{hor}}\,W_{1},\mathop{\mathrm{hor}}\,W_{2}\right)=\tfrac{1}{2}\mathbf{c}(W_{1},W_{2}),
\]
while $\mathbf{A}\left(\mathop{\mathrm{hor}}\,W_{1},\mathop{\mathrm{hor}}\,W_{2}\right)=\mathbf{A}\left(W_{1},\mathop{\mathrm{hor}}\,W_{2}\right)=\mathbf{O}(W_{1},W_{2})$,
hence 
\[
\mathbf{O}(W_{1},W_{2})=\mathbf{A}\left(W_{1},\mathop{\mathrm{hor}}\,W_{2}\right)=\tfrac{1}{2}\mathbf{c}(W_{1},W_{2})
\]
meaning that in adapted coordinates components of $\mathbf{O}$ are
simply one half of those of $\mathbf{c}$ given by formulas (\ref{EqRef: curv c}).
\begin{rem}
The second fundamental form of the fibers of the submersion is defined
by $\left.\bar{\mathbf T}\right|_{\Xi}$, the restriction of $T$ to $\Xi$.
Hence, 
$\left.\bar{\mathbf T}\right|_{\Xi}=0$
if and only if the Riemannian submersion has totally geodesic fibers. Moreover,
$\Xi^{\perp}$ is completely integrable iff the restriction of $\mathbf A$
to $\Xi^{\perp}$ identically vanishes. In particular, when $\mathbf A=0$, 
then $\Xi^{\perp}$ is completely integrable.
\end{rem}



To construct a semi-invariant orthogonal frame in $\Xi^\bot$, we consider the \textit{mean-curvature vector field} $\mathcal{H}$,
defined as
\[
\mathcal{H}=\sum_{s=1}^{2} \mathbf{T}(\mathbf{v}_{s},\mathbf{v}_{s}),
\]
for any vertical orthonormal frame $\{\mathbf{v}_{1},\mathbf{v}_{2}\}$.
Obviously, $\mathcal{H}$ is invariant with respect
to the action of $\mathfrak{G}_{\tau}^{(1)}$. 
In adapted coordinates, $\mathcal{H}$ is the contraction 
$\mathcal{H}^a = \mathbf{g}^{kl}\mathbf{T}_{kl}^{a}$. Hence,
\[
\mathcal{H}=\mathcal{H}^{i}\left(\frac{\partial}{\partial t^{i}}-f_{i}^{k}\frac{\partial}{\partial z^{k}}\right), 
\]
where 
\begin{equation}
\mathcal{H}^{i} = -\frac12 \tilde{\mathbf{g}}^{is}\frac{(\det\mathbf h)_{,s}}{\det\mathbf h},
\quad i = 1,2.\label{H12}
\end{equation}
By comparing \eqref{eq:X} and (\ref{H12}), one sees
that $\mathcal{H} \in \Xi^\bot$ is a lifted vector field; 
more precisely, $\mathcal{H} = -\frac12 \hat{\mathcal{X}}$.
The squared length of $\mathcal{H}$ is easily seen to be 
$\ell_{\mathcal{H}} = \mathbf{g}(\mathcal{H},\mathcal{H})
 = \frac14 C_\rho$.
 

In the case when $C_\rho \neq 0$, to complete the sought semi-invariant orthogonal frame, we introduce the orthogonal complement 
$\mathcal{H}^\bot \in \Xi^\bot$ by lifting the vector field $-\frac12 \mathcal{X}^\bot$,
see formula \eqref{eq:Xbot}.
Then, since the lift preserves the scalar product, one has $\mathbf{g}(\mathcal{H},\mathcal{H}^\bot) = 0$ and 
\[\ell_{\mathcal{H}^\bot} = \mathbf{g}(\mathcal{H}^\bot,\mathcal{H}^\bot)
 =\mathbf{g}\left(-\tfrac{1}{2}\mathcal X^\bot,-\tfrac{1}{2}\mathcal X^\bot\right) 
 = \pm_{\tilde{\mathbf{g}}}\,\tfrac{1}{4}\,C_{\rho}=\pm_{\tilde{\mathbf{g}}}\, \ell_{\mathcal{H}}.
\]
In coordinates,
\[
\mathcal{H^\bot}=(\mathcal{H}^{\bot})^{i}\left(\frac{\partial}{\partial t^{i}}-f_{i}^{k}\frac{\partial}
  {\partial z^{k}}\right), 
\]
where
$$
(\mathcal{H}^{\bot})^{1}
 = -\frac12 \frac{(\det\mathbf h)_{,2}}{(\det\mathbf h) \sqrt{\left|\det\tilde{\mathbf g}\right|}},
\quad
(\mathcal{H}^{\bot})^{2}
 = \frac12 \frac{(\det\mathbf h)_{,1}}{(\det\mathbf h) \sqrt{\left|\det\tilde{\mathbf g}\right|}}.
$$
The pair $\mathcal{H},\mathcal{H}^{\bot}$ is the sought semi-invariant orthogonal frame in $\Xi^\bot$
when $C_{\rho}\neq 0$.

\subsection{The semi-invariant orthogonal frame $\{ \mathcal{H}, \mathcal{H}^\perp, \mathcal{C}, \mathcal{C}^\perp \}$ in the case when $C_\rho\,\ell_\mathcal{C}\neq 0 $}

In this sub-section we consider the case $\ell_\mathcal{C}\neq 0$ and construct a semi-invariant orthogonal frame on $\mathcal{M}$.


By construction, $\mathcal C \in \Xi$, where $\Xi$ is two-dimensional. 
Let $\mathcal{C}^\bot$ be the orthogonal complement of $\mathcal{C}$ in $\Xi$,
uniquely determined by the requirements 
$\mathbf{g}(\mathcal{C},\mathcal{C}^\bot) = 0$, $\mathop{\mathrm{vol}}\nolimits_{\mathbf{h}}(\mathcal{C},\mathcal{C}^\bot) > 0$,
$\ell_{\mathcal{C}^\bot} =\mathbf{g}(\mathcal{C}^\bot,\mathcal{C}^\bot)
 =\pm_{\mathbf{h}} \ell_{\mathcal{C}}$, where $\pm_{\mathbf{h}} = \mathop{\mathrm{sgn}} \det \mathbf{h}$, and 
 $$
\mathop{\mathrm{vol}}\nolimits_{\mathbf{h}}
 = \sqrt{\left| \det\mathbf h \right|}\,dz^1 \wedge dz^2
$$
is the $(t^1,t^2)$-dependent volume form of the orbits with metric $\mathbf{h}=h_{ij}\,dz^i\,dz^j$.

In coordinates, 
\begin{equation}
\label{eq:Cbot}
\mathcal{C}^\bot=\mathcal{C}^{\bot k}\frac\partial{\partial z^{k}},
\quad
\mathcal{C}^\bot{}^{1} = \frac{ h_{s2}\mathcal{C}^{s}}
{\sqrt{\left| \det\mathbf h \right|}},
\quad
\mathcal{C}^\bot{}^{2} = \frac{- h_{s1}\mathcal{C}^{s}}
{\sqrt{\left|\det\mathbf h\right|}}.
\end{equation}

The vector field $\mathcal{C}^\bot$ is semi-invariant,  since it transforms as $\mathcal{C}^\bot \mapsto (\mathop{\mathrm{sgn}}J_{\phi}) (\mathop{\mathrm{sgn}}\mathop{\mathrm{det}}\alpha^{i}_{j}) \mathcal{C}^\bot$ 
under pseudogroup transformations (\ref{eq:psgroup_fin}). Hence, when $\ell_\mathcal{C}\neq 0 $, the pair $\mathcal{C},\mathcal{C}^\bot$ defines a semi-invariant orthogonal frame on $\Xi$.


The following proposition summarizes the above results about the semi-invariant frame 
$\{ \mathcal{H}, \mathcal{H}^\bot$, $\mathcal{C},\mathcal{C}^\bot \}$.

\begin{prop} \label{g_frame}
 In the case when $C_\rho\,\ell_\mathcal{C}\neq 0$, the pairs of vector fields $\mathcal{H},\,\mathcal{H}^\bot \in \Xi^\bot$ and $\mathcal{C},\,\mathcal{C}^\bot \in \Xi$ form a semi-invariant orthogonal frame on $\mathcal{M}$. In particular, under the pseudo-group action \eqref{eq:psgroup_fin}, these fields transform as 

\begin{equation} 
\mathcal{H}\mapsto\mathcal{H},\quad \mathcal{H}^\bot \mapsto (\mathop{\mathrm{sgn}}J_{\phi}) \mathcal{H}^\bot,\quad \mathcal{C} \mapsto (\mathop{\mathrm{sgn}}J_{\phi}) \mathcal{C},\quad \mathcal{C}^\bot \mapsto (\mathop{\mathrm{sgn}}J_{\phi}) (\mathop{\mathrm{sgn}}\mathop{\mathrm{det}}\alpha^{i}_{j}) \mathcal{C}^\bot.
\label{transf_frame}
\end{equation}
 Moreover, the non-zero components of $\mathbf{g}$ in this frame are the invariants 
$$
 \mathbf{g}(\mathcal{H},\mathcal{H})=\ell_{\mathcal{H}}=\frac{1}{4}\,C_{\rho}, \quad \mathbf{g}(\mathcal{H}^\bot,\mathcal{H}^\bot)=\ell_{\mathcal{H}^\bot}= \pm_{\tilde{\mathbf{g}}}\,\frac{1}{4}\,C_{\rho},\quad \mathbf{g}(\mathcal{C},\mathcal{C})=\ell_{\mathcal{C}},\quad \mathbf{g}(\mathcal{C}^\bot,\mathcal{C}^\bot)=\ell_{\mathcal{C}^\bot}=\pm_{\mathbf{h}}\ell_{\mathcal{C}},
 $$ 
where 
$$
\pm_{\tilde{\mathbf{g}}} = \mathop{\mathrm{sgn}} \det\tilde{\mathbf g}, \quad
\pm_{\mathbf{h}} = \mathop{\mathrm{sgn}} \det \mathbf h.
$$

\end{prop}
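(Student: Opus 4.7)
The plan is that the proposition is almost entirely a bookkeeping statement whose ingredients have been established piecemeal in Lemmas~\ref{lem:sigma}, \ref{lem:XXbot} and in the preceding two subsections; I would organise the proof into three short blocks corresponding to (i) the frame / orthogonality claim, (ii) the transformation laws under $\mathfrak{G}$, and (iii) the values of $\mathbf g$ on the frame.

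For (i), I would first observe that $\mathcal{H}, \mathcal{H}^\bot$ lie in $\Xi^\bot$ by construction as horizontal lifts of $-\tfrac12 \mathcal{X}$ and $-\tfrac12 \mathcal{X}^\bot$, while $\mathcal{C}, \mathcal{C}^\bot$ lie in $\Xi$ by their coordinate forms \eqref{curv-vect} and~\eqref{eq:Cbot}. Since $T\mathcal{M}=\Xi\oplus\Xi^\perp$ is an $\mathbf g$-orthogonal decomposition, every $\mathcal H$-type vector is automatically $\mathbf g$-orthogonal to every $\mathcal C$-type vector. Orthogonality of $\mathcal{C}$ and $\mathcal{C}^\bot$ inside $\Xi$ is imposed by definition of $\mathcal{C}^\bot$. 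For orthogonality of $\mathcal{H}$ and $\mathcal{H}^\bot$ inside $\Xi^\perp$, I would use the fact (recalled in the Remark about horizontal lifts) that the lift $X\mapsto \widehat X$ preserves the scalar product, so
\[
\mathbf g(\mathcal H,\mathcal H^\bot)
 = \tfrac14\,\tilde{\mathbf g}(\mathcal X,\mathcal X^\bot)=0
\]
by~\eqref{eq:XXbot orthogonality}. Non-vanishing of the four vectors in the generic region is ensured by $C_\rho\neq 0$ (which forces $\mathcal X,\mathcal X^\bot$, hence $\mathcal H,\mathcal H^\bot$, to be non-null and non-zero) and by $\ell_{\mathcal C}\neq 0$ (which forces $\mathcal{C}\neq 0$ and, via~\eqref{eq:Cbot}, also $\mathcal{C}^\bot\neq 0$).

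For (ii), three of the four transformation rules in~\eqref{transf_frame} are already on record: $\mathcal{H}$ is invariant because it is the horizontal lift of the invariant field $-\tfrac12 \mathcal{X}$ from Lemma~\ref{lem:XXbot}, $\mathcal{H}^\bot$ inherits the sign $\mathop{\mathrm{sgn}} J_\phi$ from $\mathcal{X}^\bot$ for the same reason, and the transformation law of $\mathcal{C}$ was established in the previous subsection. The only genuinely new verification is the law for $\mathcal{C}^\bot$. I would do this directly from~\eqref{eq:Cbot}: the components $\mathcal{C}^k$ carry the sign $\mathop{\mathrm{sgn}} J_\phi$, the matrix $h_{ij}$ transforms covariantly through $\alpha_j^i\in\mathrm{GL}(2,\mathbb{R})$, and $\sqrt{|\det\mathbf h|}$ picks up the factor $1/|\det\alpha_j^i|$; collecting signs one gets the net factor $(\mathop{\mathrm{sgn}} J_\phi)(\mathop{\mathrm{sgn}}\det\alpha_j^i)$ as required.

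For (iii), the four displayed inner products are merely a compilation of identities already derived: $\mathbf{g}(\mathcal{H},\mathcal{H})=\tfrac14 C_\rho$ and $\mathbf{g}(\mathcal{H}^\bot,\mathcal{H}^\bot)=\pm_{\tilde{\mathbf g}}\tfrac14 C_\rho$ follow from the lift-preserves-scalar-product property together with~\eqref{eq:XXbot orthogonality}; $\mathbf{g}(\mathcal{C},\mathcal{C})=\ell_{\mathcal C}$ is the definition of $\ell_{\mathcal C}$; and $\mathbf{g}(\mathcal{C}^\bot,\mathcal{C}^\bot)=\pm_{\mathbf h}\ell_{\mathcal C}$ is built into the definition of $\mathcal{C}^\bot$. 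I do not anticipate a real obstacle: the whole argument is an assembly of prior results, and the only step requiring any calculation is the direct check of the transformation law for $\mathcal{C}^\bot$.
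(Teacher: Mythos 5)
Your proposal is correct and follows essentially the same route as the paper: the paper gives no separate proof, explicitly presenting the proposition as a summary of the preceding constructions (the lift of $-\tfrac12\mathcal{X}$, $-\tfrac12\mathcal{X}^\bot$ with the lift preserving scalar products, the orthogonal splitting $T\mathcal{M}=\Xi\oplus\Xi^\perp$, the defining conditions for $\mathcal{C}^\bot$, and the recorded transformation laws), which is exactly what you assemble. Your only genuinely computational step, the coordinate check of the sign factor for $\mathcal{C}^\bot$ from \eqref{eq:Cbot}, matches the paper's assertion.
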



\subsection{Further first-order scalar invariants}
\label{subsec:Further first-order scalar invariants}

In this section we discover three new semi-invariants 
$\Theta_{\rm I}$, $\Theta_{\rm II}$, $\Theta_{\rm III}$ by examining
the components of the O'Neill tensors in the frame 
$\{\mathcal{H},\mathcal{H}^{\bot},\mathcal{C},\mathcal{C}^\bot\}$.
This frame is well defined only when $C_{\rho}\ell_{\mathcal{C}} \neq 0$.

In the rest of the paper, the components of a tensor $\mathbf{W}$ with respect to the frame $\{\mathcal{H},\mathcal{H}^{\bot},\mathcal{C},\mathcal{C}^\bot\}$ will be denoted by $\mathbf{W}^{(a)(b)...}_{(c)(d)...}$. Thus, e.g.,  $\mathbf{g}_{(1)(1)} =\mathbf{g}\left(\mathcal{H},\mathcal{H}\right)$, $\mathbf{g}_{(1)(2)} =\mathbf{g}\left(\mathcal{H},\mathcal{H}^\bot\right)$, etc. 

Now, in view of Proposition \ref{g_frame} the nonzero components $\mathbf{g}_{(a)(b)}$ are scalar invariants, which coincide up to a sign with  $\ell_{\mathcal{H}}$ and $\ell_{\mathcal{C}}$.
Analogously, the only nonzero components $\mathbf{A}^{(a)}_{(b)(c)}$ of the O'Neill tensor $\mathbf{A}$ are
$\mathbf{A}^{(1)}_{(2)(3)} = -\tfrac12 \ell_{\mathcal{C}}$, 
$\mathbf{A}^{(2)}_{(1)(3)} = \pm_{\tilde{\mathbf{g}}} \tfrac12 \ell_{\mathcal{C}}$,
$\mathbf{A}^{(3)}_{(1)(2)} = -\tfrac12 \ell_{\mathcal{H}}$,
$\mathbf{A}^{(3)}_{(2)(1)} = \tfrac12 \ell_{\mathcal{H}}$,
yielding no new scalar invariant.

On the contrary, the nonzero components $\mathbf{T}^{(a)}_{(b)(c)}$ of the O'Neill tensor $\mathbf{T}$
are much more interesting. Indeed,  in order of increasing complexity the nonzero components of  $\mathbf{T}$ are


$$
\mathbf{T}^{(3)}_{(3)(2)}, \mathbf{T}^{(3)}_{(4)(2)}, \mathbf{T}^{(3)}_{(4)(1)}, \mathbf{T}^{(3)}_{(3)(1)},  
\mathbf{T}^{(4)}_{(4)(1)}.
$$ 
In  view of  (\ref{transf_frame}), the first three components of this quintuple are semi-invariants, whereas  $\mathbf{T}^{(3)}_{(3)(1)}$ and $\mathbf{T}^{(4)}_{(4)(1)}$ are invariants. Before exploring them in detail we also introduce two semi-invariant tensors 
of type $(1,1)$ defined by
$$
\mathbf{T_{\mathcal{C}}}(U) = \mathbf{T}(\mathcal{C}, U), \quad
\mathbf{T_{\mathcal{C}^\bot}}(U) = \mathbf{T}(\mathcal{C}^\bot, U) 
$$
for an arbitrary vector field $U$ on $\mathcal{M}$. 
Thus,
$$
(\mathbf{T_{\mathcal{C}}})^k_j = \mathbf{T}^k_{sj}\mathcal{C}^s, \quad
(\mathbf{T_{\mathcal{C}^\bot}})^k_j = \mathbf{T}^k_{sj}\mathcal{C}^{\bot s}.
$$
Of interest are
$$
\Theta_{\mathcal{C}} = \det \mathbf{T_{\mathcal{C}}}, \quad
\Theta_{\mathcal{C}^\bot} = \det \mathbf{T_{\mathcal{C}^\bot}},
$$
since they are invariants, while the traces
$(\mathbf{T_{\mathcal{C}}})^i_i = (\mathbf{T_{\mathcal{C}^\bot}})^i_i = 0$ vanish.
We could choose $\Theta_{\mathcal{C}}$ or $\Theta_{\mathcal{C}^\bot}$ 
as the missing sixth invariant, but there exist semi-invariants of lower complexity.
To find these semi-invariants, we first consider the vector fields 
$$
\mathcal{T} = \mathbf{T}_{(3)(2)} = \mathbf{T}(\mathcal{C},\mathcal{H}^\bot), \quad
\mathcal{T}^\bot = \mathbf{T}_{(4)(2)} = \mathbf{T}(\mathcal{C}^\bot,\mathcal{H}^\bot).
$$
They are respectively invariant and semi-invariant and constitute another orthogonal frame in $\Xi$. Moreover, denoting by  
$$
\ell_{\mathcal{T}} = \mathbf{g}(\mathcal{T},\mathcal{T}), \quad
\ell_{\mathcal{T}^\bot} = \mathbf{g}(\mathcal{T}^\bot,\mathcal{T}^\bot)
$$
the squared lengths, which are invariants, we have
$\ell_{\mathcal{T}^\bot} = \pm_{\mathbf{h}} \ell_{\mathcal{T}}$.
The angles between $\mathcal{T}$ or $\mathcal{T}^\bot$ and 
$\mathcal{C}$ or $\mathcal{C}^\bot$ (they all lie in $\Xi$)
are scalar semi-invariants, proportional to 
$\mathbf{g}(\mathcal{T},\mathcal{C}) = \ell_{\mathcal{C}} \mathbf{T}^{(3)}_{(3)(2)}$ and
$\mathbf{g}(\mathcal{T}^\bot,\mathcal{C}) = \ell_{\mathcal{C}} \mathbf{T}^{(3)}_{(4)(2)}$.

The coordinate description of the invariants and semi-invariants
introduced above involves the three relatively simple semi-invariants 
$$
\Theta_{\mathrm{I}} = 
\frac{1}{\left| \det\mathbf h \right|^{1/2} \left|\det\tilde{\mathbf g} \right|^{1/2}}
\left|\begin{array}{@{}ccc@{}}
h_{11,1} & h_{12,1} & h_{22,1} \\
h_{11,2} & h_{12,2} & h_{22,2} \\
(\mathcal{C}^2)^2 & -\mathcal{C}^1 \mathcal{C}^2 & (\mathcal{C}^1)^2
\end{array}\right|,
$$
$$
\Theta_{\mathrm{II}} =4\mathbf{g}(\mathcal{T},\mathcal{C})= 
\frac{1}{\det\mathbf h \left|\det\tilde{\mathbf g} \right|^{1/2}}
\left|\begin{array}{@{}ccc@{}}
h_{11,1} & h_{12,1} & h_{22,1} \\
h_{11,2} & h_{12,2} & h_{22,2} \\
-2 h_{1k} \mathcal{C}^k \mathcal{C}^2 &
   h_{11} (\mathcal{C}^1)^2 - h_{22} (\mathcal{C}^2)^2 &
   2 h_{2k} \mathcal{C}^1 \mathcal{C}^k
\end{array}\right|,
$$
and
$$
\Theta_{\mathrm{III}} = 
\frac{1}{\left| \det\mathbf h \right|^{3/2} \left|\det\tilde{\mathbf g} \right|^{1/2}}
\left|\begin{array}{@{}ccc@{}}
h_{11,1} & h_{12,1} & h_{22,1} \\
h_{11,2} & h_{12,2} & h_{22,2} \\
(h_{1k} \mathcal{C}^k)^2 &
   h_{1k} h_{2l} \mathcal{C}^k \mathcal{C}^l &
   (h_{2l} \mathcal{C}^l)^2
\end{array}\right|,
$$
which are such that
\begin{equation}\label{thetas_inv}
\Theta_{\mathrm{I}}^2=16\,\Theta_{\mathcal{C}}, \quad
\Theta_{\mathrm{II}} = 4 \ell_{\mathcal{C}} \mathbf{T}^{(3)}_{(3)(2)},\quad  \Theta_{\mathrm{III}}^2=\pm_{\tilde{\mathbf{g}} \mathbf{h}}16\,\Theta_{\mathcal{C}^\bot}.
\end{equation}


\begin{rem}Although all three semi-invariants $\Theta_{\rm I}$, $\Theta_{\rm II}$, $\Theta_{\rm III}$ exist (as determinants) independently of the condition $\ell_{\mathcal{C}} \neq 0$, it turns that they all vanish if $\ell_{\mathcal C} = 0$.
\end{rem} 


Summarizing,
$$
C_{\rho},
C_{\chi},
C_{\gamma},
Q_{\chi},
Q_{\gamma},
\Theta_{\mathcal{C}}, 
\Theta_{\mathcal{C}^\bot}, 
\ell_{\mathcal{C}},
\ell_{\mathcal{T}},
\mathbf{T}^{(3)}_{(3)(1)},  
\mathbf{T}^{(4)}_{(4)(1)}
$$
and 
$$
\Theta_{\mathrm{I}},
\Theta_{\mathrm{II}},
\Theta_{\mathrm{III}},
\mathbf{T}^{(3)}_{(3)(2)}, 
\mathbf{T}^{(3)}_{(4)(2)}, 
\mathbf{T}^{(3)}_{(4)(1)}, 
$$
are two (relatively simple) sets of first-order scalar invariants and semi-invariants, respectively.

As we already know, at most six first-order invariants can be functionally independent. 
Since the semi-invariants can only change their sign under the pseudogroup action \eqref{eq:psgroup_fin}, 
$$
C_{\rho},
C_{\chi},
Q_{\chi},
Q_{\gamma}, 
\ell_{\mathcal{C}},
(\Theta_{\mathrm{I}})^2
$$
turn out to be the simplest six functionally independent scalar invariants.

\begin{prop}
\label{prop:main_1ord}
The scalar differential invariants $I_{1}=C_{\rho}$, $I_{2}=C_{\chi}$,
$I_{3}=Q_{\chi}$, $I_{4}=Q_{\gamma}$, $I_{5}=\ell_{\mathcal{C}}$
and $I_{6}=(\Theta_{\mathrm{I}})^{2}$
form a maximal system of generically functionally independent scalar
differential invariants of the first order. 
\end{prop}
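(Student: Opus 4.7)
Invariance of $I_1,\dots,I_4$ was established in Lemma~\ref{lem:sigma} and Proposition~\ref{prop:first_inv}; invariance of $I_5=\ell_{\mathcal C}$ is the content of the lemma immediately following formula~\eqref{curv-vect}; and invariance of $I_6=(\Theta_{\mathrm I})^2$ follows from the first identity in~\eqref{thetas_inv}, since $\Theta_{\mathcal C}=\det \mathbf T_{\mathcal C}$ is manifestly built from invariantly defined tensors. Maximality of a system of six functionally independent first-order invariants is already guaranteed by Proposition~\ref{Nr} in the orthogonally intransitive case. Hence the only substantive point to verify is the \emph{generic functional independence} of $I_1,\dots,I_6$.

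The plan is to reduce functional independence to a rank computation and then to a single numerical check. Let $\mathbf v=(\tilde g_{ij},f_i^k,h_{kl};\tilde g_{ij,s},f_{i,s}^k,h_{kl,s})$ denote the standard coordinates on $J^1\tau$ associated with the decomposition~\eqref{eq:gG}. Since each $I_a$ is a rational function of these jet coordinates, the set
\[
\Sigma=\bigl\{\,\mathbf v\in J^1\tau \,\big|\, \operatorname{rank}\partial(I_1,\dots,I_6)/\partial \mathbf v <6\,\bigr\}
\]
is a proper closed subvariety as soon as a \emph{single} point $\mathbf v_0\in J^1\tau$ is exhibited at which the Jacobian has rank $6$. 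By semicontinuity of the rank, the complement of $\Sigma$ is then open and dense, which is precisely the meaning of generic functional independence.

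The concrete step is therefore to choose a simple test 1-jet and compute. I would take $\tilde g_{ij}=\delta_{ij}$, $h_{kl}=\mathrm{diag}(1,h)$ with some nonconstant $h$, $f_i^k$ chosen so that $\partial_{t^2}f_1^k-\partial_{t^1}f_2^k\neq 0$ (ensuring $\mathcal C\neq 0$ and thus $\ell_{\mathcal C}\neq 0$), and with the nonzero first derivatives $h_{kl,s}$ kept as independent parameters. Substituting into the explicit coordinate formulas collected in Section~\ref{subsec:Invariants C and Q} and in the determinantal expression for $\Theta_{\mathrm I}$, one obtains $I_1,\dots,I_6$ as explicit rational expressions of only a handful of parameters. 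A direct computation of a $6\times 6$ minor of the Jacobian with respect to, e.g., the variables $h_{11,1},h_{11,2},h_{22,1},h_{22,2},f_{1,2}^1,f_{2,1}^1$ then suffices.

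The main obstacle is not conceptual but computational: the determinantal factor in $\Theta_{\mathrm I}$ makes $I_6$ the most complicated of the six, so expanding its partial derivatives in closed form is unwieldy by hand. The practical remedy is to run the rank check in a computer algebra system on the ansatz above. Because the invariants have been engineered to specialise to simple expressions (unlike curvature invariants), this test is feasible, and once a single $\mathbf v_0$ with nonvanishing $6\times 6$ minor is exhibited, functional independence on the open dense complement of $\Sigma$ follows, completing the argument.
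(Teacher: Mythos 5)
Your proposal is correct and follows essentially the same route as the paper, whose entire proof is the assertion that the rank of the Jacobian at a generic point of the jet space equals $6$; you merely make explicit the standard supporting steps (invariance from the preceding lemmas, maximality from Proposition~\ref{Nr}, and the semicontinuity argument reducing genericity to a single rank-$6$ evaluation). No substantive difference in approach.
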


\begin{proof} The rank of the Jacobian at a generic point of the jet space is equal to 6.
\end{proof}

Thus, all invariants can be expressed in terms of
$C_{\rho},C_{\chi},Q_{\chi},Q_{\gamma}, \ell_{\mathcal{C}},(\Theta_{\mathrm{I}})^{2}$.
The simplest functional relations are provided by \eqref{thetas_inv} and

$$
\frac{\Theta_{\mathrm{II}}^2}{16\,\ell_{\mathcal{C}}^2}
 \pm_{\mathbf{h}}(\mathbf{T}^{(3)}_{(4)(2)})^2 
 \pm_{\tilde{\mathbf{g}}} \tfrac14 (Q_\chi - Q_\gamma) = 0.
$$
Moreover, the relations among
$Q_\chi, Q_\gamma, \ell_{\mathcal C}, \Theta_{\rm I}, \Theta_{\rm II}, \Theta_{\rm III}$ 
are
$$
-2 \ell_{\mathcal C} \sqrt{\smash[b]{|Q_\gamma|}} + \Theta_{\mathrm I} + \Theta_{\mathrm{III}} = 0
$$
and 
$$
\pm_{\tilde{\mathbf g}} 4 Q_\chi \ell_{\mathcal C}^2 
\mp_{\mathbf h} 8 \Theta_{\mathrm I} \sqrt{\smash[b]{|Q_\gamma|}} \ell_{\mathcal C}
\pm_{\mathbf h} 4 \Theta_{\mathrm I}^2 
+ \Theta_{\mathrm{II}}^2 = 0, 
$$
where $Q_\gamma$ is the scalar invariant defined in Section \ref{subsec:Invariants C and Q}.

\section{Additional second-order invariants}
\label{sec6}

Besides the fourteen second-order Carminati--McLenaghan invariants~\cite{C-M}
available for every four-dimensional metric, there are additional invariants originating in the 
submersion structure.

An infinite sequence of higher order scalar differential invariants
is obtained by repeatedly applying invariant or semi-invariant differentiations to the first 
order scalar invariants listed in Proposition~\ref{prop:main_1ord}.
Invariant differentiations correspond to vector fields on the orbit space
$\mathcal S$. We already introduced two such vector fields 
$\mathcal{X}, \mathcal{X}^\perp$ in Subsection~\ref{subsec:Invariants C and Q}, 
assuming that $C_\rho \ne 0$;
the corresponding invariant differentiations will be denoted $X,X^\perp$.
In coordinates, 
$$
X = \tilde{\mathbf{g}}^{is} \frac{(\det\mathbf h)_{,s}}{\det\mathbf h} D_{t^{i}}, 
$$
$$
X^\bot = \frac{(\det\mathbf h)_{,2}}{(\det\mathbf h)\sqrt{\left|\det\tilde{\mathbf g}\right|}} 
  D_{t^{1}}-\frac{(\det\mathbf h)_{,1}}{(\det\mathbf h)\sqrt{\left|\det\tilde{\mathbf g}\right|}} 
  D_{t^{2}},
$$
cf. Lemma~\ref{lem:XXbot}. 
Therefore, according to Proposition~\ref{prop:main_1ord}, we have 12 second-order invariants
$Z_{i}(I_{j})$, $i = 1,2$, $j = 1,\dots, 6$, where $Z_1 = X$ and $Z_2 = X^\perp$.

A related construction of higher-order invariants is as follows.
Let $\mathcal I^1, \mathcal I^2$ be two scalar invariants such that 
$$
\Delta = \left|\begin{array}{@{}cc@{}} X\mathcal I^1 & X\mathcal I^2 \\
      X^\bot\mathcal I^1 & X^\bot\mathcal I^2
    \end{array}\right|
\ne 0.
$$
For any other scalar invariant $\phi$, define scalar invariants 
$\phi_{\mathcal I^1}, \phi_{\mathcal I^2}$ by
$$
\phi_{\mathcal I^1} 
 = \frac1\Delta
   \left|\begin{array}{@{}cc@{}} X\phi & X\mathcal I^2 \\ 
        X^\bot\phi & X^\bot\mathcal I^2
    \end{array}\right|,
\quad
\phi_{\mathcal I^2}
 = \frac1\Delta 
    \left|\begin{array}{@{}cc@{}} X\mathcal I^1 & X\phi \\ 
        X^\bot\mathcal I^1 & X^\bot\phi 
    \end{array}\right|.
$$
When $\mathcal I^1, \mathcal I^2$ are of the first order and $\phi$ is of order $n \ge 1$, then 
$\phi_{\mathcal I^1}, \phi_{\mathcal I^2}$ are, in general, of order $n + 1$.
The invariants $\phi_{\mathcal I^1}, \phi_{\mathcal I^2}$ have an obvious geometric meaning. 
The scalar invariants $\mathcal I^1, \mathcal I^2$ restricted to the orbit space 
$\mathcal S$ (see Remark~\ref{not}) constitute a local coordinate system on $\mathcal S$ if they
are functionally independent or, equivalently, when $\Delta \ne 0$ (still assuming that $C_\rho \ne 0$).  
Let $\phi$ be any other invariant restricted to $\mathcal S$.
Solving $\{X \phi = X \mathcal I^i\,\partial\phi/\partial\mathcal I^i$,
$X^\bot \phi = X^\bot \mathcal I^i\,\partial\phi/\partial\mathcal I^i\}$
as a linear system for $\partial\phi/\partial\mathcal I^i$, we see that
the partial derivative 
$\partial\phi/\partial\mathcal I^i$ is equal to $\phi_{\mathcal I^i}$.

Additional invariants arise by means of formula (\ref{eq:inv_C_Q}) 
for suitable symmetric bilinear forms on the orbit space $\mathcal{S}$. 
For instance, denoting
$\mathop{\mathrm{ric}}=\mathop{\mathrm{Ric}}(\tilde{\mathbf{g}})$ the Ricci form of
$\mathcal S$, one has the invariants $Q_{\mathop{\mathrm{ric}}}$ and also $C_{\mathop{\mathrm{ric}}} = \mathrm{Sc}_{\mathcal S}$, 
the scalar curvature of $\mathcal{S}$.
Along the same line, in view of the invariance of $\sigma=d\ln |{\det\mathbf h}|$, 
the Hessian $\nu=\mathop{\mathrm{Hess}}(\ln |{\det\mathbf h}|)$,
defined by
\[
\nu(U,V) = \mathop{\mathrm{Hess}}(\ln |{\det\mathbf h}|)(U,V)
 = U\mathbin{\lrcorner} \nabla_{V}d\left(\ln |{\det\mathbf h}|\right)
 = V\mathbin{\lrcorner} \nabla_{U}d\left(\ln |{\det\mathbf h}|\right)
\]
for all vector fields $U,V$ on $\mathcal S$, is another symmetric bilinear form on $\mathcal{S}$. 
Hence, one obtains two additional
invariants $Q_{\nu}$ and 
$C_{\nu} = \Delta_{\mathcal S} \ln |{\det\mathbf h}|$, where $\Delta_{\mathcal S}$ is
the Laplace--Beltrami operator. It is worth mentioning here that
\[
C_{\nu}':=C_{\nu}-2C_{\chi}+C_{\rho}=g^{ij}h^{kl}h_{kl,ij}+g_{i}^{ij}h^{kl}h_{kl,j}+\frac{1}{2}g^{ij}g^{m,n}g_{mn,i}h^{kl}h_{kl,j}
\]
has a noteworthy simpler coordinate expression than $C_{\nu}$ itself.

\begin{prop} \label{prop:main_2ord}
The invariants $\mathcal{I}_{j}$, $Z_{i}(\mathcal{I}_{j})$, $i = 1,2$, $j = 1, \dots, 6$
(see Proposition~\ref{prop:main_1ord}), 
$C_{\mathop{\mathrm{ric}}}$ and $C_{\nu}$ (or $C_{\nu}'$) form a maximal
system of $20$ generically functionally independent scalar differential
invariants of order less than $3$. All other second-order invariants are
functionally dependent on these. 
\end{prop}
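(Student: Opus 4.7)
The plan is to argue along the same lines as Proposition~\ref{prop:first_inv} and Proposition~\ref{prop:main_1ord}, exploiting the upper bound on the number of functionally independent second-order scalar differential invariants supplied by Proposition~\ref{Nr}.

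First I would verify the count. The list consists of the six first-order invariants $\mathcal{I}_1,\dots,\mathcal{I}_6$, the twelve invariants $Z_i(\mathcal{I}_j)$ for $i=1,2$ and $j=1,\dots,6$, and the two additional invariants $C_{\mathop{\mathrm{ric}}}$ and $C_\nu$ (equivalently $C_\nu'$), giving $6+12+2=20$ elements. By Proposition~\ref{Nr}, in the orthogonally intransitive case the generic dimension of the orbit space $J^2\tau/\mathfrak{G}_\tau^{(2)}$ is exactly $20$, so this number is an upper bound for the count of generically functionally independent second-order scalar differential invariants. Hence, if the proposed $20$ invariants are generically functionally independent, they automatically form a maximal system, and every second-order scalar differential invariant is (on the corresponding open stratum) a function of them.

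Next I would establish functional independence. Each $Z_i(\mathcal{I}_j)$ is a scalar invariant by construction, since $Z_1=X$ and $Z_2=X^\perp$ are invariant differentiations (the first one unconditionally, the second up to a sign which disappears after applying it to an invariant inside a rational expression in these twelve quantities, or one can alternatively choose to work on a connected component $\mathfrak{G}_{\epsilon_1,\epsilon_2}$), while $C_{\mathop{\mathrm{ric}}}$ and $C_\nu$ are invariants by Lemma~\ref{lem:sigma} together with the general construction \eqref{eq:inv_C_Q} applied to invariant symmetric bilinear forms on $\mathcal{S}$. Functional independence then reduces to showing that the Jacobian of the map
\[
\bigl(\mathcal{I}_1,\dots,\mathcal{I}_6,\,Z_1(\mathcal{I}_1),\dots,Z_2(\mathcal{I}_6),\,C_{\mathop{\mathrm{ric}}},\,C_\nu\bigr)
\]
with respect to a chosen set of $20$ fibre coordinates on $J^2\tau$ has maximal rank $20$ at a generic point. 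As in the proof of Proposition~\ref{prop:first_inv}, this is checked by evaluating the Jacobian at a single conveniently chosen numeric jet: pick specific rational values of $b_{ij}$, $f_{ik}$, $h_{kl}$ and of their first and second partial derivatives with respect to $t^1,t^2$, substitute them into the closed-form expressions for all $20$ invariants, and verify that the resulting $20\times 20$ numerical matrix is nonsingular.

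The hard part is choosing the numeric jet so that all $20$ rows are simultaneously nontrivial. The point must be generic enough that $C_\rho\ne 0$ (so that $X,X^\perp$ and the associated $Z_i(\mathcal{I}_j)$ are well-defined), that $\ell_{\mathcal{C}}\ne 0$ (so that the sixth first-order invariant $(\Theta_{\mathrm I})^2$ is independent of the other five, as used in Proposition~\ref{prop:main_1ord}), and also that the second-order data entering $C_{\mathop{\mathrm{ric}}}$ and $C_\nu$ does not accidentally coincide with what is captured by the $Z_i(\mathcal{I}_j)$. The two invariants $C_{\mathop{\mathrm{ric}}}$ and $C_\nu$ are needed precisely because the derivatives $Z_i(\mathcal{I}_j)$ alone are built from combinations of second-order jets that, generically, span an $18$-dimensional subspace of the fibre $\mathbb{R}^{20}$; the two additional second-order scalars coming from the orbit metric's curvature and from the Hessian of $\ln|\det\mathbf h|$ complement these $18$ by two more functionally independent directions. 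Once the $20\times 20$ Jacobian is confirmed to be invertible at the chosen numeric jet, functional independence on the corresponding nonempty open stratum follows, and the upper bound $N_2=20$ from Proposition~\ref{Nr} forces both maximality and the claim that every other second-order scalar differential invariant is a function of the listed twenty.
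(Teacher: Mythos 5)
Your proposal is correct and follows essentially the same route as the paper: the paper's entire proof is the observation that the rank of the Jacobian at a generic point of the jet space equals $20$, combined implicitly with the upper bound $N_2=20$ from Proposition~\ref{Nr}, which is exactly the structure you spell out (invariance of each entry, a numeric rank computation at a sufficiently generic jet with $C_\rho\ne0$ and $\ell_{\mathcal C}\ne0$, and maximality forced by the dimension count). Your added remarks on why $C_{\mathop{\mathrm{ric}}}$ and $C_\nu$ are needed beyond the $Z_i(\mathcal I_j)$ are consistent with the paper's construction and do not change the argument.
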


\begin{proof}
The rank of the Jacobian at a generic point of the jet space is equal to 20.
\end{proof}

For example, one can check that 
\[
\begin{aligned}
&Q_{\mathop{\mathrm{ric}}}=\tfrac{1}{4}\left(C_{\mathop{\mathrm{ric}}}\right)^{2},
\\
&4 C_\rho^2 Q_\nu + (XC_\rho)^2 \pm_{\tilde{\mathbf g}} (X^\bot C_\rho)^2 - 2 C_\nu  C_\rho  XC_\rho = 0.
\end{aligned}
\]
If $C_\rho \ne 0$, then the last formula allows us to express $Q_\nu$ in terms of 
$C_\rho$, $X C_\rho$, $X^\bot C_\rho$ and $C_\nu$.

To extend the set of geometrically meaningful invariants we consider the sectional curvatures
\[
K(\Xi)=\frac{g\left(R\left(\partial_{z^{1}},\partial_{z^{2}}\right)\partial_{z^{1}},\partial_{z^{2}}\right)}{g\left(\partial_{z^{1}},\partial_{z^{1}}\right)g\left(\partial_{z^{2}},\partial_{z^{2}}\right)-g\left(\partial_{z^{1}},\partial_{z^{2}}\right)^{2}}
\]
and 
\[
K(\Xi^{\perp})=\frac{g\left(R\left(\mathbf e_{1},\mathbf e_{2}\right)\mathbf e_{1},\mathbf e_{2}\right)}
{g\left(\mathbf e_{1},\mathbf e_{1}\right)g\left(\mathbf e_{2},\mathbf e_{2}\right)
 - g\left(\mathbf e_{1},\mathbf e_{2}\right)^{2}}
\]
of $\Xi$ and $\Xi^{\perp}$, respectively, with vectors $\mathbf e_i$ being given by formulas~\eqref{eq:e}.

\begin{prop} \label{GaussCurv}
We have
$$
K(\Xi)=-\tfrac{1}{4}C_{\chi},\quad
K(\Xi^{\perp})=\tfrac{1}{2}C_{\mathop{\mathrm{ric}}}\mp_{\tilde{\bf g}} \tfrac{3}{4}
  \ell_{\mathcal{C}}.
$$
\end{prop}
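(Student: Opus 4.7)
Both identities will be established using O'Neill's curvature equations for the semi-Riemannian submersion $\pi:\mathcal{M}\to\mathcal{S}$ (see~\cite{ON,Bess}), which remain valid in arbitrary signature since their derivation only uses that $\nabla$ is the Levi-Civita connection of $\mathbf g$.

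For the identity $K(\Xi)=-\tfrac14 C_\chi$, the crucial observation is that each Killing leaf, with its induced metric $h_{kl}(t^1,t^2)\,dz^k\,dz^l$, has constant coefficients in the fibre variables $z^1,z^2$; hence every fibre is intrinsically flat and its Riemann tensor vanishes. O'Neill's Gauss-type equation for vertical $U,V$ then reduces, after accounting for the paper's sign convention and the antisymmetry of $R$ in its last two indices, to
\[
K(\Xi)=\frac{|T_UV|^2-g(T_UU,T_VV)}{g(U,U)g(V,V)-g(U,V)^2},\qquad U=\partial_{z^1},\ V=\partial_{z^2}.
\]
The Koszul formula, applied to $\nabla_{\partial_{z^i}}\partial_{z^j}$, shows that the pairing with any vertical $\partial_{z^n}$ vanishes, so the covariant derivative is entirely horizontal; pairing with $\partial_{t^m}$ then yields
\[
T_{\partial_{z^i}}\partial_{z^j}=\nabla_{\partial_{z^i}}\partial_{z^j}=-\tfrac12\,\tilde g^{mn}h_{ij,n}\,\mathbf{e}_m.
\]
A short expansion gives $g(T_UU,T_VV)-|T_UV|^2=\tfrac14\tilde g^{ij}(h_{11,i}h_{22,j}-h_{12,i}h_{12,j})$, and since $g(U,U)g(V,V)-g(U,V)^2=\det\mathbf{h}$, the quotient is exactly $-\tfrac14 C_\chi$ in the notation of Section~\ref{subsec:Invariants C and Q}.

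For $K(\Xi^\perp)=\tfrac12 C_{\mathrm{ric}}\mp_{\tilde{\mathbf g}}\tfrac34\ell_{\mathcal C}$, I apply the horizontal O'Neill equation, which expresses $g(R(X,Y)Y,X)$ for horizontal $X,Y$ in terms of the base curvature of $(\mathcal{S},\tilde{\mathbf g})$ and of $|A_XY|^2$, where $A_XY=\tfrac12\mathop{\mathrm{ver}}[X,Y]$. Two ingredients close the proof. Since $\mathcal{S}$ is two-dimensional, its scalar curvature is twice the Gauss curvature, so the base term produces $\tfrac12 C_{\mathrm{ric}}$ after division by $\det\tilde{\mathbf g}$. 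For the $A$-term, formulas~\eqref{EqRef: curv c}--\eqref{curv} show that $[\mathbf{e}_1,\mathbf{e}_2]=\sqrt{|\det\tilde{\mathbf g}|}\,\mathcal C$ is already vertical, whence
\[
A_{\mathbf{e}_1}\mathbf{e}_2=\tfrac12\sqrt{|\det\tilde{\mathbf g}|}\,\mathcal C,
\qquad
|A_{\mathbf{e}_1}\mathbf{e}_2|^2=\tfrac14|\det\tilde{\mathbf g}|\,\ell_{\mathcal C}.
\]
Dividing by $\det\tilde{\mathbf g}$ yields the signature factor $\pm_{\tilde{\mathbf g}}=\mathop{\mathrm{sgn}}\det\tilde{\mathbf g}$, which combined with the coefficient $-3$ from O'Neill's formula delivers the claimed $\mp_{\tilde{\mathbf g}}\tfrac34\ell_{\mathcal C}$.

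The main obstacle is sign bookkeeping. The O'Neill identities in the standard references are stated in Riemannian signature and with the $\langle R(X,Y)Y,X\rangle$ convention for sectional curvature, whereas the paper admits arbitrary signature and uses $\langle R(X,Y)X,Y\rangle$. The latter difference introduces one sign flip via antisymmetry of $R$ in its last two indices, while the former contributes the factor $|\det\tilde{\mathbf g}|/\det\tilde{\mathbf g}=\pm_{\tilde{\mathbf g}}$; these must be carried consistently through each step. Once the signs are fixed, the remaining computations reduce to elementary uses of the Koszul formula.
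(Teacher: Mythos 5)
Your proposal is correct, but it takes a genuinely different route from the paper's proof, which consists of the single sentence that both formulas are routinely checked in adapted coordinates, i.e.\ a direct computation of the curvature tensor. You instead derive the identities structurally from O'Neill's fundamental equations of the submersion $\pi:\mathcal M\to\mathcal S$: the vertical one from the Gauss equation of the fibres, after observing that each Killing leaf is intrinsically flat so that only the second-fundamental-form term survives, with the Koszul formula giving $T_{\partial_{z^i}}\partial_{z^j}=-\tfrac12\tilde g^{mn}h_{ij,n}\mathbf e_m$ and hence the combination $\tfrac14\tilde g^{ij}(h_{11,i}h_{22,j}-h_{12,i}h_{12,j})=\tfrac14(\det\mathbf h)\,C_\chi$; and the horizontal one from O'Neill's relation between $K(\Xi^\perp)$ and the base curvature minus $3|A_XY|^2/|X\wedge Y|^2$, together with $[\mathbf e_1,\mathbf e_2]=\sqrt{|\det\tilde{\mathbf g}|}\,\mathcal C$, the two-dimensional identity $K_{\mathcal S}=\tfrac12 C_{\mathrm{ric}}$, and the signature factor $|\det\tilde{\mathbf g}|/\det\tilde{\mathbf g}=\pm_{\tilde{\mathbf g}}$. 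All of these steps check out, and your approach has the advantage of explaining \emph{why} the coefficients $-\tfrac14$ and $-\tfrac34$ appear, at the price of importing the O'Neill machinery; the paper's coordinate check is shorter to state but unilluminating. One caveat on your sign bookkeeping: you announce a sign flip coming from the paper's ordering $g(R(X,Y)X,Y)$ versus the usual $\langle R(X,Y)Y,X\rangle$, but the stated results are consistent only if the paper's quotients equal the genuine sectional curvatures, so that flip must be cancelled by the paper's (unstated) sign convention for $R$ rather than survive into the final expressions. Your displayed formulas are precisely the ones consistent with that reading and they reproduce both identities, so the argument stands; had the flip actually persisted, you would have obtained $+\tfrac14C_\chi$ and $-\tfrac12C_{\mathrm{ric}}\pm_{\tilde{\mathbf g}}\tfrac34\ell_{\mathcal C}$ instead.
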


\begin{proof}
Both formulas are routinely checked in adapted coordinates.
\end{proof}

Finally, second-order invariants can be also obtained from the commutator $[\mathcal X, \mathcal X^\bot]$,
which lies in $\Xi$, hence is a linear combination of $\mathcal X$ and $\mathcal X^\bot$.
However, the coefficients are rather simple expressions in 
$C_\rho$, $X C_\rho$, $X^\bot C_\rho$ and $C_\nu$.

\begin{prop}
Let $C_\rho \ne 0$. Then the (semi-)invariant differentiations $\mathcal X$ and $\mathcal X^\bot$ 
satisfy the commutation relations
$$
[\mathcal X, \mathcal X^\bot] = J_1 \mathcal X + J_2 \mathcal X^\bot,
$$
where
$$
J_1 = -\frac{X^\bot C_\rho}{C_\rho}, \quad
J_2 = \frac{X C_\rho}{C_\rho} - C_\nu.
$$
\end{prop}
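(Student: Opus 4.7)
Since $C_\rho \ne 0$, Lemma~\ref{lem:XXbot} guarantees that $\{\mathcal X, \mathcal X^\bot\}$ is a frame on $\mathcal S$, so the existence of functions $J_1, J_2$ with $[\mathcal X,\mathcal X^\bot] = J_1\mathcal X + J_2\mathcal X^\bot$ is immediate; the task is only to identify $J_1, J_2$. My strategy is to derive a single differential identity relating the interior product $i_{[\mathcal X,\mathcal X^\bot]}\mathop{\mathrm{vol}}_{\tilde{\mathbf g}}$ to known invariants and then read off $J_1$ and $J_2$ by evaluating it on the frame.

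I would start by collecting three basic facts. First, $\mathcal X$ is the $\tilde{\mathbf g}$-gradient of $\ln|\det\mathbf h|$, so $\mathop{\mathrm{div}}\mathcal X = \Delta_{\mathcal S}\ln|\det\mathbf h| = C_\nu$, which translates into $\mathcal L_{\mathcal X}\mathop{\mathrm{vol}}_{\tilde{\mathbf g}} = C_\nu \mathop{\mathrm{vol}}_{\tilde{\mathbf g}}$. Second, the $1$-form $\sigma = d\ln|\det\mathbf h|$ is closed and satisfies $\sigma(\mathcal X) = \tilde{\mathbf g}(\mathcal X,\mathcal X) = C_\rho$ and $\sigma(\mathcal X^\bot) = \tilde{\mathbf g}(\mathcal X,\mathcal X^\bot) = 0$ by \eqref{eq:XXbot orthogonality}. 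Third, the defining relation $\sigma = i_{\mathcal X^\bot}\mathop{\mathrm{vol}}_{\tilde{\mathbf g}}$ together with $\sigma(\mathcal X) = C_\rho$ yields $\mathop{\mathrm{vol}}_{\tilde{\mathbf g}}(\mathcal X^\bot, \mathcal X) = C_\rho$, i.e., $\mathop{\mathrm{vol}}_{\tilde{\mathbf g}}(\mathcal X, \mathcal X^\bot) = -C_\rho$.

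Next I would apply Cartan's magic formula twice to the same object $\mathcal L_{\mathcal X}\sigma = \mathcal L_{\mathcal X}(i_{\mathcal X^\bot}\mathop{\mathrm{vol}}_{\tilde{\mathbf g}})$. On the left, since $\sigma$ is exact, $\mathcal L_{\mathcal X}\sigma = d(i_{\mathcal X}\sigma) + i_{\mathcal X}d\sigma = dC_\rho$. On the right, the standard commutator identity gives
\begin{equation*}
\mathcal L_{\mathcal X}(i_{\mathcal X^\bot}\mathop{\mathrm{vol}}\nolimits_{\tilde{\mathbf g}})
 = i_{[\mathcal X, \mathcal X^\bot]}\mathop{\mathrm{vol}}\nolimits_{\tilde{\mathbf g}}
 + i_{\mathcal X^\bot}\mathcal L_{\mathcal X}\mathop{\mathrm{vol}}\nolimits_{\tilde{\mathbf g}}
 = i_{[\mathcal X, \mathcal X^\bot]}\mathop{\mathrm{vol}}\nolimits_{\tilde{\mathbf g}}
 + C_\nu\,\sigma.
\end{equation*}
Equating the two expressions yields the key identity
\begin{equation*}
i_{[\mathcal X, \mathcal X^\bot]}\mathop{\mathrm{vol}}\nolimits_{\tilde{\mathbf g}}
 = dC_\rho - C_\nu\,\sigma.
\end{equation*}

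To finish, I expand the left-hand side using $[\mathcal X,\mathcal X^\bot] = J_1\mathcal X + J_2\mathcal X^\bot$ and evaluate both sides on the frame vectors. Plugging in $\mathcal X^\bot$ gives $-J_1 C_\rho = X^\bot C_\rho$, so $J_1 = -X^\bot C_\rho/C_\rho$; plugging in $\mathcal X$ gives $J_2 C_\rho = X C_\rho - C_\nu C_\rho$, so $J_2 = X C_\rho/C_\rho - C_\nu$. The only delicate step is the sign bookkeeping in $\mathop{\mathrm{vol}}_{\tilde{\mathbf g}}(\mathcal X, \mathcal X^\bot) = -C_\rho$, which I handle via the defining relation $\sigma = i_{\mathcal X^\bot}\mathop{\mathrm{vol}}_{\tilde{\mathbf g}}$; everything else is a coordinate-free manipulation that sidesteps the swelling coordinate formulas.
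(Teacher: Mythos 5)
Your argument is correct, and it takes a genuinely different route from the paper. The paper's proof projects the commutator onto the orthogonal frame, writing $J_1,J_2$ as $\tilde{\mathbf g}$-inner products of $[\mathcal X,\mathcal X^\bot]$ with $\mathcal X$ and $\mathcal X^\bot$ divided by $C_\rho$, and then states that the two resulting scalar identities are ``routinely checked in adapted coordinates'' --- i.e.\ the substance is a brute-force coordinate computation that is not displayed. You instead stay entirely coordinate-free: the single identity $i_{[\mathcal X,\mathcal X^\bot]}\mathop{\mathrm{vol}}_{\tilde{\mathbf g}} = dC_\rho - C_\nu\,\sigma$, obtained by computing $\mathcal L_{\mathcal X}\sigma$ in two ways from the defining relation $\sigma = i_{\mathcal X^\bot}\mathop{\mathrm{vol}}_{\tilde{\mathbf g}}$, packages both of the paper's identities at once, and evaluating it on the frame (using $\sigma(\mathcal X)=C_\rho$, $\sigma(\mathcal X^\bot)=0$, $\mathop{\mathrm{vol}}_{\tilde{\mathbf g}}(\mathcal X,\mathcal X^\bot)=-C_\rho$) gives exactly the stated $J_1,J_2$. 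All the ingredients you invoke are legitimate: $\sigma$ is exact, so $\mathcal L_{\mathcal X}\sigma = d(\sigma(\mathcal X)) = dC_\rho$; $\mathcal X$ is the $\tilde{\mathbf g}$-gradient of $\ln|\det\mathbf h|$, so $\mathcal L_{\mathcal X}\mathop{\mathrm{vol}}_{\tilde{\mathbf g}} = (\Delta_{\mathcal S}\ln|\det\mathbf h|)\mathop{\mathrm{vol}}_{\tilde{\mathbf g}} = C_\nu\mathop{\mathrm{vol}}_{\tilde{\mathbf g}}$, matching the paper's definition of $C_\nu$ as the trace of the Hessian; and $\mathcal L_{\mathcal X}\circ i_{\mathcal X^\bot} - i_{\mathcal X^\bot}\circ\mathcal L_{\mathcal X} = i_{[\mathcal X,\mathcal X^\bot]}$ is standard. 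What your approach buys is a conceptual explanation of where $C_\nu$ comes from (it is the divergence of $\mathcal X$) and freedom from the orientation/signature bookkeeping that the metric-projection route requires ($\tilde{\mathbf g}(\mathcal X^\bot,\mathcal X^\bot)=\pm_{\tilde{\mathbf g}}C_\rho$, whereas your $\mathop{\mathrm{vol}}_{\tilde{\mathbf g}}(\mathcal X,\mathcal X^\bot)=-C_\rho$ carries no sign ambiguity); what the paper's route buys is brevity of statement, at the cost of an unshown coordinate verification. Your derivation also independently confirms the formulas for $J_1$ and $J_2$ as stated in the proposition.
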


\begin{proof}
By orthogonality, we have
$$
J_1 = \frac{\tilde{\mathbf{g}}(\mathcal{X},[\mathcal{X},\mathcal{X}^\bot])}{C_\rho}, \quad
J_2 = \frac{\tilde{\mathbf{g}}(\mathcal{X^\bot},[\mathcal{X},\mathcal{X}^\bot])}{C_\rho}.
$$
Identities 
$\tilde{\mathbf{g}}(\mathcal{X},[\mathcal{X},\mathcal{X}^\bot]) = -\mathcal{X}^\bot(C_\rho)$
and 
$\tilde{\mathbf{g}}(\mathcal{X^\bot},[\mathcal{X},\mathcal{X}^\bot]) 
 = X^\bot C_\rho - C_\rho C_\nu$  
are routinely checked in adapted coordinates.
\end{proof}

\section{$\Lambda$-vacuum Einstein equations for $G_2$ metrics, and their solutions in the special cases $C_\rho=0$ and $\ell_\mathcal{C}=0$ }


\label{sect:Lambda vacuum eqs}

Vacuum Einstein equations for metrics with two commuting Killing fields
have been derived by
Geroch~\cite{Ger1,Ger2}, Gaffet~\cite[eq.~(3.15)]{Gaff}, 
Whelan and Romano~\cite{W-R}.
Here we look for $\Lambda$-vacuum equations.
We obtain a tractable system by choosing $\orbitmetric_{ij}, f_j^k, h_{kl}$
as dependent variables, i.e., substituting
$$
f_{il} = f^k_i h_{kl}, \quad
g_{ij} = \orbitmetric_{ij} + f_i^k f_j^l h_{kl}.
$$
This choice ensures that the components of the inverse matrix $\fourmetric^{\alpha\beta}$ 
are relatively simple.
Then, to  simplify the Einstein equations further, we exploit the fact that the metric $\orbitmetric$, 
being two-dimensional and nondegenerate, is conformally flat. Hence, depending on the position of 
the Killing leaves in the spacetime, $\orbitmetric$ is either
conformally Euclidean or conformally Minkowskian.
In addition, denoting by $H$ the symmetric $2 \times 2$ matrix with elements $h_{kl}$, 
it is useful to introduce the row vectors
$$
\begin{gathered}
F_i = (\begin{array}{@{}cc@{}}
\p{i}1 & \p{i}2 \end{array}),
\quad i = 1,2,
\\
P = (F_{1,2} - F_{2,1}) H,
\end{gathered}
$$
i.e., $P$ is a row vector obtained by multiplication of the row vector
$F_{1,2} - F_{2,1}$ by $H$ from the right.
By comparison with formula~\eqref{curv-vect}, $P = 0$ iff $\mathcal C = 0$ 
iff the metric is orthogonally transitive.

Below we derive the $\Lambda$-vacuum Einstein equations for $\mathrm G_2$-metrics.
We find their explicit solutions in the special cases $C_\rho=0$ and $\ell_\mathcal{C}=0$. 
In particular, we show that when $C_\rho = 0$, then the corresponding  $\Lambda$-vacuum Einstein metrics belong to the well-understood class of $pp$-waves, characterised by the presence of a
constant null vector, see~\cite[\S 25.5]{S-K-M-H-H} and references therein. In this special case all first-order invariants vanish.
In the case when $\ell_\mathcal{C}=0$, on the other hand, we show that the explicit vacuum solution originally presented by Kundu \cite{Kun} can be extended to the $\Lambda$-vacuum case. 
In particular, we find two new solutions (\ref{eq:Lambda Kundu solution}--\ref{eq:Lambda Kundu equation}) and 
(\ref{eq:Lambda Kundu solution c = 0}--\ref{eq:Lambda Kundu equation c = 0}).

\subsection{The case when $\tilde{\mathbf{g}}$ is Lorentzian and explicit solutions with $C_\rho=0$}




\begin{prop} \label{prop:Lorentz Crho nonzero}
Let the metric \eqref{eq:gG} be such that $C_\rho \ne 0$, with $\det \tilde{\mathbf{g}} < 0$ and  $\det H > 0$. Then, by writing the orbit metric in the conformally flat form
$$
\tilde{\mathbf{g}} = \left(
\begin{array}{@{}cc@{}}
0 & q \\ q & 0
\end{array}
\right),
\quad q=q(t^1,t^2) \ne 0,
$$
the $\Lambda$-vacuum Einstein equations
$\fourRicci_{\mu\nu} - \Lambda \fourmetric_{\mu\nu} = 0$
are equivalent to the compatible system of matrix and scalar equations
\begin{equation}
\label{EiEq hyp}
\begin{gathered}
(r H_{,1} H^{-1})_{,2} + (r H_{,2} H^{-1})_{,1}
 = 2 \Lambda q r E + \frac{q}{r} A^\top A H^{-1},
\\ (\ln q)_{,1}
 = (\ln (\ln r)_{,1})_{,1} + \frac{\tr (H_{,1} H^{-1} H_{,1} H^{-1})}{4 (\ln r)_{,1}},
\\ (\ln q)_{,2}
 = (\ln (\ln r)_{,2})_{,2} + \frac{\tr (H_{,2} H^{-1} H_{,2} H^{-1})}{4 (\ln r)_{,2}},
 \\ F_{1,2} - F_{2,1}=\frac{q}{r}A\,H^{-1},
\end{gathered}
\end{equation}
where $r = \sqrt{\mathop{\mathrm{det}}H}$, $E$ is the $2 \times 2$ unit matrix and
$$
A = (\begin{array}{@{}cc@{}}
a_1 & a_2 \end{array}),
\quad a_i = \text{const}
$$
denotes an arbitrary constant row vector, which is zero if and only if $\Xi^{\perp}$ is completely integrable. Moreover,
$$
r_{,12} = -\Lambda q r - \frac{q}{4 r} A H^{-1} A^\top
$$
as a consequence of first equation of \eqref{EiEq hyp}.
\end{prop}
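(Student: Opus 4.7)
The plan is to compute the 4D Ricci tensor $\fourRicci_{\mu\nu}$ of the metric \eqref{eq:gG} in the special gauge $\orbitmetric_{11}=\orbitmetric_{22}=0$, $\orbitmetric_{12}=q$, and then split the field equations $\fourRicci_{\mu\nu} = \Lambda\fourmetric_{\mu\nu}$ into three groups according to the index types relative to the decomposition $T\mathcal{M}=\Xi^{\perp}\oplus\Xi$.

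First I would exploit the block structure of the 4D metric. In coordinates $(t^1,t^2,z^1,z^2)$ the matrix $\fourmetric_{\mu\nu}$ has the block form
$$
[\fourmetric_{\mu\nu}]=
\begin{pmatrix}
\orbitmetric_{ij}+\p{i}{k}\p{j}{l}h_{kl} & \p{i}{l}h_{lk}\\
h_{kl}\p{j}{l} & h_{kl}
\end{pmatrix},
$$
and a Schur-complement computation gives the sparse inverse
$$
[\fourmetric^{\mu\nu}]=
\begin{pmatrix}
\orbitmetric^{ij} & -\orbitmetric^{ij}\p{j}{k}\\
-\orbitmetric^{ij}\p{j}{k} & h^{kl}+\orbitmetric^{ij}\p{i}{k}\p{j}{l}
\end{pmatrix}.
$$
In the chosen null conformal gauge $\orbitmetric^{-1}$ has only the off-diagonal entry $1/q$, so the ensuing computation of the Christoffel symbols and the Ricci components is long but entirely mechanical: each $\fourRicci_{\mu\nu}$ becomes a polynomial expression in the entries of $H$, $F_i$, $q$ and their derivatives up to order two.

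Second, I would dispose of the mixed block $\fourRicci_{i,k+2}-\Lambda\fourmetric_{i,k+2}=0$. Expressed through the row vector $P=(F_{1,2}-F_{2,1})H$, these four equations reduce after some algebra to the condition that $(r/q)P$ is constant on $\mathcal{S}$; denoting this constant row vector by $A=(a_1,a_2)$, one recovers the last equation of \eqref{EiEq hyp}, $F_{1,2}-F_{2,1}=(q/r)AH^{-1}$. This is the classical Geroch twist-potential step, specialised here to the adapted null gauge; by construction $A$ vanishes precisely when the metric is orthogonally transitive. The pure $z$-$z$ block $\fourRicci_{k+2,l+2}-\Lambda h_{kl}=0$, after one substitutes this expression for $F_{1,2}-F_{2,1}$ and collects derivatives of $H$, reassembles into the matrix equation
$$
(rH_{,1}H^{-1})_{,2}+(rH_{,2}H^{-1})_{,1}=2\Lambda qrE+\tfrac{q}{r}A^{\top}A\,H^{-1},
$$
which is the first equation of \eqref{EiEq hyp}.

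Finally, the pure $t$-$t$ block yields the scalar equations for $q$ and the identity for $r_{,12}$. The diagonal components $\fourRicci_{11}=0$ and $\fourRicci_{22}=0$ carry no $\Lambda$-term because $\orbitmetric_{11}=\orbitmetric_{22}=0$, and after using $\tr(H_{,i}H^{-1})=2(\ln r)_{,i}$ they rearrange exactly into the two scalar equations for $q$ in \eqref{EiEq hyp}. The off-diagonal $\fourRicci_{12}-\Lambda q=0$ produces the stated formula for $r_{,12}$; to establish the compatibility claimed in the proposition, I would take the trace of the matrix equation above, use $\tr((rH_{,i}H^{-1})_{,j})=2r_{,ij}$ together with $\tr(A^{\top}A\,H^{-1})=AH^{-1}A^{\top}$, and verify that the result reproduces the $r_{,12}$-equation, so that the latter is a differential consequence of the matrix equation rather than an independent constraint. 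The main obstacle is the sheer volume of symbolic computation needed to bring $\fourRicci_{\mu\nu}$ into a form from which each of the three blocks can be cleanly identified with the corresponding piece of \eqref{EiEq hyp}; the only step with genuine conceptual content is the integration of the mixed block into the twist identity $F_{1,2}-F_{2,1}=(q/r)AH^{-1}$, and everything else is careful bookkeeping plus the final trace check.
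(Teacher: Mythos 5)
Your outline follows essentially the same route as the paper: compute $\fourRicci_{\mu\nu}$ directly in the null conformal gauge, split the ten equations by the $\Xi^{\perp}\oplus\Xi$ block structure, integrate the mixed block to get $P=(q/r)A$ with $A$ constant, reassemble the $zz$-block into the matrix equation, read off the scalar $q$-equations from the horizontal block, and obtain $r_{,12}$ by tracing. Three details need correcting, though none derails the argument. First, your claim that $\fourRicci_{11}=0$ and $\fourRicci_{22}=0$ "carry no $\Lambda$-term because $\orbitmetric_{11}=\orbitmetric_{22}=0$" fails as written: the covariant components satisfy $\fourmetric_{11}=F_1HF_1^{\top}\neq0$ in general. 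The statement is true for the contravariant components $\fourmetric^{11}=\orbitmetric^{11}=0$ (visible in the Schur-complement inverse you wrote down), or equivalently for the horizontal-frame components $\fourRicci(\mathbf e_i,\mathbf e_j)$ with $\mathbf e_i=\partial_{t^i}-f_i^k\partial_{z^k}$; the paper works with $\mathbf L^{\mu\nu}$ for exactly this reason. Second, the constancy of $(r/q)P$ does not follow from the mixed block alone: the mixed block gives $P_{,i}$ in terms of $H$ and its derivatives, and one must combine this with the $q_{,i}$ equations from the horizontal block to recognise $P_{,i}=(\ln(q/r))_{,i}P$. Third, the off-diagonal horizontal component $\fourRicci(\mathbf e_1,\mathbf e_2)-\Lambda q=0$ yields the second-order equation for $(\ln q)_{,12}$ (which the paper shows is a differential consequence of the two first-order $q$-equations), not the $r_{,12}$ formula; the latter arises only as the trace of the $zz$-block matrix equation, which is the check you do carry out at the end, so the formula is still obtained.
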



\begin{proof}
By assumption $C_\rho \ne 0$, where
$$
C_\rho = 8 \frac{r_{,1} r_{,2}}{r^2 q}
 = 2 \frac{(\det H)_{,1} (\det H)_{,2}} {(\det H)^2 q}.
$$
Therefore, $(\det H)_{,1} \ne 0$, $(\det H)_{,2} \ne 0$.
Denote $\fourRicci$ the Ricci tensor of the metric $\fourmetric$.
Solving the Einstein equations $\fourRicci_{\mu\nu} - \Lambda \fourmetric_{\mu\nu}$
with respect to $H_{,12}, P_{,1}, P_{,2}, q_{,1}, q_{,2}, q_{,12}$, we obtain one $2\times2$ matrix equation
\begin{equation}
\label{EiEq hyp H}
H_{,12}  - \frac12 H_{,1} H^{-1} H_{,2} - \frac12 H_{,2} H^{-1} H_{,1}
 + \frac14 \frac{(\mathop{\mathrm{det}}H)_{,1}}{\mathop{\mathrm{det}}H} H_{,2} + \frac14 \frac{(\mathop{\mathrm{det}}H)_{,2}}{\mathop{\mathrm{det}}H} H_{,1}
 + \Lambda q H + \frac{1}{2q} P^\top P = 0,
\end{equation}
two vector equations
\begin{equation}
\label{EiEq hyp P}
P_{,1}
 = \left(\frac{(\mathop{\mathrm{det}}H)_{,11}}{(\mathop{\mathrm{det}}H)_{,1}} 
     - \frac{\mathop{\mathrm{det}}(H_{,1})}{(\mathop{\mathrm{det}}H)_{,1}} - \frac{(\mathop{\mathrm{det}}H)_{,1}}{\mathop{\mathrm{det}}H}\right) P,
\quad P_{,2}
 = \left(\frac{(\mathop{\mathrm{det}}H)_{,22}}{(\mathop{\mathrm{det}}H)_{,2}} 
     - \frac{\mathop{\mathrm{det}}(H_{,2})}{(\mathop{\mathrm{det}}H)_{,2}} - \frac{(\mathop{\mathrm{det}}H)_{,2}}{\mathop{\mathrm{det}}H}\right) P,
\end{equation}
and three scalar equations
\begin{equation}
\label{EiEq hyp q}
\begin{gathered}
q_{,1}
 = \left(\frac{(\mathop{\mathrm{det}}H)_{,11}}{(\mathop{\mathrm{det}}H)_{,1}} 
     - \frac{\mathop{\mathrm{det}}(H_{,1})}{(\mathop{\mathrm{det}}H)_{,1}} - \frac12 \frac{(\mathop{\mathrm{det}}H)_{,1}}{\mathop{\mathrm{det}}H}\right) q,
\quad q_{,2}
 = \left(\frac{(\mathop{\mathrm{det}}H)_{,22}}{(\mathop{\mathrm{det}}H)_{,2}} 
     - \frac{\mathop{\mathrm{det}}(H_{,2})}{(\mathop{\mathrm{det}}H)_{,2}} - \frac12 \frac{(\mathop{\mathrm{det}}H)_{,2}}{\mathop{\mathrm{det}}H}\right) q,
\\
(\ln q)_{,12}  = \frac{(\tr H)_{,1} (\tr H)_{,2} - \tr(H_{,1} H_{,2})}{4 \mathop{\mathrm{det}}H}
 + \frac{3}{4 q} P H^{-1} P^\top.
\end{gathered}
\end{equation}
By comparison of the cross derivatives $q_{,12}$ and $q_{,21}$, one sees that the third equation~\eqref{EiEq hyp q} 
is a differential consequence of the first two.

Conversely, if the five equations~\eqref{EiEq hyp H}, \eqref{EiEq hyp P} and~\eqref{EiEq hyp q} hold, then $\fourRicci_{\mu\nu} = \Lambda \fourmetric_{\mu\nu}$.
Compatibility of the equations is routinely checked.

As an easy consequence of equations~\eqref{EiEq hyp P} and~\eqref{EiEq hyp q} we obtain
$$
P_{,1}  = \left(\frac{q_{,1} }{q} - \frac12 \frac{(\mathop{\mathrm{det}}H)_{,1}}{\mathop{\mathrm{det}}H}\right) P,
\quad
P_{,2} = \left(\frac{q_{,2} }{q} - \frac12 \frac{(\mathop{\mathrm{det}}H)_{,2}}{\mathop{\mathrm{det}}H}\right) P.
$$
It follows that $r P/q$ is a constant vector (recall that $r = \sqrt{\mathop{\mathrm{det}}H}$).
Therefore, we can write
$$
P = \frac{q}{r} A,
$$
where $A$ is an arbitrary constant row vector.
Now the Einstein equations reduce to system~\eqref{EiEq hyp}.
\end{proof}

\begin{rem}
Notice that, when $A = 0$ and $\Lambda = 0$, equations~\eqref{EiEq hyp} reduce to the well-known
Belinsky--Zakharov~\cite{B-Z} formulation of the vacuum Einstein equations.
\end{rem}


\begin{prop} \label{prop:Lorentz Crho zero}
All $\Lambda$-vacuum Einstein metrics of the form \eqref{eq:gG}, with $C_\rho = 0$, $\det \tilde{\mathbf{g}} < 0$ and  $\det H > 0$, satisfy $\Lambda=0$ and in adapted coordinates can be written in the form

\begin{equation}
\label{pp-wave1}
\fourmetric = d t^1\,d t^2
 + R^2 (dz^1 + W\,dz^2)^2 + S^2(dz^2)^2,
\end{equation}
with $R$, $W$ and $S$ differentiable functions of\/ $t^1$ such that $R\,S\neq0$ and 

\begin{equation}
\label{eq_pp-wave1}
\left(W'\right)^2= \frac{2S^2}{R^2}\left( \frac{R''}{R}+\frac{S''}{S} \right).
\end{equation}
In particular these Ricci-flat metrics are such that $\mathcal{C}=0$ (then $\ell_{\mathcal{C}}=0$), hence are orthogonally transitive and, in addition, are pp-waves since $\partial_{t^2}$ is a null Killing vector field such that $\nabla \partial_{t^2}=0$.
\end{prop}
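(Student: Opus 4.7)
The plan is to specialise the derivation of Proposition~\ref{prop:Lorentz Crho nonzero} to the degenerate case $C_\rho = 0$, in which the divisions by $(\det H)_{,2}$ used there are no longer legitimate. First, in the null coordinates making $\tilde{\mathbf g} = 2q\,dt^1\,dt^2$, the explicit formula $C_\rho = 8\,r_{,1}\,r_{,2}/(r^2 q)$ shows that $C_\rho = 0$ forces $r_{,1}\,r_{,2} = 0$, so, up to interchanging $t^1 \leftrightarrow t^2$, I may assume $r_{,2} = 0$ (the subcase $r$ constant is handled along the way).

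Next I would recompute $\fourRicci_{\mu\nu} - \Lambda\,\fourmetric_{\mu\nu}$ directly under $r_{,2}=0$, without performing the divisions that produced \eqref{EiEq hyp P}--\eqref{EiEq hyp q}. I expect the components involving derivatives with respect to $t^2$ to yield algebraically, after setting $r_{,2}=0$, both the vanishing of the row vector $P$ (i.e.\ $\mathcal C = 0$, the metric is orthogonally transitive) and $\Lambda = 0$. Once $P=0$, the matrix-valued $1$-form $f_j^k\,dt^j$ is closed on the orbit space, so locally $f_j^k = \partial_j\psi^k$, and a shift $z^k \mapsto z^k - \psi^k(t^1,t^2)$, allowed by Proposition~\ref{prop:pseudogroup}, eliminates the $f_j^k$ entirely.

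After these reductions the Einstein equations should further force $H$ to depend only on $t^1$, and the residual coordinate freedom $\bar t^1 = \phi^1(t^1)$, $\bar t^2 = \phi^2(t^2)$ (which preserves the null-conformal form of $\tilde{\mathbf g}$) can then be used to normalise $q$ to $\tfrac12$. Any positive-definite symmetric $2\times 2$ matrix $H(t^1)$ admits the Iwasawa-type decomposition
\[
H = \begin{pmatrix} R^2 & R^2 W \\ R^2 W & R^2 W^2 + S^2 \end{pmatrix},
\]
with $R,W,S$ functions of $t^1$, giving the explicit form~\eqref{pp-wave1}. The only remaining nontrivial Einstein equation is the $\fourRicci_{11}$-component, which a routine computation reduces to the ODE~\eqref{eq_pp-wave1}. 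The pp-wave character of~\eqref{pp-wave1} then follows by inspection: $\fourmetric(\partial_{t^2},\partial_{t^2}) = 0$ is immediate from the block structure, and because $\fourmetric_{\mu\,t^2} = \tfrac12 \delta^1_\mu$ is constant while $\fourmetric$ does not depend on $t^2$, the Christoffel symbols $\Gamma^\mu_{\nu\,t^2}$ all vanish, so $\nabla \partial_{t^2} = 0$.

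The main technical obstacle is the middle stage: carefully re-deriving the Einstein equations in the degenerate regime $r_{,2} = 0$ where the clean formulation of Proposition~\ref{prop:Lorentz Crho nonzero} breaks down, and extracting simultaneously the vanishing of $\Lambda$, of $P$, and the $t^2$-independence of $H$ (and of $q$, up to gauge). An attractive alternative to deducing $\Lambda=0$ \emph{a priori} is to postpone that step until after the pp-wave normal form has been produced: once $\partial_{t^2}$ is known to be parallel, the identity $\fourRicci_{\mu\nu}(\partial_{t^2})^\nu = 0$ combined with $\fourmetric_{\mu\,t^2} \neq 0$ and the Einstein equation immediately yields $\Lambda = 0$.
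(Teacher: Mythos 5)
Your overall strategy coincides with the paper's: use $C_\rho=0$ to force $(\det H)_{,2}=0$ (after possibly swapping $t^1\leftrightarrow t^2$), then squeeze the remaining Einstein equations in this degenerate regime to get $\mathcal C=0$, $\Lambda=0$ and the normal form. However, the entire middle stage --- which you yourself identify as the crux --- is left at the level of ``I expect'' and ``should'', and the one step you do assert concretely is not quite right as stated. The paper's actual mechanism is worth recording: it passes to Weyl--Lewis--Papapetrou parameters $r,s,w$ for $\mathbf h$, observes that $\mathbf L^{11}=0$ is equivalent to $s_{,2}^2+w_{,2}^2=0$ (so $H$ depends on $t^1$ \emph{only}, and this comes \emph{first}, before anything about $P$ or $\Lambda$), deduces from $\mathbf L^{13}+f_2^1\mathbf L^{12}$ and $\mathbf L^{14}+f_2^2\mathbf L^{12}$ that the components of $\mathcal C$ depend only on $t^1$, and then combines $\mathbf L_{33},\mathbf L_{34},\mathbf L_{44}$ with $\mathbf L^{12}$ and the algebraic identity $(\mathcal C^3)^2(\mathcal C^2)^2=(\mathcal C^3\mathcal C^2)^2$ to obtain $\Lambda=0$, $\mathcal C=0$ and $q_{,12}=q_{,1}q_{,2}/q$ simultaneously. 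Your proposed order (first $P=0$ and $\Lambda=0$ from ``the components involving $t^2$-derivatives'', then $H=H(t^1)$) reverses the logical dependence and would have to be re-justified from scratch.

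The concrete gap: you claim the residual freedom $\bar t^1=\phi^1(t^1)$, $\bar t^2=\phi^2(t^2)$ suffices to normalise $q$ to a constant. Under such a change $q\mapsto q/(\phi^1{}'(t^1)\,\phi^2{}'(t^2))$, so this is possible only if $q$ separates as $q=q_1(t^1)\,q_2(t^2)$, i.e.\ only if $(\ln q)_{,12}=0$. That is not gauge freedom; it is precisely the Einstein equation $q_{,12}=q_{,1}q_{,2}/q$ that the paper extracts in the step above, and your outline never produces it. Relatedly, your ``attractive alternative'' of postponing $\Lambda=0$ until the pp-wave form is in hand is circular in your own scheme, because reaching that normal form already uses the separability of $q$ and the vanishing of $\mathcal C$, which in the paper are derived \emph{together with} $\Lambda=0$ from the same subsystem of equations. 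The final verification that $\partial_{t^2}$ is null and covariantly constant is correct.
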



\begin{proof}
By assumption
$$
0 = C_\rho = 2 \frac{(\det H)_{,1} (\det H)_{,2}}{(\det H)^2 q}.
$$
Therefore,  $\det H$ is a function of either $t^1$ or $t^2$. We assume here that $\det H$ is a function of $t^1$.\\
On the other hand, since $h_{11}\neq 0$ can be always achieved by a linear change of coordinates 
$\{\bar z^{i}=\alpha^{i}_{j}\,z^{j}\}$, without loss of generality one can write $\mathbf{h}$ 
(the restriction of the metric to the Killing leaves $\Xi$) as 
$$
\mathbf{h}=h_{11}\left[  \left( dz^1 +\frac{h_{12}}{h_{11}}\,dz^2 \right)^2 +\frac{\left(h_{11}\,h_{22}-h_{12}^2\right)}{h_{11}^2}\, (dz^2)^2\right],
$$
i.e., in the  Weyl--Lewis--Papapetrou form \cite{Lew,Pa} 
$$
\mathbf{h}=\frac{r}{s}\left[  \left( dz^1 +w\,dz^2 \right)^2 \pm_H\,s^2\, (dz^2)^2\right],
$$
with
$$
w=\frac{h_{12}}{h_{11}},\qquad r = \sqrt{\mathop{\mathrm{det}}H}, \qquad s=\frac{r}{h_{11}}.
$$
In terms of  Weyl--Lewis--Papapetrou parameters $r,s,w$ the analysis of Einstein equations
$\mathbf L_{\mu\nu} = \fourRicci_{\mu\nu} - \Lambda \fourmetric_{\mu\nu} = 0$ simplifies noteworthy. 
Indeed, by computing the contravariant components $L^{\mu \nu}$, one finds that $ \mathbf L^{11}=0$ 
if and only if $s_{,2}^2 + w_{,2}^2=0$.
Since $w,s$ are real, it follows that they are functions of $t^1$.
Hence, all components $h_{ij}$ are functions of~$t^1$, which
substantially simplifies computation of the remaining components of $\mathbf L$.
In particular, we obtain
$$
0 = \mathbf L^{13} + f_2^1 \mathbf L^{12} = -\tfrac12 \mathcal C^3_{t^2},
\quad
0 = \mathbf L^{14} + f_2^2 \mathbf L^{12} = -\tfrac12 \mathcal C^4_{t^2}.
$$
Consequently, components of the curvature vector depend on $t^1$ only as well.
Continuing further, we obtain
$$
\begin{aligned}
0 &= \mathbf L_{33} + 3 h_{11} \mathbf L^{12}
 = \tfrac12 (\mathcal C^2)^2 \det H
  + \left(\frac{q_{,12}}{q^3} - \frac{q_{,1} q_{,2}}{q^2} \right) h_{11}
  + 4 \Lambda h_{11},
\\
0 &= \mathbf L_{34} + 3 h_{12} \mathbf L^{12}
 = \tfrac12 \mathcal C^3 \mathcal C^2 \det H
  + \left(\frac{q_{,12}}{q^3} - \frac{q_{,1} q_{,2}}{q^2} \right) h_{12}
  + 4 \Lambda h_{12},
\\
0 &= \mathbf L_{44} + 3 h_{22} \mathbf L^{12}
 = \tfrac12 (\mathcal C^3)^2 \det H
  + \left(\frac{q_{,12}}{q^3} - \frac{q_{,1} q_{,2}}{q^2} \right) h_{22}
  + 4 \Lambda h_{22}
\\
0 &= (4 q \det H) \mathbf L^{12}
  + h_{22} \mathbf L_{33} - 2 h_{12} \mathbf L_{34} + h_{11} \mathbf L_{44}
 = 2 \left(\frac{q_{,12}}{q^3} - \frac{q_{,1} q_{,2}}{q^2} + 3 \Lambda \right)
   \det H
\end{aligned}
$$
By the fourth equation,
$$
\frac{q_{,12}}{q^3} - \frac{q_{,1} q_{,2}}{q^2} = -3 \Lambda,
$$
then, by substituting into the remaining three equations and using
$(\mathcal C^3)^2 (\mathcal C^2)^2 = (\mathcal C^3 \mathcal C^2)^2$,
we obtain $\Lambda = 0$ and
\begin{equation}
\label{eq:hyp:q}
q_{,12} = \frac{q_{,1} q_{,2}}{q}.
\end{equation}
Hence $\mathcal C = 0$ and the metric $\fourmetric$ is orthogonally transitive.

On the other hand, equation~\eqref{eq:hyp:q} implies that $q(t^1,t^2)$ is a product,
$q = q_1(t^1) q_2(t^2)$. Therefore, by passing to new coordinates $\bar t^i = \int q_i\,dt^i$, 
the orbit metric reduces to $d\bar t^2\,d \bar t^2$ and the Einstein equations reduce to a 
single ordinary differential equation. Hence, by suitably rearranging the unknown functions, 
one can write the metric $\fourmetric$ and the corresponding Einstein equations in the form 
\eqref{pp-wave1} and  \eqref{eq_pp-wave1}, respectively.
The case when $\det H$ depends on $t^2$ is completely analogous.
\end{proof}

Obviously, the three Killing fields commute and, therefore, the metric has no unique two-dimensional commuting Killing algebra. Hence, it actually falls outside the class of metrics considered in this paper.


\subsection{ The case when $\tilde{\mathbf{g}}$ is Riemannian and explicit solutions with $C_\rho=0$}

In the case of conformally Euclidean orbit metric, we have
$\orbitmetric = q (\diff t^1)^2 + q (\diff t^2)^2$
and, therefore,
\begin{equation}
\label{gg xy ell}
\fourmetric = q ((\diff t^1)^2 + (\diff t^2)^2)
 + h_{kl} (\diff z^k + \p1k \diff t^1 + \p2k \diff t^2)(\diff z^l + \p1l \diff t^1 + \p2l \diff t^2).
\end{equation}
where $q, \p{i}{k}, h_{kl}$ are the unknown functions of $x,y$.
Clearly, $\mathop{\mathrm{det}}H < 0$.



\begin{prop} \label{prop:Eucl Crho nonzero}

Let the metric \eqref{eq:gG} be such that $C_\rho \ne 0$, with $\det \tilde{\mathbf{g}} > 0$ and  $\det H < 0$. Then, by writing the orbit metric in the conformally flat form
$$
\tilde{\mathbf{g}} = \left(
\begin{array}{@{}cc@{}}
q & 0 \\ 0 & q
\end{array}
\right),
\quad q=q(t^1,t^2) \ne 0.
$$
the $\Lambda$-vacuum Einstein equations
$\fourRicci_{\mu\nu} - \Lambda \fourmetric_{\mu\nu} = 0$
are equivalent to the compatible system of matrix and scalar equations

\begin{equation}
\label{EiEq ell}
\begin{gathered}
(r H_{,1} H^{-1})_{,1} + (r H_{,2} H^{-1})_{,2}
 = 2 \Lambda q r E + \frac{q}{r} A^\top A H^{-1},
\\
\left(\ln \frac{q}{r_{,1}^2 + r_{,2}^2}\right)_{,1} = - \frac{r_{,11} + r_{,22}}{r_{,1}^2 + r_{,2}^2} r_{,1}
 + \frac{\mathop{\mathrm{det}}(H_{,1}) - \mathop{\mathrm{det}}(H_{,2})}{r_{,1}^2 + r_{,2}^2} \frac{r_{,1}}{2 r}
 + \frac{(\tr H)_{,1} (\tr H)_{,2} - \tr (H_{,1} H_{,2})}{r_{,1}^2 + r_{,2}^2} \frac{r_{,2}}{2r} ,
\\
\left(\ln \frac{q}{r_{,1}^2 + r_{,2}^2}\right)_{,2} = - \frac{r_{,11} + r_{,22}}{r_{,1}^2 + r_{,2}^2} r_{,2}
 + \frac{\mathop{\mathrm{det}}(H_{,2}) - \mathop{\mathrm{det}}(H_{,1})}{r_{,1}^2 + r_{,2}^2} \frac{r_{,2}}{2 r}
 + \frac{(\tr H)_{,1} (\tr H)_{,2} - \tr (H_{,1} H_{,2})}{r_{,1}^2 + r_{,2}^2} \frac{r_{,1}}{2r} ,
  \\ F_{1,2} - F_{2,1}=\frac{q}{r}A\,H^{-1},
\end{gathered}
\end{equation}
where $r = \sqrt{-\mathop{\mathrm{det}}H}$, $E$ is the unit $2 \times 2$ matrix and
$$
A = (\begin{array}{@{}cc@{}}
a_1 & a_2 \end{array}),
\quad a_i = \text{const}
$$
is an arbitrary constant row vector, which  is zero if and only if\/ $\Xi^{\perp}$ is completely integrable.
Moreover,
$$
r_{,11} + r_{,22} = - 2 \Lambda q r + \frac{q}{2 r} A H^{-1} A^\top
$$
as a consequence of the first equation of \eqref{EiEq ell}.
\end{prop}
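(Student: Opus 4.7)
The plan is to follow the pattern of Proposition~\ref{prop:Lorentz Crho nonzero} verbatim, with the single structural change that the orbit metric is now conformally Euclidean rather than conformally Minkowskian. Since every non-degenerate two-dimensional metric is conformally flat, and since $\det\tilde{\mathbf g}>0$ forces the signature of $\tilde{\mathbf g}$ to be definite, we may bring $\tilde{\mathbf g}$ to the form $\mathrm{diag}(q,q)$ in suitable adapted coordinates. In these coordinates a direct computation gives
\[
C_\rho = \frac{((\det H)_{,1})^2+((\det H)_{,2})^2}{(\det H)^2\,q}
       = \frac{4(r_{,1}^2+r_{,2}^2)}{r^2\,q},
\]
so the hypothesis $C_\rho\ne 0$ translates into $r_{,1}^2+r_{,2}^2\ne 0$, i.e.\ $r=\sqrt{-\det H}$ is non-constant. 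This replaces the condition $(\det H)_{,1}(\det H)_{,2}\ne 0$ used in the Lorentzian case.

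Next, I would substitute the ansatz \eqref{gg xy ell} into $\fourRicci_{\mu\nu}-\Lambda \fourmetric_{\mu\nu}=0$ and solve the components for the top-order derivatives $H_{,11}+H_{,22}$, $P_{,1}$, $P_{,2}$ and $q_{,1}$, $q_{,2}$. This is a mechanical but lengthy computation, best carried out in computer algebra. It will produce, in analogy with \eqref{EiEq hyp H}--\eqref{EiEq hyp q}, one matrix equation
\[
(rH_{,1}H^{-1})_{,1}+(rH_{,2}H^{-1})_{,2}=2\Lambda q r E+\frac{q}{r}P^\top P\,H^{-1},
\]
two scalar equations determining $(\ln q)_{,1}$ and $(\ln q)_{,2}$ (matching the second and third equations of \eqref{EiEq ell} once $P$ is expressed via $A$), and two vector equations of the form $P_{,i}=\alpha_i P$ for coefficients $\alpha_i$ built from $r,q$ and traces of $H_{,i}H^{-1}H_{,j}H^{-1}$.

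The key algebraic step is then to integrate the vector equations for $P$. Exactly as in the Lorentzian case, one finds by direct comparison that $\alpha_i=(\ln q)_{,i}-(\ln r)_{,i}$ for $i=1,2$, which is equivalent to the statement that $(\ln(rP/q))_{,i}=0$. Hence $rP/q$ is a constant row vector $A=(a_1,a_2)$, giving $P=(q/r)A$; recalling the definition of $P$, this is the last equation of \eqref{EiEq ell}. Observing that $P=0$ iff $\mathcal C=0$ iff $\Xi^\perp$ is integrable identifies $A=0$ with the orthogonally transitive case. Finally, taking the trace of the matrix equation (multiplied by $H^{-1}$) and simplifying yields the stated constraint $r_{,11}+r_{,22}=-2\Lambda q r + (q/2r)\,A\,H^{-1}A^\top$, which is thus a differential consequence of the first equation.

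The main obstacle I anticipate is purely computational: unlike the hyperbolic case, where the single cross derivative $\partial_1\partial_2$ neatly separates variables, the elliptic Laplacian intertwines $\partial_1^2$ and $\partial_2^2$, and one must check that the pair of scalar $q$-equations is compatible, i.e.\ that $((\ln q)_{,1})_{,2}=((\ln q)_{,2})_{,1}$ on all solutions of the remaining equations. This should follow by differentiating the matrix equation and tracing, but the algebraic manipulations mixing $(\tr H)_{,1}(\tr H)_{,2}-\tr(H_{,1}H_{,2})$, $\det(H_{,i})$ and $r_{,ij}$ are the most delicate part of the argument.
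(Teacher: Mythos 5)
Your proposal follows the paper's own proof essentially verbatim: the same reduction of the Einstein equations to one matrix, two vector and two independent scalar equations, the same observation that the $P$-equations integrate to $rP/q=\text{const}$ (whence $P=(q/r)A$ and the identification of $A=0$ with orthogonal transitivity), and the same deferral of the bulk computation and of the compatibility check to routine (computer-algebra) verification, which is exactly what the paper does. The trace argument you give for the final relation on $r_{,11}+r_{,22}$ is also the intended one, so the proposal is correct and not a genuinely different route.
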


\begin{proof}
By assumption
$$
0 \ne C_\rho 
  = 4 \frac{r_{,1}{}^2 + r_{,2}{}^2}{q r^2}.
$$
Consequently, also $r_{,1}{}^2 + r_{,2}{}^2 \ne 0$.
Denote $\fourRicci$ the Ricci tensor of the metric~\eqref{gg xy ell}.
By tedious routine computations,  solving the Einstein equations
$\fourRicci_{\mu\nu} - \Lambda \fourmetric_{\mu\nu}$ with respect to
$H_{,22}, P_{,1}, P_{,2}, q_{,1}, q_{,2}, q_{,22}$, we obtain one $2\times2$ matrix equation
\begin{equation}
\label{EiEq ell H}
H_{,11} + H_{,22} - H_{,1} H^{-1} H_{,1} - H_{,2} H^{-1} H_{,2}
 + \frac12 \frac{(\mathop{\mathrm{det}}H)_{,1}}{\mathop{\mathrm{det}}H} H_{,1} + \frac12 \frac{(\mathop{\mathrm{det}}H)_{,2}}{\mathop{\mathrm{det}}H} H_{,2}
 + 2 \Lambda q H + \frac{1}{q} P^\top P = 0,
\end{equation}
two vector equations
\begin{equation}
\label{EiEq ell P}
P_{,1} = \left(\frac{q_{,1}}{q} - \frac12 \frac{(\mathop{\mathrm{det}}H)_{,1}}{\mathop{\mathrm{det}}H}\right) P,
\quad
P_{,2} = \left(\frac{q_{,2}}{q} - \frac12 \frac{(\mathop{\mathrm{det}}H)_{,2}}{\mathop{\mathrm{det}}H}\right) P,
\end{equation}
and three scalar equations
\begin{equation}
\label{EiEq ell q}
\begin{gathered}
(\mathop{\mathrm{det}}H)_{,2} \frac{q_{,1}}{q} + (\mathop{\mathrm{det}}H)_{,1} \frac{q_{,2}}{q}
 - 2 (\mathop{\mathrm{det}}H)_{,12} + \frac{(\mathop{\mathrm{det}}H)_{,1} (\mathop{\mathrm{det}}H)_{,2}}{\mathop{\mathrm{det}}H} + (\tr H)_{,1} (\tr H)_{,2} - \tr(H_{,1} H_{,2}) = 0,
\\
(\mathop{\mathrm{det}}H)_{,1} \frac{q_{,1}}{q} - (\mathop{\mathrm{det}}H)_{,2} \frac{q_{,2}}{q}
 + (\mathop{\mathrm{det}}H)_{,11} - (\mathop{\mathrm{det}}H)_{,22} - \frac{(\mathop{\mathrm{det}}H)_{,1}^2 + (\mathop{\mathrm{det}}H)_{,2}^2}{2\,\mathop{\mathrm{det}}H}
 - \mathop{\mathrm{det}}(H_{,1}) + \mathop{\mathrm{det}}(H_{,2}) = 0,
\\
(\ln q)_{,11} + (\ln q)_{,22} = \frac{\mathop{\mathrm{det}}(H_{,1}) + \mathop{\mathrm{det}}(H_{,2})}{2\,\mathop{\mathrm{det}}H} - \frac{3}{4 q} P H^{-1} P^\top.
\end{gathered}
\end{equation}
Again, the third equation~\eqref{EiEq ell q} is a differential consequence of the first two.
 
Conversely, if the five equations~\eqref{EiEq ell H}, \eqref{EiEq ell P} and~\eqref{EiEq ell q} hold, then $\fourRicci_{\mu\nu} = \Lambda \fourmetric_{\mu\nu}$.
Compatibility of the equations is routinely checked.

Again, $r P/q$ is a constant vector (recall that $r = \sqrt{-\mathop{\mathrm{det}}H}$) and we can write
$$
P = \frac{q}{r} A,
$$
where $A$ is a constant row vector.

The two scalar equations~\eqref{EiEq ell q} simplify to
$$
\begin{gathered}
r_{,2} \frac{q_{,1}}{q} + r_{,1} \frac{q_{,2}}{q}
 - 2 r_{,12} - \frac{(\tr H)_{,1} (\tr H)_{,2}}{2r} + \frac{\tr(H_{,1} H_{,2})}{2r} = 0,
\\
-r_{,1} \frac{q_{,1}}{q} + r_{,2} \frac{q_{,2}}{q}
 + r_{,11} - r_{,22}
 + \frac{\mathop{\mathrm{det}}(H_{,2}) - \mathop{\mathrm{det}}(H_{,2})}{2r} = 0
\end{gathered}
$$

Then the Einstein equations reduce to system~\eqref{EiEq ell}.
\end{proof}


\begin{prop} \label{prop:Eucl Crho zero} 
All $\Lambda$-vacuum Einstein metrics of the form \eqref{eq:gG}, with $C_\rho = 0$, $\det \tilde{\mathbf{g}} > 0$ and  $\det H < 0$, satisfy $\Lambda=0$ and in adapted coordinates can be written either in the form 
\begin{equation}
\label{pp-wave2}
\fourmetric = (d t^1)^2+(d t^2)^2+ \psi (dz^1)^2 +2\left(c\,t^1 d t^2+d z^2\right )\,d z^1,
\end{equation}
with $c\in\mathbb{R}$ and $\psi=\psi(t^1,t^2)$ a differentiable function such that $\psi_{,11}+\psi_{,22}=c^2$, or in the form
\begin{equation}
\label{pp-wave3}
\fourmetric = e^{t^1}(d t^1)^2+e^{t^1}(d t^2)^2+ \psi (dz^1)^2 +2\left(c\,e^{t^1} d t^2+d z^2\right )\,d z^1,
\end{equation}
with $c\in\mathbb{R}$ and $\psi=\psi(t^1,t^2)$ a differentiable function such that $\psi_{,11}+\psi_{,22}=e^{t^1}c^2$.\\
In particular, these Ricci-flat metrics are such that $\ell_{\mathcal{C}}=0$ and, in addition, are $pp$-waves since $\partial_{z^2}$ is a null Killing vector field such that $\nabla \partial_{z^2}=0$; moreover these metrics are orthogonally transitive if, and only if, $c=0$. 

\end{prop}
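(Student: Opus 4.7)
The plan is to adapt the argument of Proposition~\ref{prop:Lorentz Crho zero} to the Riemannian orbit signature, exploiting the explicit formula $C_\rho = 4(r_{,1}^2 + r_{,2}^2)/(qr^2)$ derived in the proof of Proposition~\ref{prop:Eucl Crho nonzero}. The hypothesis $C_\rho = 0$, combined with $q, r \neq 0$, forces $r_{,1} = r_{,2} = 0$, so that $r = \sqrt{-\det H}$ is a nonzero constant. This is a stronger conclusion than in the Lorentzian analogue (where $\det H$ merely depended on one coordinate) and drives the rest of the proof.

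Substituting $(\det H)_{,i} = 0$ into the Einstein system~\eqref{EiEq ell H}--\eqref{EiEq ell q} (whose derivation does not actually use $C_\rho \neq 0$), the vector equations~\eqref{EiEq ell P} collapse to $P_{,i} = (q_{,i}/q)\,P$, hence $P = qA$ for a constant row vector $A = (a_1, a_2)$, with $A = 0$ iff the metric is orthogonally transitive. Multiplying the matrix equation~\eqref{EiEq ell H} on the left by $H^{-1}$, taking the trace, and using the identity $\tr(H^{-1}H_{,ii}) - \tr(H^{-1}H_{,i}H^{-1}H_{,i}) = (\ln\det H)_{,ii} = 0$, I obtain the pointwise constraint $A H^{-1} A^\top = -4\Lambda$, so that the left-hand side, which depends on $H$, must be constant.

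Since $\det H < 0$, the restriction of $\fourmetric$ to the Killing leaves has signature $(1,1)$, so after a $\mathrm{GL}(2,\mathbb R)$ change of $z^1, z^2$ I may assume $\partial_{z^2}$ is null ($h_{22} = 0$); combined with $\det H = -r_0^2$ constant, this forces $h_{12}$ to be a nonzero constant (normalisable to $1$ by a rescaling of $z^2$), leaving $h_{11} = \psi$ as the only free component of $H$. The trace constraint above then reads $2 a_1 a_2 - a_2^2 \psi = -4\Lambda$; since $\psi$ is a non-constant function we must have $a_2 = 0$ and hence $\Lambda = 0$. Using the residual gauge freedom $\bar z^k = z^k + \psi^k(t^1,t^2)$ to eliminate the components of $F$ along $z^1$, the metric acquires the preliminary shape
$$
\fourmetric = q((dt^1)^2 + (dt^2)^2) + 2\mu_i\,dt^i\,dz^1 + \psi\,(dz^1)^2 + 2\,dz^1\,dz^2,
$$
where $\mu_i\,dt^i$ encodes the remaining non-transitivity coming from $a_1$. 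A direct computation then shows $\nabla \partial_{z^2} = 0$, confirming the $pp$-wave character.

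The last step is to classify the admissible conformal factors $q$ by solving the residual scalar equations of~\eqref{EiEq ell q} together with the determination of $\mu_i$. This is where I expect the main obstacle: in the Lorentzian analogue the equation $q_{,12} = q_{,1} q_{,2}/q$ immediately gave $q = q_1(t^1)\,q_2(t^2)$, absorbable by a coordinate change, whereas the Euclidean analogue involves the Laplacian $(\ln q)_{,11} + (\ln q)_{,22}$ and does not separate cleanly. After exhausting the residual conformal freedom in $(t^1, t^2)$, I would expect the classification to yield exactly two normal forms, $q = 1$ with $\mu_i\,dt^i = c\,t^1\,dt^2$ (giving \eqref{pp-wave2}) and $q = e^{t^1}$ with $\mu_i\,dt^i = c\,e^{t^1}\,dt^2$ (giving \eqref{pp-wave3}), in both cases with the residual Einstein equation for $\psi$ reducing to the stated Poisson equation $\psi_{,11}+\psi_{,22} = c^2$ or $\psi_{,11}+\psi_{,22} = c^2 e^{t^1}$ respectively, and with $c = 0$ recovering orthogonal transitivity as required.
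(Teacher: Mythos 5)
Your opening moves are sound and match the paper's: $C_\rho=0$ together with $q\neq 0$ forces $\det H$ to be constant, the vector equations then give $P=qA$ with $A$ constant, and the trace of $H^{-1}$ times the matrix equation (using $\tr(H^{-1}H_{,ii})-\tr(H^{-1}H_{,i}H^{-1}H_{,i})=(\ln\det H)_{,ii}=0$) yields the pointwise constraint $A H^{-1}A^\top=\mathrm{const}$. But the next step contains a genuine error: you claim that, because the Killing-leaf metric has signature $(1,1)$ and constant determinant, a $\mathrm{GL}(2,\mathbb R)$ change of $z^1,z^2$ brings $H$ to the form $h_{22}=0$, $h_{12}=1$, $h_{11}=\psi$. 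The pseudogroup only permits \emph{constant} matrices $\alpha^i_j$, while the null directions of $H(t^1,t^2)$ may rotate from point to point (e.g.\ $h_{11}=-h_{22}=\cos\theta(t)$, $h_{12}=\sin\theta(t)$ has $\det H=-1$ but admits no constant null frame). The fact that the null direction \emph{is} constant is itself a consequence of the Einstein equations: the paper extracts from $\mathbf L^{12}=0$ and $\mathbf L^{11}-\mathbf L^{22}=0$ the algebraic conditions $s_{,1}s_{,2}=w_{,1}w_{,2}$ and $s_{,1}^2-s_{,2}^2=w_{,1}^2-w_{,2}^2$ on the Weyl--Lewis--Papapetrou parameters, whose only real solutions give $s=\pm w+c_1$, and only then does $H$ become congruent to $\bigl(\begin{smallmatrix}\psi&1\\1&0\end{smallmatrix}\bigr)$ by a constant matrix. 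Your subsequent deduction of $\Lambda=0$ also leans on the unproven assumption that $\psi$ is non-constant; the paper instead gets $\Lambda=0$ directly from $\mathbf L_{34}=0$ after the normalisation.

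The second gap is that the classification into exactly the two normal forms \eqref{pp-wave2} and \eqref{pp-wave3} is not actually carried out -- you explicitly defer it and only state what you ``expect''. The missing idea is that, once $H$ is in the normalised form, the component $\mathbf L^{11}=0$ reduces to $q_{,11}+q_{,22}=(q_{,1}^2+q_{,2}^2)/q$, i.e.\ $\phi=\ln q$ is harmonic; the dichotomy is then between $\phi$ constant (after rescaling, $q=1$, giving \eqref{pp-wave2}) and $\phi$ non-constant, in which case $\phi$ and its conjugate harmonic function can be taken as new isothermal coordinates, forcing $q=e^{t^1}$ and giving \eqref{pp-wave3}; the equations $L^{14}=L^{24}=0$ then give $\mathcal C^4=\mathrm{const}$, producing the parameter $c$, and the residual equation is the stated Poisson equation for $\psi$. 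A minor further caveat: the system \eqref{EiEq ell H}--\eqref{EiEq ell q} was derived in Proposition~\ref{prop:Eucl Crho nonzero} under the hypothesis $C_\rho\neq0$, and the step of solving the Einstein equations for $q_{,1},q_{,2}$ inverts a matrix whose determinant is proportional to $(\det H)_{,1}^2+(\det H)_{,2}^2$; so the equivalence of that system with $\fourRicci_{\mu\nu}=\Lambda\fourmetric_{\mu\nu}$ cannot simply be imported to the degenerate case, which is presumably why the paper reverts to the raw components $\mathbf L_{\mu\nu}$, $\mathbf L^{\mu\nu}$ in this proof.
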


\begin{proof} 
By assumption
$$
0 = C_\rho = \frac{(\det H_{,1})^2 + (\det H_{,2})^2}{q\,(\det H)^2}.
$$
Consequently, $\det H$ is a constant. On the other hand, by considering $\mathbf h$ in the  
Weyl--Lewis--Papapetrou form, like in the proof of Proposition \ref{prop:Lorentz Crho zero}, the 
analysis of Einstein equations $\mathbf L_{\mu\nu} = \fourRicci_{\mu\nu} - \Lambda \fourmetric_{\mu\nu} = 0$
simplifies noteworthy. Indeed, in terms of the Weyl--Lewis--Papapetrou parameters $r,s,w$, one has 
$\det H = -r^2$ and, without loss in generality, one can assume $r=1$, because this can always be achieved 
by a coordinate transformation $z^{i}\rightarrow z^{i}/\sqrt{|r|}$, $i=1,2$, and rearranging the sign of 
$s$ whenever $r<0$.  Moreover, by equating contravariant components $\mathbf L^{12}$ and 
$\mathbf L^{11} - \mathbf L^{22}$ to zero modulo
$\det H = \mathrm{const}$, we obtain
$$
\begin{aligned}
0 
 = h_{11,2} h_{22,1} - 2 h_{12,1} h_{12,2} + h_{11,1} h_{22,2}
 &= s_{,1} s_{,2} - w_{,1} w_{,2},
\\
0 
 = h_{11,1} h_{22,1} - h_{12,1}{}^2 + h_{12,2}{}^2 - h_{11,2} h_{22,2}
 &= s_{,1}{}^2 - s_{,2}{}^2 - w_{,1}{}^2 + w_{,2}{}^2,
\end{aligned}
$$
The latter algebraic system has two real solutions
$$
s_{,1} = \pm w_{,1}, \quad s_{,2} = \pm w_{,2}
$$
and also a complex solution
$s_{,2} = \mathrm i\, w_{,1}$, $s_{,1} = -\mathrm i\, w_{,2}$,
which gives $s = \mathrm{const}$, $w = \mathrm{const}$ as the unique real subcase.
Altogether we obtain
$$
s = \pm w + c_1, \quad r = 1,
$$
Hence
$$
H = \left(\begin{array}{@{}cc@{}}
\displaystyle{\frac{1}{\pm w+c_1}} & \displaystyle{ \frac{w}{\pm w+c_1}} \vspace{10pt}\\
\displaystyle{ \frac{w}{\pm w+c_1}} & \displaystyle{\frac{-c_1^2\mp 2c_1 w}{\pm w+c_1}}
\end{array}\right)
\sim
\left(\begin{array}{@{}cc@{}}
\displaystyle{\frac{1}{\pm w+c_1}} & 1 \vspace{10pt}\\
1 & 0
\end{array}\right),
$$
where the matrix congruence $H \sim Q^\top H Q$ is with respect to
the transition matrix
$$
Q = 
\left(\begin{array}{@{}cc@{}}
\pm1 & \pm c_1 \\
0 & 1
\end{array}\right).
$$
Otherwise said, we can take 
$$
H = \left(\begin{array}{@{}cc@{}}
\psi & 1\\
1 & 0
\end{array}\right)
$$
with $\psi=\psi(t^1,t^2)$ a differentiable function.
This simplifies $\mathbf L$ further.
From $\mathbf L^{44} = 0$ we get $\mathcal C^3 = 0$, i.e., ${f_{1,2}^{1}}-{f_{2,1}^{1}}=0$, and by $\mathbf L_{34} = 0$ we get $\Lambda = 0$. Then
$\mathbf L^{11} = 0$ is equivalent to
$q_{,11} + q_{,22} = (q_{,1}{}^2 + q_{,2}{}^2)/q$, which transforms to
the Laplace equation $\phi_{,11} + \phi_{,22} = 0$ under $q = e^\phi$, and by $L^{14}=L^{24}=0$ one gets that $\mathcal C^4 = \text{const}$, i.e., $f_{2,1}^2 - f_{1,2}^2= c e^\phi$, with $c\in\mathbb{R}$.
It follows that the remaining equations $L^{\mu \nu}=0$ are satisfied if, and only if, 
$$
\psi_{,11} + \psi_{,22} = c^2 e^\phi.
$$
Thus, when $\phi=c_0$, $c_0\in\mathbb{R}$, one has
$$
f_1^1=\phi_{21,1}+\phi_1,\quad f_1^2=\phi_{12,1},\quad f_2^1=\phi_{21,2},\quad f_2^2=\phi_{12,2}+c\,e^{c_0}\,t^1+\phi_2,
$$
with $\phi_{12}=\phi_{12}(t^1,t^2)$, $\phi_{21}=\phi_{21}(t^1,t^2)$, $\phi_{1}=\phi_{1}(t^1)$, $\phi_{2}=\phi_{2}(t^2)$ arbitrary differentiable functions and, by choosing new adapted coordinates
$$
\bar {t}^1=e^{c_0/2}t^1,\quad \bar {t}^2=e^{c_0/2}t^2,\quad \bar {z}^1=\phi_{21}+\phi_1+z^1,\quad \bar {z}^2=\phi_{12}+\phi_2+z^2,
$$
we get
$$
\fourmetric = (d \bar{t}^1)^2+(d \bar{t}^2)^2+ \psi (d \bar{z}^1)^2 +2(c\,\bar{t}^1 d \bar{t}^2+d \bar{z}^2)\,d \bar{z}^1,
$$
where $\partial_{\bar{t}^1}^2\psi + \partial_{\bar{t}^2}^2\psi = c^2$.\\
On the other hand,  if $\phi$ is non-constant, then, being a harmonic function, $\phi$ can be chosen for $t^1$ and the conjugate harmonic function for $t^2$. Then, one has
$$
f_1^1=\phi_{21,1}+\phi_1,\quad f_1^2=\phi_{12,1},\quad f_2^1=\phi_{21,2},\quad f_2^2=\phi_{12,2}+c\,e^{t^1}+\phi_2,
$$
with $\phi_{12}=\phi_{12}(t^1,t^2)$, $\phi_{21}=\phi_{21}(t^1,t^2)$, $\phi_{1}=\phi_{1}(t^1)$, $\phi_{2}=\phi_{2}(t^2)$ arbitrary differentiable functions and, by choosing new adapted coordinates
$$
\bar {t}^1=t^1,\quad \bar {t}^2=t^2,\quad \bar {z}^1=\phi_{21}+\phi_1+z^1,\quad \bar {z}^2=\phi_{12}+\phi_2+z^2,
$$
we get
$$
\fourmetric = e^{\bar{t}^1}(d \bar{t}^1)^2+e^{\bar{t}^1}(d \bar{t}^2)^2+ \psi (d \bar{z}^1)^2 +2(c\,e^{\bar{t}^1} d \bar{t}^2+d \bar{z}^2)\,d \bar{z}^1,
$$
where $\partial_{\bar{t}^1}^2\psi + \partial_{\bar{t}^2}^2\psi =e^{\bar{t}^1} c^2$.

In any case these Ricci-flat metrics are $pp$-waves since $\partial/\partial \bar{z}^2$ is a covariantly constant and null Killing vector.

\end{proof}






\subsection{Exact solutions in the case when $\ell_{\mathcal{C}}=0$} 

In the paper \cite{Kun} Kundu looked for solutions of the vacuum Einstein equations satisfying the condition $h^{kl} c_k c_l = 0$, where the scalars
$$
c_i = \epsilon^{\alpha\beta\rho\sigma} \xi_{(1)\alpha} \xi_{(2)\beta} \xi_{(i)\rho;\sigma}
$$
measure the orthogonal intransitivity
(cf.~\cite{Ger1}), and  $\xi_{(i)}=\partial_{z^i}$, $i=1,2$, are the Killing vectors. Kundu presented all solutions satisfying this condition, but without proof.
We reconstruct the proof below and extend his result to $\Lambda$-vacuum metrics.

We first notice that the Kundu condition $h^{kl} c_k c_l = 0$ is equivalent to 
$\mathbf{c}^{\alpha}_{12} \mathbf{c}^{\beta}_{12} \fourmetric_{\alpha\beta}
 = \mathbf{c}^{k^*}_{12} \mathbf{c}^{l^*}_{12} h_{kl} = 0$, i.e., 
in invariant terms,
$$
\ell_{\mathcal C} = 0.
$$


\begin{lem} \label{prop:inv through A}
When $C_\rho \ne 0$, the semi-invariant vector field $\mathcal C$ and the invariant $\ell_{\mathcal C}$ can be written as 
$$
\mathcal C = \mathop{\mathrm{sgn}}(q) \frac{1}{\sqrt{\mp\mathop{\mathrm{det}}H}} A H^{-1}, \qquad 
 \ell_{\mathcal C} = \frac{1}{\mp\mathop{\mathrm{det}}H} A H^{-1} A^\top,
$$
where $A$ is the constant vector introduced in Propositions \ref{prop:Lorentz Crho nonzero} and 
\ref{prop:Eucl Crho nonzero}.
Moreover, under  transformations $\partial/\partial z^{j}=\alpha_{j}^{i}\,\partial/\partial\bar{z}^{\,i}$,
with $(\alpha_{j}^{i})\in \mathrm{GL}(2, \mathbb{R})$, we have 
\begin{equation}\label{C sym}
H \to \alpha^\top H \alpha, \quad A \to A \alpha, \quad q \to q, \quad P \to P
\end{equation}
and, whenever $A$ is nonzero, it can be always normalised to any prescribed nonzero vector by 
means of transformation~\eqref{C sym}.
\end{lem}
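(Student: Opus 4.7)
The plan is to derive the explicit formulas for $\mathcal{C}$ and $\ell_{\mathcal{C}}$ from the key identity $P = (q/r)A$ supplied by Propositions~\ref{prop:Lorentz Crho nonzero} and~\ref{prop:Eucl Crho nonzero}, and then to trace the transformation rules through the formulas~\eqref{cond_equiv} of Proposition~\ref{prop:pseudogroup}.

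First I would observe that the conformally flat parametrisation of the orbit metric used in Propositions~\ref{prop:Lorentz Crho nonzero} and~\ref{prop:Eucl Crho nonzero} gives $\det\tilde{\mathbf g} = \pm q^2$ in both signatures, so $\sqrt{|\det\tilde{\mathbf g}|} = |q|$. Formula~\eqref{curv-vect} then recasts $\mathcal{C}$ as the row vector $(F_{1,2}-F_{2,1})/|q|$. Right-multiplying by $H$ and invoking both the definition $P = (F_{1,2}-F_{2,1})H$ and $P = (q/r)A$ yields $\mathcal{C} H = \mathop{\mathrm{sgn}}(q)\,A/r$, whence the advertised expression $\mathcal{C} = \mathop{\mathrm{sgn}}(q)\,AH^{-1}/\sqrt{\mp\det H}$. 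The invariant $\ell_{\mathcal{C}} = h_{kl}\,\mathcal{C}^k\mathcal{C}^l = \mathcal{C} H \mathcal{C}^\top$ then collapses to $AH^{-1}A^\top/(\mp\det H)$, using symmetry of $H^{-1}$ and $(\mathop{\mathrm{sgn}} q)^2 = 1$.

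Next I would obtain the transformation rules by specialising~\eqref{cond_equiv} to $\phi^i=t^i$, $\psi^i=0$, so that $\tilde{\mathbf g}$, $q$, and the conformally flat normalisation are untouched. The third line of~\eqref{cond_equiv} directly supplies the rule for $H$; the second line fixes the rule for each $F_i$, hence for the antisymmetric combination $F_{1,2}-F_{2,1}$ and therefore for $P$. Combined with the induced rescaling of $r = \sqrt{\mp\det H}$ by a power of $\det\alpha$, this determines the rule for $A = rP/q$ and reproduces~\eqref{C sym}. The only care needed is bookkeeping of the direct-versus-inverse convention for the matrix action of $\alpha$; no deep calculation is required. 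Finally, for the normalisation claim, the right action $A\mapsto A\alpha$ of $GL(2,\mathbb R)$ on $\mathbb R^2\setminus\{0\}$ is transitive, so for any nonzero $A$ and any prescribed nonzero target $A_0$ there exists $\alpha\in GL(2,\mathbb R)$ with $A\alpha = A_0$.
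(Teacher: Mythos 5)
Your proposal is correct and takes essentially the same route as the paper, whose entire proof of this lemma is the remark that everything "is easily checked using Propositions \ref{prop:Lorentz Crho nonzero} and \ref{prop:Eucl Crho nonzero}": you obtain $\mathcal C=\mathop{\mathrm{sgn}}(q)\,AH^{-1}/r$ by combining $P=(q/r)A$ with $\sqrt{|\det\tilde{\mathbf g}|}=|q|$ and $P=(F_{1,2}-F_{2,1})H$, and then read off $\ell_{\mathcal C}=\mathcal C H\mathcal C^{\top}$, exactly as intended. The one spot where your "bookkeeping" caveat genuinely matters is the rule for $A$: tracing \eqref{cond_equiv} with $\phi^i=t^i$, $\psi^i=0$ yields $A\mapsto A\alpha$ only up to the positive scalar $|\det\alpha|$ produced by $r=\sqrt{\mp\det H}$ (and correspondingly $P$ picks up a factor of $\alpha$ rather than staying fixed), but this extra scalar does not obstruct the transitivity argument, so the normalisation conclusion stands.
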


\begin{proof}
This is easily checked using Propositions \ref{prop:Lorentz Crho nonzero} and \ref{prop:Eucl Crho nonzero}.
\end{proof}

Then  the $\Lambda$-vacuum Einstein metrics with $\ell_{\mathcal{C}}=0$ are described by the following

\begin{thm}
Every Lorentzian $\Lambda$-vacuum metric of the form \eqref{eq:gG} that satisfies the 
condition $\ell_{\mathcal C} = 0$ has one of the following forms:

{\rm 1.}  $pp$-waves with $C_\rho = 0$, described by 
Propositions~\ref{prop:Lorentz Crho zero} and~\ref{prop:Eucl Crho zero};


{\rm 2.} Petrov type II vacuum metrics of Kundu~{\rm\cite{Kun}}
\begin{equation}
\label{eq:Kundu solution}
\frac1{\sqrt{x}} \bigl(dx^2 + dy^2 + (x^{3/2} \psi + 1)\,du^2 + 2\,dy\,du\bigr),
\end{equation}
where $\psi$ solves the cylindrical Laplace equation
$\psi_{xx} - 2 \psi_x/x + \psi_{yy} = 0$;


{\rm 3.} Petrov type II $\Lambda$-vacuum metrics
\begin{equation}
\label{eq:Lambda Kundu solution}
\frac{3}{\Lambda} \frac{c^2 r}{c^2 r^3 + 1} \,dr^2 + \frac{c^2 r^3 + 1}{r}\,dy^2
+ \frac{2}{r}\,dy\,du  
+ \frac{r^3 \psi + 1}{r} \,du^2
+ \frac{2}{r^2}\,du\,dv
\end{equation}
where $c,\Lambda$ are nonzero constants and $\psi(r,y)$ is a solution of the 
separable linear equation 
\begin{equation}
\label{eq:Lambda Kundu equation}
\frac{(c^2 r^3 + 1)^2}{r^2} \psi_{rr}
 + \frac{(c^2 r^3 + 1)(4 c^2 r^3 + 1)}{r^3} \psi_r
+ 3 \frac{c^2}{\Lambda} \psi_{yy} = 0;
\end{equation}


{\rm 4.} Petrov type III $\Lambda$-vacuum metrics
\begin{equation}
\label{eq:Lambda Kundu solution c = 0}
\frac{3}{\Lambda  x^2}\,dx^2 + \frac1{x^2}dy^2
+ 2 x\,dy\,du  
+ \frac{2}{x^2}\,du\,dv
+ \frac{x^6 + \psi}{2  x^2} \,du^2
\end{equation}
where $\Lambda \ne 0$ and $\psi(x,y)$ satisfies the cylindrical Laplace equation
\begin{equation} \label{eq:Lambda Kundu equation c = 0}
\psi_{xx} - \frac{2}{x} \psi_x + \frac3{\Lambda} \psi_{yy} = 0.
\end{equation}
\end{thm}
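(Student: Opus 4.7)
The plan is to split the analysis according to whether $C_\rho$ vanishes, reducing to previously established results when it does, and to the reduced Einstein equations of Proposition~\ref{prop:Eucl Crho nonzero} otherwise. Throughout I keep in mind that the paper focuses on the orthogonally intransitive setting, so the subcase $\mathcal C \equiv 0$ (orthogonally transitive) may be set aside as belonging to a separately-studied class.

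\emph{Case $C_\rho = 0$.} Locally either $\det\tilde{\mathbf g} < 0$ or $\det\tilde{\mathbf g} > 0$; Propositions~\ref{prop:Lorentz Crho zero} and~\ref{prop:Eucl Crho zero} apply and force $\Lambda = 0$ together with one of the $pp$-wave normal forms~\eqref{pp-wave1}, \eqref{pp-wave2}, \eqref{pp-wave3}. In each of them $\mathcal C$ is either zero or null, so $\ell_{\mathcal C} = 0$ automatically. This yields case~1 of the statement.

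\emph{Case $C_\rho \ne 0$.} By Lemma~\ref{prop:inv through A}, the invariant expression $\ell_{\mathcal C} = \tfrac{1}{\mp\det H}\, A H^{-1} A^\top$ shows that $\ell_{\mathcal C} = 0$ is equivalent to the algebraic condition $A H^{-1} A^\top = 0$. If $A = 0$ then $\mathcal C \equiv 0$, which is the orthogonally transitive situation excluded by the standing hypothesis; so $A \ne 0$. The null condition then forces $H^{-1}$ (hence $H$) to be indefinite, i.e.\ $\det H < 0$, and since $\fourmetric$ is Lorentzian this places us in the Euclidean orbit-metric setting of Proposition~\ref{prop:Eucl Crho nonzero}. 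I next exploit the $\mathrm{GL}(2,\mathbb R)$ freedom in~\eqref{C sym} to normalise $A = (a,0)$ with $a \ne 0$; the null condition becomes $(H^{-1})_{11} = 0$, equivalently $h_{22} = 0$, so $H = \bigl(\begin{smallmatrix} h_{11} & h_{12} \\ h_{12} & 0 \end{smallmatrix}\bigr)$ with $\det H = -h_{12}^2$. Introducing $r = |h_{12}|$ (already the scalar appearing in~\eqref{EiEq ell}) and letting $h_{11}$ play the role of the auxiliary function $\psi$, drastically simplifies the reduced system.

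\emph{Main obstacle and closure.} The principal work is to integrate the reduced $\Lambda$-vacuum system~\eqref{EiEq ell} under this ansatz. I expect the off-diagonal and trace parts of the matrix equation to decouple: together with the last equation in~\eqref{EiEq ell} they should determine $r$, the conformal factor $q$, and the one-forms $F_1,F_2$ up to a single integration constant $c$ (the one visible in~\eqref{eq:Lambda Kundu solution}); the remaining linear second-order equation then prescribes $\psi = \psi(r,y)$ as a cylindrical-Laplace-type harmonic function. A secondary branching on $\Lambda = 0$ versus $\Lambda \ne 0$, and within the latter on $c \ne 0$ versus $c = 0$, produces the three families: Kundu's original vacuum metric~\eqref{eq:Kundu solution} when $\Lambda = 0$, the new Petrov~II family~\eqref{eq:Lambda Kundu solution}--\eqref{eq:Lambda Kundu equation} when $\Lambda c \ne 0$, and the degenerate Petrov~III family~\eqref{eq:Lambda Kundu solution c = 0}--\eqref{eq:Lambda Kundu equation c = 0} when $c = 0$ but $\Lambda \ne 0$. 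The Petrov type assertions are verified at the end by a direct computation of the Weyl scalars in a suitable null tetrad, which is mechanical once the metric is in closed form.
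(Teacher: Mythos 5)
Your overall architecture coincides with the paper's: split on $C_\rho=0$ versus $C_\rho\ne 0$; in the first case invoke Propositions~\ref{prop:Lorentz Crho zero} and~\ref{prop:Eucl Crho zero}; in the second use Lemma~\ref{prop:inv through A} to turn $\ell_{\mathcal C}=0$ into the algebraic condition $A H^{-1}A^\top=0$ with $A\ne 0$, conclude $\det H<0$ and hence (by Lorentzian signature) that the orbit metric is definite, and then integrate the reduced elliptic system. Your normalisation $A=(a,0)$, forcing $h_{22}=0$, is a legitimate and slightly cleaner substitute for the paper's route through Weyl--Lewis--Papapetrou parameters, where the same condition appears as $w=\pm s+a_2/a_1$; and your observation that the relevant proposition is the Euclidean one (Proposition~\ref{prop:Eucl Crho nonzero}) actually corrects what appears to be a mislabelled reference in the paper's own proof.

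The genuine gap is in your ``Main obstacle and closure'' paragraph: the entire analytic content of the theorem --- proving that the reduced system has \emph{no solutions other than} the three listed families --- is replaced by ``I expect \dots to decouple'' and ``should determine''. Concretely, what is missing and cannot be waved through: (i) after the ansatz, the matrix equation yields $r_{xx}+r_{yy}=-2\Lambda qr$; for $\Lambda\ne0$ one must eliminate $q$ and show that the resulting third-order equations on $r$ integrate to the one-parameter family $r^{1/2}(r_x^2+r_y^2)=(\tfrac23 r^{3/2}+c)(r_{xx}+r_{yy})$ --- this is where the constant $c$ actually enters, not from the $F_i$ equations as you suggest; (ii) one must use $C_\rho\ne 0$ to see that $r$ (respectively the associated harmonic function $\rho(r)=\int dr/(r^{3/2}+c)$) is non-constant and may be taken as a flat coordinate, which is what collapses the PDE system to ODEs plus one linear equation; (iii) the remaining equation for $s$ is a nonlinear elliptic equation, and exhibiting its general solution in the stated closed form requires the linearising substitution $S=1/s+a_1^2/(3\Lambda c\,r^{3/2})$ (with a separate treatment when $c=0$). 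Without these steps you have at best a verification that the listed metrics solve the equations, not a proof that they exhaust all solutions with $\ell_{\mathcal C}=0$, which is what the theorem asserts.
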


\begin{proof} 
The case of $C_\rho = 0$ was completed in Section 7, since all metrics found in 
Propositions~\ref{prop:Lorentz Crho zero} and~\ref{prop:Eucl Crho zero}
satisfy $\ell_{\mathcal C} = 0$.
Assume henceforth that $C_\rho \ne 0$, so that we can use 
Propositions~\ref{prop:Lorentz Crho nonzero} and~\ref{prop:Eucl Crho nonzero}.

We consider the $\Lambda$-vacuum Einstein equations
augmented with condition $\ell_{\mathcal C} = 0$.
According to Lemma~\ref{prop:inv through A},
the scalar invariant $\ell_{\mathcal C}$ equals
\begin{equation} \label{eq:AHA}
\ell_{\mathcal C} = \frac{1}{\mp\det H} A H^{-1} A^\top.
\end{equation}
Therefore, condition $\ell_{\mathcal C} = 0$ implies that the vector $\mathcal C$ 
is null with respect to the matrix $H^{-1}$; then necessarily $\det H < 0$,
so that Proposition~\ref{prop:Lorentz Crho nonzero} is applicable.
Rewriting $\Lambda$-vacuum Einstein equations in terms of 
Weyl--Lewis--Papapetrou parameters and renaming $t^1,t^2$ to $x,y$ for brevity, 
we obtain
\begin{equation}
\label{EiEq qrsw ell}
\begin{gathered}
r_{xx} = \frac{1}{4} \frac{r_x^2 - r_y^2}{r}
+ \frac{1}{4} r \frac{w_x^2 - w_y^2 - s_x^2 + s_y^2}{s^2} 
+ \frac{1}{2} \frac{r_x q_x - r_y q_y}{q}
+ \frac{1}{4} r q \ell_{\mathcal C}
- \Lambda  r q,\\
r_{xy} = \frac{1}{2} \frac{r_x r_y}{r}
+ \frac{1}{2} r \frac{w_x w_y - s_x s_y}{s^2} 
+ \frac{1}{2} \frac{r_x q_y + r_y q_x}{q}, \\
r_{yy} = \frac{1}{4} \frac{r_y^2 - r_x^2}{r}
+ \frac{1}{4} r \frac{w_y^2 - w_x^2 - s_y^2 + s_x^2}{s^2} 
+ \frac{1}{2} \frac{r_y q_y - r_x q_x}{q}
+ \frac{1}{4} r q \ell_{\mathcal C}
- \Lambda  r q, \\
q_{xx} + q_{yy} = \frac{q_x^2 + q_y^2}{q}
+ \frac{1}{2} q \frac{r_x^2 + r_y^2}{r^2}
+ \frac{1}{2} q \frac{w_y^2 + w_x^2 - s_y^2 - s_x^2}{s^2} 
- \frac{3}{2} q^2 \ell_{\mathcal C}, \\
s_{xx} + s_{yy} = -\frac{r_x s_x + r_y s_y}{r}
+ \frac{w_x^2 + w_y^2 + s_x^2 + s_y^2}{s} 
- \frac{1}{2} q \frac{a_1^2 (s^2 + w^2) - 2 a_1 a_2 w + a_2^2}{r^3}, \\
w_{xx} + w_{yy} = -\frac{r_x w_x + r_y w_y}{r}
+ 2 \frac{w_x s_x + w_y s_y}{s} 
- \frac{s q}{r^3} a_1 (a_1 w - a_2),
\end{gathered}
\end{equation}
where
\begin{equation}
\label{KC qrsw ell}
\ell_{\mathcal C} = \frac{a_1^2 (s^2 - w^2) + 2 a_1 a_2 w - a_2^2}{s r^3}
= \frac{(a_1 s - a_1 w + a_2) (a_1 s + a_1 w - a_2)}{s r^3},
\end{equation}
according to eq.~\eqref{eq:AHA}.
If system~\eqref{EiEq qrsw ell} is solved, then the components $f_i^j$ can 
be found from the underdetermined system
\begin{equation}
\label{kundu_fij}
f_{1,y}^1 - f_{2,x}^1 = (a_1 s^2 - a_1 w^2 + a_2 w) \frac{q}{s r^2}, \\ \quad
f_{1,y}^2 - f_{2,x}^2 = (a_1 w - a_2) \frac{q}{s r^2}.
\end{equation}
The condition $\ell_{\mathcal C} = 0$ implies that $a_1 \ne 0$, since
otherwise $a_1 = a_2 = 0$, contradicting the assumption that the metric is
non-orthogonally transitive.
Then one has $w = \pm s + a_2/a_1$. 
One can choose the upper sign without loss in generality since the equations 
are invariant with respect to the transformation $w \to -w$, $a_2 \to -a_2$, 
$f_i^2 \to -f_i^2$.
Thus, we let
$$
w = s + \frac{a_2}{a_1}.
$$
Equations~\eqref{EiEq qrsw ell} turn into
\begin{equation}
\label{Kundu system}
\begin{gathered}
r_{xx} = \frac{1}{4} \frac{r_x^2 - r_y^2}{r}
+ \frac{1}{2} \frac{r_x q_x - r_y q_y}{q} - \Lambda  r q, \\
r_{xy} = \frac{1}{2} \frac{r_x r_y}{r} + \frac{1}{2} \frac{r_x q_y + r_y q_x}{q}, \\
r_{yy} = \frac{1}{4} \frac{r_y^2 - r_x^2}{r} + \frac{1}{2} \frac{r_y q_y
- r_x q_x}{q} - \Lambda  r q, \\
q_{xx} + q_{yy} = \frac{1}{2} q \frac{r_x^2 + r_y^2}{r^2} + \frac{q_x^2 + q_y^2}{q}, \\
s_{xx} + s_{yy} = -\frac{r_x s_x + r_y s_y}{r} + 2 \frac{s_x^2 + s_y^2}{s} - \frac{a_1^2 q s^2}{r^3},
\end{gathered}
\end{equation}
whence, 
\begin{equation}
\label{Kundu r Lambda}
r_{xx} + r_{yy} = -2 \Lambda r q.
\end{equation}
Moreover, the system \eqref{kundu_fij} reduces to 
\begin{equation}
\label{Kundu sys f}
-f_{1,y}^1 + f_{2,x}^1 + a_2 \frac{q}{r^2} = 0, \quad
-f_{1,y}^2 + f_{2,x}^2 - a_1 \frac{q}{r^2} = 0.\ \\
\end{equation}
Now, system~\eqref{Kundu system} is preserved under the coordinate 
transformations (isometries of the orbit metric)
$$
x \to \tilde x, \quad y \to \tilde y, \quad q \to \tilde q
$$
($r,s,w$ being unchanged), where
 $\tilde x(x,y)$, $\tilde y(x,y)$
are arbitrary functionally independent conjugate harmonic functions, i.e.,
$\tilde x_x = \tilde y_y$, $\tilde x_y = -\tilde y_x$, and
$$
\tilde q = \frac q J, \quad 
J = \frac{\partial(\tilde x,\tilde y)}{\partial(x,y)} = \tilde x_x^2 + \tilde x_y^2 \ne 0.
$$

In order to reproduce Kundu's result, assume that $\Lambda = 0$.
Then $r$ is harmonic by equation~\eqref{Kundu r Lambda}.
Moreover, $r$ is non-constant, since otherwise $C_\rho = 8 r_{x} r_{y}/r^2 q = 0$, which we excluded at the
beginning of the proof.
Therefore, $r$ can be chosen for $\tilde x$. 
Transforming back to the coordinates $x,y$, we thus identify $r = x$. 
Next we put $a_1 = 3/2$, $s = 1/(S + x^{-3/2})$, and $u = z^1$, $v = z^2$
to get solution~\eqref{eq:Kundu solution}, which is
easily identified with the Kundu non-orthogonally transitive 
solution~\cite[eq.~(4), $\alpha = 1$]{Kun}.

To cover the remaining two cases, assume that $\Lambda \ne 0$.
Then we can express
$$
q = -\frac{r_{xx} + r_{yy}}{2 \Lambda r}
$$
and substitute back into system~\eqref{Kundu system}, obtaining two third-order 
equations on $r$.
These are equivalent to
$$
\left(\frac{r_x^2 + r_y^2}{r_{xx} + r_{yy}} r^{1/2} - \frac23 r^{3/2}\right)_x = 0, 
\quad
\left(\frac{r_x^2 + r_y^2}{r_{xx} + r_{yy}} r^{1/2} - \frac23 r^{3/2}\right)_y = 0, 
$$
i.e.,
\begin{equation}
\label{Kundu r}
r^{1/2} (r_x^2 + r_y^2) = (\tfrac23 r^{3/2} + c) ({r_{xx} + r_{yy}}),
\end{equation}
where $c$ is an arbitrary constant.
Equation~\eqref{Kundu r} is equivalent to 
\begin{equation}
\label{rho harmonic}
\rho_{xx} + \rho_{yy} = 0
\end{equation}
under substitution $\rho = \rho(r)$, where $\rho(r)$ satisfies
\begin{equation}
\label{Kundu rho}
(\tfrac{2}{3} r^{3/2} + c) \frac{\partial^2 \rho}{\partial r^2} 
+ r^{1/2} \frac{\partial\rho}{\partial r} = 0.
\end{equation}
The last equation is easily integrated,
$$
\rho = \int \frac{\diff r}{r^{3/2} + c},
$$
which yields $r(\rho)$.
The integration constants are suppressed, since they correspond to point symmetries 
$\rho \to b_1 \rho + b_0$ of the Laplace equation~\eqref{rho harmonic} and as such
they are inessential.

Moreover, $r$ is non-constant, since otherwise $C_\rho = 8 r_{x} r_{y}/r^2 q = 0$, which we excluded at the
beginning of the proof. 
Then $\rho$ is non-constant as well and we are free to choose coordinates $x,y$ in such a way that 
$\rho = x$.
Otherwise said, we are free to assume that $r = r(x)$ is given by 
\begin{equation}
\label{r(x)}
x = \int \frac{\diff r}{r^{3/2} + c}.
\end{equation}
Now, the above expression for $q$ evaluates to
$$
q = -\frac3{4} \frac{r^{3/2} + c}{4 \Lambda}.
$$
To solve equations~\eqref{Kundu sys f}, we choose
$$
f_1^1 = 0, \quad f_2^1 = \mp\frac{a_2}{2 \Lambda r^{3/2}},
\\
f_1^2 = 0, \quad f_2^2 = \pm\frac{a_1}{2 \Lambda r^{3/2}}.
$$
Next steps differ according to whether $c = 0$ or not.

Assume that $c \ne 0$.
With $\rho$ being an arbitrary harmonic function, the equation for $s$ becomes 
$$
s_{xx} + s_{yy} - 2 \frac{s_x^2 + s_y^2}{s}
+ \frac{(r^{\frac{3}{2}} + c) (s_x \rho_x+ s_y \rho_y)}{r}
- \frac{3}{4} a_1^2 \frac{(r^2 + c \sqrt{r}) s^2 (\rho_x^2 + \rho_y^2)}{\Lambda  r^4}
= 0,
$$
which is linearizable in terms of the variable
$$
S = \frac{1}{s} + \frac{a_1^2}{3 \Lambda  c r^{3/2}},
\quad \text{i.e.,} \quad
s = (S - \frac{a_1^2}{3 \Lambda  c r^{3/2}})^{-1},
$$
giving 
\begin{equation}
\label{eq:S rho}
S_{xx} + S_{yy} + \frac{r^{3/2} + c}{r} (\rho_x S_x + \rho_y S_y) = 0.
\end{equation}
With $\rho = x$, equation~\eqref{eq:S rho} simplifies to
\begin{equation}
\label{eq:S}
S_{xx} + S_{yy} + \frac{r^{3/2} + c}{r} S_x = 0,
\end{equation}
and the metric becomes
\begin{equation}
- \frac{3}{4} \frac{r^{3/2} + c}{\sqrt{r} \Lambda} (dx^2 + dy^2)
- 2 \frac{\,dy  \,du}{\sqrt{r} \Lambda} 
- 2  r \,du  \,dv 
+ \frac{4}{3} \frac{\psi  r^2 - \sqrt{r}}{\Lambda  c r} \,du^2.
\end{equation}
Choosing $r,y,u,v$ as coordinates, we get the 
solution~\eqref{eq:Lambda Kundu solution} 
and equation~\eqref{eq:Lambda Kundu equation}
for the unknown function $\psi$.

Finally, assume that $c = 0$.
Then $x = -2/\sqrt r$, so that $r = 4/x^2$
and easy computation gives the metric~\ref{eq:Lambda Kundu solution c = 0}.
\end{proof}

\begin{rem} \rm
Let us remark that not only the cylindrical Laplace equation, but also the linear 
equation~\eqref{eq:Lambda Kundu equation} is separable by the substitution 
$\psi(r,y) = R(r) Y(y)$.
The $y$-dependent factor $Y(y)$ is easy to find from
$$
Y'' = \frac{\Lambda}{3 c^2} C Y,
$$
where $C$ is an arbitrary constant.
The difficult part is the equation for $R(r)$, which is
$$
\frac{(c^2 r^3 + 1)^2}{r^2} R_{rr}
 + \frac{(c^2 r^3 + 1)(4 c^2 r^3 + 1)}{r^3} R_r
 - C R = 0.
$$
It is easily solvable for $C = 0$, but to apply the linear superposition principle for solutions
we need enough solutions for $C \ne 0$, too. 
Since $c \ne 0$ by assumption, we can set it to $1$ by substitution $r \to r/c^{2/3}$,
obtaining 
\begin{equation}
\label{eq:Lambda Kundu equation R}
R'' + \frac{4 r^3 + 1}{r (r^3 + 1)} R' - \frac{C r^2}{(r^3 + 1)^2} R = 0.
\end{equation}
Equation~\eqref{eq:Lambda Kundu equation R} has five regular singular points given by
$r = 0$, $r^3 + 1 = 0$, $r = \infty$; at these points it is amenable to convergent series solution.
For $C \ne 0$ it has the first integral $I(r) =$ const of the form $I = I_1 R' + I_0 R$, given by
$$
I_0 = \frac{(r^3 + 1)^2}{C r^2} I_1,
\qquad
I_1'' + \frac{3}{r (r^3 + 1)} I_1' + \frac{C r^2}{(r^3 + 1)} I_1 = 0.
$$
\end{rem}

\section{Differential invariants for $\Lambda$-vacuum Einstein metrics}
\label{sec8}

In this section we answer the question of how many invariants are functionally independent 
on solutions of the $\Lambda$-vacuum Einstein equations.
Recall that every system of partial differential equations induces a proper subset $\mathcal E^{(k)}$
in each $k$th-order jet space, where $k$ is greater or equal to the order of the equation.
The 20 invariants given in Proposition~\ref{prop:main_2ord} can be easily restricted to 
$\mathcal E^{(k)}$. 
The easiest way to do this is to solve the equations with respect to a suitable set of the highest 
order variables, and use them as substitutions (i.e., treat the $\Lambda$-vacuum Einstein equations 
as an orthonomic system). See equations~\eqref{EiEq qrsw ell} for example.

\begin{prop}
\label{prop:Einst_independent_2ord}
The ten (semi-)invariants $C_{\rho}$, $C_{\chi}$,
$Q_{\chi}$, $Q_{\gamma}$, 
$\ell_{\mathcal{C}}$, $\Theta_{\rm I}$,
$X C_{\chi}$, $X Q_{\gamma}$,
$X^\bot C_{\chi}$, $X^\bot Q_{\gamma}$  
constitute a maximal set of scalar differential (semi-)invariants of order $\le 2$ 
functionally independent on generic solutions of the $\Lambda$-vacuum Einstein equations.
\end{prop}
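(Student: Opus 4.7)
The strategy combines a dimension-counting upper bound with an explicit Jacobian rank computation, in the spirit of the proofs of Propositions~\ref{prop:first_inv}, \ref{prop:main_1ord}, and~\ref{prop:main_2ord}.

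For the upper bound, the $\Lambda$-vacuum Einstein equations $\fourRicci_{\mu\nu} - \Lambda \fourmetric_{\mu\nu} = 0$ are ten scalar tensorial equations on $J^2\tau$, manifestly $\mathfrak{G}_\tau$-invariant. The orthonomic reformulations~(\ref{EiEq hyp}) and~(\ref{EiEq ell}) from Propositions~\ref{prop:Lorentz Crho nonzero} and~\ref{prop:Eucl Crho nonzero} exhibit them as ten algebraically independent conditions on a distinguished set of second-order jet variables (such as $H_{,12}$, $P_{,1}$, $P_{,2}$, $q_{,1}$, $q_{,2}$, $q_{,12}$ in the Lorentzian orbit case), so $\mathcal{E}^{(2)}$ is a smooth codimension-ten submanifold of $J^2\tau$ at generic points. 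Since $\dim (J^2\tau/\mathfrak{G}_\tau^{(2)}) = 20$ by Proposition~\ref{prop:main_2ord}, the generic dimension of the quotient $\mathcal{E}^{(2)}/\mathfrak{G}_\tau^{(2)}$ is at most $10$, which bounds the number of functionally independent scalar invariants of order $\le 2$ on solutions.

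For the lower bound, I would exhibit ten functionally independent candidates. Using the parametrization of $\mathcal{E}^{(2)}$ from Proposition~\ref{prop:Lorentz Crho nonzero} (or~\ref{prop:Eucl Crho nonzero}), I would substitute the Einstein-determined expressions for the selected second-order derivatives into the coordinate formulas of
$$
C_\rho,\ C_\chi,\ Q_\chi,\ Q_\gamma,\ \ell_{\mathcal{C}},\ \Theta_{\mathrm{I}},\ X C_\chi,\ X Q_\gamma,\ X^\bot C_\chi,\ X^\bot Q_\gamma
$$
and compute the Jacobian of the resulting map with respect to the remaining independent jet coordinates. At a generic test point with $C_\rho\,\ell_{\mathcal{C}}\,\Theta_{\mathrm{I}} \ne 0$, so that the constructions of Sections~\ref{sec5}--\ref{sec6} are fully available, this Jacobian is expected to have rank exactly $10$, which can be certified by evaluation at a single numerically simple base point with rational data. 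Combined with the upper bound, this yields both functional independence and maximality.

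The principal difficulty is computational rather than conceptual. After the Einstein substitutions, the four second-order invariants $X C_\chi$, $X Q_\gamma$, $X^\bot C_\chi$, $X^\bot Q_\gamma$ become lengthy rational expressions in the lower-order jet variables, so the rank test must be arranged to keep the numerics manageable, for example by using a test metric whose matrix $H$ is already diagonal at the base point. A secondary point is the particular choice of these four invariant derivatives among the twelve $Z_i(\mathcal{I}_j)$ of Proposition~\ref{prop:main_2ord}: the selection is made a posteriori by testing quadruples until one yielding full rank is found, the remaining derivatives together with $C_{\mathop{\mathrm{ric}}}$ and $C_\nu$ being then automatically expressible as functions of the ten selected invariants on $\mathcal{E}^{(2)}$.
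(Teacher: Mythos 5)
Your proposal takes essentially the same route as the paper, whose entire proof is the observation that the rank of the Jacobian of these ten quantities at a generic point of $\mathcal E^{(2)}$ equals $10$ --- i.e., exactly your lower-bound step of substituting the orthonomic form of the Einstein equations from Propositions~\ref{prop:Lorentz Crho nonzero} and~\ref{prop:Eucl Crho nonzero} and certifying the rank at a generic point. Your additional dimension-counting upper bound for maximality is not spelled out in the paper (there it is implicit in checking that the full list of $20$ restricted second-order invariants also has Jacobian rank $10$ on $\mathcal E^{(2)}$), but it is consistent with the setup at the beginning of Section~\ref{sec8}.
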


\begin{proof} 
The rank of the Jacobian at a generic point of $\mathcal E^{(2)}$ is equal to 10.
\end{proof}

The simplest six relations are 
\[
\begin{aligned}
& C_{\mathrm{ric}} 
 + \tfrac{1}{2} C_\chi
 \mp_{\tilde{\bf g}} \tfrac{3}{2}  \ell_{\mathcal C} 
 = 0, 
\\
&C_\nu 
 \mp_{\tilde{\bf g}} \ell_{\mathcal C}
 + 4 \Lambda
 + \tfrac{1}{2} C_\rho = 0, 
\\
&{\pm_{\tilde{\bf g}}} (X^\bot C_\rho)^2
 + 4 Q_\chi C_\rho^2 
 - 16 (Q_\chi - Q_\gamma) C_\chi C_\rho 
 + 64 (Q_\chi - Q_\gamma)^2
 = 0, 
\\
&{\pm_{\tilde{\bf g}}} (X^\bot\ell_{\mathcal C})^2 
  \pm_{\tilde{\bf g} \bf h} 4 (\Theta_{\mathrm I} - 2 \ell_{\mathcal C} \sqrt{Q_\gamma})
    \Theta_{\mathrm I}
  + 4 \ell_{\mathcal C}^2 Q_\chi
 = 0,
\\
&XC_\rho
 + (C_\rho - C_\chi + 4 \Lambda \mp_{\tilde{\bf g}} \ell_{\mathcal C}) C_\rho
 + 8 (Q_\chi - Q_\gamma) = 0,
\\
&(Q_\chi - Q_\gamma) X\ell_{\mathcal C}^2
\mp_{\tilde{\bf g} \bf h} C_\rho   \sqrt{Q_\gamma}  \Theta_{\mathrm I}  X\ell_{\mathcal C}
 + (3 Q_\chi - 2 Q_\gamma) C_\rho  \ell_{\mathcal C}  X\ell_{\mathcal C} 
\\
&\quad + (C_\chi  C_\rho  Q_\chi + 2 C_\rho^2 Q_\chi - C_\rho^2 Q_\gamma - 4 Q_\chi^2 + 4 Q_\chi  Q_\gamma) \ell_{\mathcal C}^2 
\\
&\quad \mp_{\tilde{\bf g} \bf h} (2 C_\chi  C_\rho + C_\rho^2 - 8 Q_\chi)   \sqrt{Q_\gamma}  \Theta_{\mathrm I} \ell_{\mathcal C} 
 - 8  \sqrt{Q_\gamma}^3 \Theta_{\mathrm I} \ell_{\mathcal C}
\\
&\quad \pm_{\tilde{\bf g} \bf h} (C_\chi  C_\rho - \tfrac{1}{4} C_\rho^2 - 4 Q_\chi + 4 Q_\gamma)   \Theta_{\mathrm I}^2
 = 0.
\end{aligned}
\]
Here $\pm_{\tilde{\bf g} \bf h} = \mathop{\rm sgn}(\det \tilde{\bf g} \det \bf h)$,
$\mp_{\tilde{\bf g} \bf h} = -\mathop{\rm sgn}(\det \tilde{\bf g} \det \bf h)$ and $\pm_{\tilde{\bf g}} = \mathop{\rm sgn}(\det \tilde{\bf g})$.

\begin{prop}
The $\Lambda$-vacuum Einstein equations imply that\/ $\Xi$ and\/ $\Xi^\bot$ have the same 
Gaussian curvatures.
\end{prop}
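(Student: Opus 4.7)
The plan is to reduce the claim to an identity that has already been recorded as a consequence of the $\Lambda$-vacuum Einstein equations. By Proposition \ref{GaussCurv},
\[
K(\Xi) = -\tfrac{1}{4} C_\chi, \qquad
K(\Xi^\perp) = \tfrac{1}{2} C_{\mathrm{ric}} \mp_{\tilde{\mathbf g}} \tfrac{3}{4}\,\ell_{\mathcal C},
\]
so
\[
K(\Xi^\perp) - K(\Xi)
 = \tfrac{1}{2}\bigl( C_{\mathrm{ric}} + \tfrac{1}{2} C_\chi \mp_{\tilde{\mathbf g}} \tfrac{3}{2}\,\ell_{\mathcal C}\bigr).
\]
The parenthesised combination vanishes identically on $\Lambda$-vacuum metrics: this is precisely the first of the six relations listed immediately after Proposition \ref{prop:Einst_independent_2ord}, which I would cite directly.

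If one prefers a self-contained derivation of that relation, the natural route is a trace of $\fourRicci_{\mu\nu} = \Lambda \fourmetric_{\mu\nu}$ against $\tilde{\mathbf g}^{ij}\mathbf e_i \otimes \mathbf e_j$, where the horizontal lifts $\mathbf e_i$ are those of \eqref{eq:e}. Using the O'Neill/Gauss--Codazzi-type identities for the semi-Riemannian submersion $\pi:\mathcal M\to\mathcal S$, this contraction decomposes into (i) the scalar curvature $C_{\mathrm{ric}}$ of the orbit metric $\tilde{\mathbf g}$, (ii) a term proportional to $\ell_{\mathcal C}$ coming from the Ehresmann/O'Neill tensor $\mathbf A$ (this is the standard $|\mathbf A|^2$ contribution, and by the computation of Section~\ref{subsec:Further first-order scalar invariants} the only surviving components of $\mathbf A$ produce exactly a multiple of $\ell_{\mathcal C}$ with the sign $\mp_{\tilde{\mathbf g}}$), and (iii) a term governed by the intrinsic Gaussian curvature of the Killing leaves, which by the coordinate formula for $C_\chi$ in Section \ref{subsec:Invariants C and Q} is $-\tfrac12 C_\chi$; plus a cosmological-constant contribution that matches the right-hand side.

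The main obstacle is the bookkeeping in step (ii)--(iii): one must track the two independent signs $\pm_{\tilde{\mathbf g}}$ and $\pm_{\mathbf h}$ to handle Lorentzian and Riemannian orbit metrics simultaneously, and verify that the $\Lambda$ and $C_\rho$ terms arising from the trace of the Einstein equations exactly cancel against those produced by $C_{\mathrm{ric}}$. In practice, the cleanest way to discharge this is to use the orthonomic form of the $\Lambda$-vacuum Einstein equations provided by Propositions \ref{prop:Lorentz Crho nonzero} and \ref{prop:Eucl Crho nonzero}, substitute into the coordinate expressions for $C_{\mathrm{ric}}$, $C_\chi$, $\ell_{\mathcal C}$, and observe the cancellation; this is a routine symbolic computation once the substitutions are in place. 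Either way, the equality $K(\Xi) = K(\Xi^\perp)$ then follows.
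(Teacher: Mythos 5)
Your proposal is correct and is essentially the paper's own proof: Proposition \ref{GaussCurv} gives $K(\Xi^\perp)-K(\Xi)=\tfrac12\bigl(C_{\mathrm{ric}}+\tfrac12 C_\chi\mp_{\tilde{\mathbf g}}\tfrac32\ell_{\mathcal C}\bigr)$, and the vanishing of the parenthesised expression is exactly the first relation recorded after Proposition \ref{prop:Einst_independent_2ord}, which is what the paper cites. The additional sketch of a self-contained derivation via the O'Neill identities is a reasonable elaboration but not needed.
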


\begin{proof}
The relation $C_{\mathrm{ric}} 
 + \tfrac{1}{2} C_\chi
 \mp_{\tilde{\bf g}} \tfrac{3}{2}  \ell_{\mathcal C} 
 = 0$ 
is equivalent to
$K(\Xi^{\perp})=K(\Xi)$.
\end{proof}

\section{The equivalence problem in the case $\ell_{\mathcal{C}}C_{\rho}\protect\neq0$}

\label{sect:equivalence} 


Let $\{I_{1},\dots,I_{6}\}$ be a maximal system of generically functionally
independent scalar differential invariants for $\mathfrak{G}^{(1)}$
on $J^{1}(\tau)$. For any metric $\mathbf{g}$, which is a section
of $\tau:E\rightarrow\mathcal{M}$, the restrictions $\{I_{i}[\mathbf{g}]\}$
of these invariants to the first-order prolongation of $\mathbf{g}$
provide at most two functionally independent differential invariants
on $\mathcal{S}$. The functional relations between these restricted
invariants are necessary conditions for any other metric $\bar{\mathbf{g}}$
being equivalent to $\mathbf{g}$. Here we discuss two alternative
methods for the solution of the equivalence problem for metrics 
satisfying
$\ell_{\mathcal{C}}C_{\rho}\neq0$ and possessing at least two functionally 
independent scalar invariants.

\subsection{The first method }

Let $\mathbf{g}$ and $\bar{\mathbf{g}}$ be two generic metrics which,
in adapted coordinates $(t^{1},t^{2},z^{1},z^{2})$ and $(\bar{t}^{1},\bar{t}^{2},\bar{z}^{1},\bar{z}^{2}),$
are written as
\begin{equation}
\mathbf{g}=b_{ij}(t^{1},t^{2})\,dt^{i}\,dt^{j}+2f_{ik}(t^{1},t^{2})\,dt^{i}\,dz^{k}+h_{kl}(t^{1},t^{2})\,dz^{k}\,dz^{l},\label{primag}
\end{equation}
and 
\begin{equation}
\bar{\mathbf{g}}=\bar{b}_{mn}(\bar{t}^{1},\bar{t}^{2})\,d\bar{t}^{m}\,d\bar{t}^{n}+2\bar{f}_{mr}(\bar{t}^{1},\bar{t}^{2})\,d\bar{t}^{m}\,d\bar{z}^{r}+\bar{h}_{rs}(\bar{t}^{1},\bar{t}^{2})\,d\bar{z}^{r}\,d\bar{z}^{s},\label{secondag}
\end{equation}
respectively. \\
 If $\mathbf{g}$ and $\bar{\mathbf{g}}$ are equivalent, then there
is a pair of indexes $a,b\in\{1,2,..,6\}$ such that $\{ I_{a}[\mathbf{g}](t^{1},t^{2}),$
$I_{b}[\mathbf{g}](t^{1},t^{2})\} $ and $\{ \bar{I}_{a}[\bar{\mathbf{g}}](\bar{t}^{1},\bar{t}^{2}),$
$\bar{I}_{b}[\bar{\mathbf{g}}](\bar{t}^{1},\bar{t}^{2})\} $
are two systems of functionally independent invariants on $\mathcal{S}$.
For ease of notation, we will denote by $\{\mathcal{I}^{1}(t^{1},t^{2}),\mathcal{I}^{2}(t^{1},t^{2})\}$
and $\{\bar{\mathcal{I}}^{1}(\bar{t}^{1},\bar{t}^{2}),\bar{\mathcal{I}}^{2}(\bar{t}^{1},\bar{t}^{2})\}$,
respectively, these two systems.

Then, by implicit function theorem, the system 
\[
\mathcal{I}^{1}(t^{1},t^{2})=\bar{\mathcal{I}}^{1}(\bar{t}^{1},\bar{t}^{2}),\qquad\mathcal{I}^{2}(t^{1},t^{2})=\bar{\mathcal{I}}^{2}(\bar{t}^{1},\bar{t}^{2})
\]
locally defines the $(t^{1},t^{2})$-part of coordinate
transformation (\ref{eq:psgroup_fin}), i.e., 
\begin{equation}
\bar{t}^{i}=\phi^{i}(t^{1},t^{2}),\qquad i=1,2.\label{eq:equiv1}
\end{equation}
On the other hand, under a coordinate transformation $P$ of the form
\eqref{eq:psgroup_fin}, the coordinate vector fields transform as
\[
P_{*}(\partial_{z^{j}})=\alpha_{j}^{i}\partial_{\bar{z}^{i}},\qquad P_{*}(\partial_{t^{j}})=\left(\frac{\partial\psi^{i}}{\partial t^{j}}\circ P^{-1}\right)\partial_{\bar{z}^{i}}+\left(\frac{\partial\phi^{i}}{\partial t^{j}}\circ P^{-1}\right)\partial_{\bar{t}^{i}}.
\]
Hence, in view of relations (see Proposition \ref{g_frame})
\[
P_{*}\left(\mathcal{C}\right)=\epsilon_{1}\,\bar{\mathcal{C}},\qquad P_{*}\left(\mathcal{C}^{\perp}\right)=\epsilon_{1}\epsilon_{2}\,\bar{\mathcal{C}}^{\perp},
\]
where $\epsilon_{1}=\mathop{\mathrm{sgn}}\left(J_{\phi}\right)$ and
$\epsilon_{2}=\mathop{\mathrm{sgn}}\left(\det\left(\alpha_{j}^{i}\right)\right)$,
one readily gets that 
\begin{equation}
\left(\begin{array}{@{}ll@{}}
\alpha_{1}^{1}\  & \alpha_{2}^{1}\\
\alpha_{1}^{2}\  & \alpha_{2}^{2}
\end{array}\right)=\epsilon_{1}\left(\begin{array}{@{}ll@{}}
\bar{\mathcal{C}}^{1}\  & \epsilon_{2}\left(\bar{\mathcal{C}}^{\perp}\right)^{1}\vspace{5pt}\\
\bar{\mathcal{C}}^{2}\  & \epsilon_{2}\left(\bar{\mathcal{C}}^{\perp}\right)^{2}
\end{array}\right)\left(\begin{array}{@{}ll@{}}
\mathcal{C}^{1}\  & \left(\mathcal{C}^{\perp}\right)^{1}\vspace{5pt}\\
\mathcal{C}^{2}\  & \left(\mathcal{C}^{\perp}\right)^{2}
\end{array}\right)^{-1},\label{eq:alpha_matrix}
\end{equation}
where $\bar{t}^{i}=\phi^{i}(t^{1},t^{2})$, $i=1,2$.

Analogously, in view of relations 
\[
P_{*}\left(\mathcal{H}\right)=\bar{\mathcal{H}},\qquad P_{*}\left(\mathcal{H}^{\perp}\right)=\epsilon_{1}\,\bar{\mathcal{H}}^{\perp},
\]
one also gets under substitution (\ref{eq:psgroup_fin})  that 
\begin{equation}
\left(\begin{array}{@{}ll@{}}
{\displaystyle \frac{\partial\phi^{1}}{\partial t^{1}}} & {\displaystyle \frac{\partial\phi^{1}}{\partial t^{2}}}\vspace{10pt}\\
{\displaystyle \frac{\partial\phi^{2}}{\partial t^{1}}} & {\displaystyle \frac{\partial\phi^{2}}{\partial t^{2}}}
\end{array}\right)=\left(\begin{array}{@{}ll@{}}
\bar{\mathcal{H}}^{1}\vspace{5pt}\  & \epsilon_{1}\left(\bar{\mathcal{H}}^{\perp}\right)^{1}\\
\bar{\mathcal{H}}^{2} & \epsilon_{1}\left(\bar{\mathcal{H}}^{\perp}\right)^{2}
\end{array}\right)\left(\begin{array}{@{}ll@{}}
\mathcal{H}^{1}\vspace{5pt}\  & \left(\mathcal{H}^{\perp}\right)^{1}\\
\mathcal{H}^{2} & \left(\mathcal{H}^{\perp}\right)^{2}
\end{array}\right)^{-1}\label{eq:Jacobi_matr_phi}
\end{equation}
and 
\begin{equation}
\begin{array}{@{}ll@{}}
\left(\begin{array}{@{}ll@{}}
{\displaystyle \frac{\partial\psi^{1}}{\partial t^{1}}} & {\displaystyle \frac{\partial\psi^{1}}{\partial t^{2}}}\vspace{10pt}\\
{\displaystyle \frac{\partial\psi^{2}}{\partial t^{1}}} & {\displaystyle \frac{\partial\psi^{2}}{\partial t^{2}}}
\end{array}\right)= & \left(\begin{array}{@{}ll@{}}
\alpha_{1}^{1}\  & \alpha_{2}^{1}\vspace{5pt}\\
\alpha_{1}^{2}\  & \alpha_{2}^{2}
\end{array}\right)\left(\begin{array}{@{}ll@{}}
f_{1}^{1}\  & f_{2}^{1}\vspace{5pt}\\
f_{1}^{2}\  & f_{2}^{2}
\end{array}\right)-\\
 & -\left(\begin{array}{@{}ll@{}}
\bar{f}_{1}^{1}\  & \bar{f}_{2}^{1}\vspace{5pt}\\
\bar{f}_{1}^{2}\  & \bar{f}_{2}^{2}
\end{array}\right)\left(\begin{array}{@{}ll@{}}
\bar{\mathcal{H}}^{1}\  & \epsilon_{1}\left(\bar{\mathcal{H}}^{\perp}\right)^{1}\vspace{5pt}\\
\bar{\mathcal{H}}^{2}\  & \epsilon_{1}\left(\bar{\mathcal{H}}^{\perp}\right)^{2}
\end{array}\right)\left(\begin{array}{@{}ll@{}}
\vspace{5pt}\mathcal{H}^{1} & \ \left(\mathcal{H}^{\perp}\right)^{1}\\
\mathcal{H}^{2} & \ \left(\mathcal{H}^{\perp}\right)^{2}
\end{array}\right)^{-1},
\end{array}\label{eq:Jacobi_matr_psi}
\end{equation}
where $f_{j}^{k}=f_{js}h^{sk}$ and $\bar{f}_{j}^{k}=\bar{f}_{js}\bar{h}^{sk}$
with $h^{sk}$ and $\bar{h}^{sk}$ denoting the elements of the inverse
matrix $(h_{ij})^{-1}$ and $(\bar{h}_{ij})^{-1}$, respectively.

Thus one gets the following 
\begin{thm}
The metrics 
\[
\mathbf{g}=b_{ij}(t^{1},t^{2})\,dt^{i}\,dt^{j}+2f_{ik}(t^{1},t^{2})\,dt^{i}\,dz^{k}+h_{kl}(t^{1},t^{2})\,dz^{k}\,dz^{l}
\]
and 
\[
\bar{\mathbf{g}}=\bar{b}_{mn}(\bar{t}^{1},\bar{t}^{2})\,d\bar{t}^{m}\,d\bar{t}^{n}+2\bar{f}_{mr}(\bar{t}^{1},\bar{t}^{2})\,d\bar{t}^{m}\,d\bar{z}^{r}+\bar{h}_{rs}(\bar{t}^{1},\bar{t}^{2})\,d\bar{z}^{r}\,d\bar{z}^{s},
\]
with $\ell_{\mathcal{C}}C_{\rho}\neq0$, are equivalent
if, and only if, there exists a coordinate transformation 
\[
P:\qquad\bar{t}^{\,i}=\phi^{i}(t^{1},t^{2}),\qquad\bar{z}^{\,i}=\alpha_{j}^{i}\,z^{j}+\psi^{i}(t^{1},t^{2}),\qquad(\alpha_{j}^{i})\in\mathrm{GL}(2, \mathbb{R})
\]
satisfying the following conditions: 
\begin{enumerate}
\item [\rm{{(i)}}] $\{ \mathcal{I}^{1}=I_{a}[\mathbf{g}],\mathcal{I}^{2}=I_{b}[\mathbf{g}]\} $
and $\{ \bar{\mathcal{I}}^{1}=\bar{I}_{a}[\bar{\mathbf{g}}],\bar{\mathcal{I}}^{2}=\bar{I}_{b}[\bar{\mathbf{g}}]\} $,
for some $a,b\in\{1,2,...,6\}$, are two systems of functionally independent
scalar invariants on $\mathcal{S}$ and the coordinate transformation
$\left\{ \bar{t}^{1}=\phi^{1}(t^{1},t^{2}),\,\bar{t}^{2}=\phi^{2}(t^{1},t^{2})\right\} $
is implicitly defined by 
\[
\mathcal{I}^{1}-\bar{\mathcal{I}}^{1}=0,\qquad\mathcal{I}^{2}-\bar{\mathcal{I}}^{2}=0;
\]
\item [\rm{{(ii)}}] the right hand side of \eqref{eq:alpha_matrix}, with $\epsilon_{1}=\mathop{\mathrm{sgn}}\left(J_{\phi}\right)$
and $\epsilon_{2}=\mathop{\mathrm{sgn}}\left(\det\left(\alpha_{j}^{i}\right)\right)$,
is a constant matrix coinciding with $(\alpha_{j}^{i})\in\mathrm{GL}(2, \mathbb{R})$;
\item [\rm{{(iii)}}] the transformation $\left\{ \bar{t}^{1}=\phi^{1}(t^{1},t^{2}),\,\bar{t}^{2}=\phi^{2}(t^{1},t^{2})\right\} $,
defined in {\rm(i)}, satisfies \eqref{eq:Jacobi_matr_phi};
\item [\rm{{(iv)}}] the functions $\psi^{i}=\psi^{i}(t^{1},t^{2})$, $i=1,2$,
are solutions of an integrable system of first-order partial differential
equations defined by \eqref{eq:Jacobi_matr_psi};
\item [\rm{{(v)}}] the matrix $(\alpha_{j}^{i})$
and the derivatives of $\phi^{i}$ and
$\psi^{i}$ satisfy the system \eqref{cond_equiv} for
$\mathbf{g}$ and $\bar{\mathbf{g}}$.
\end{enumerate}
\end{thm}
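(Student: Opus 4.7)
The plan is to treat the two implications separately, using the invariance/semi-invariance properties of the scalars $I_1,\dots,I_6$ and of the frame $\{\mathcal{H},\mathcal{H}^{\perp},\mathcal{C},\mathcal{C}^{\perp}\}$ developed in Section~\ref{sec5} (in particular Proposition~\ref{g_frame}, which requires exactly the genericity hypothesis $\ell_{\mathcal{C}}C_{\rho}\neq 0$). The direct implication reads off the transformation from the invariant data, while the reverse implication verifies that the transformation so produced actually belongs to $\mathfrak{G}$ and intertwines the two metrics.

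For necessity, I would start by assuming that $P\in\mathfrak{G}$ intertwines $\mathbf{g}$ and $\bar{\mathbf{g}}$ and use Proposition~\ref{prop:pseudogroup} to write $P$ in the form $\bar t^i=\phi^i(t^1,t^2)$, $\bar z^i=\alpha^i_j z^j+\psi^i(t^1,t^2)$. Since every $I_k$ is a scalar differential invariant of $\mathfrak{G}_\tau^{(1)}$, its restrictions to $\mathbf{g}$ and $\bar{\mathbf{g}}$ are related by $I_k[\mathbf{g}]=\bar I_k[\bar{\mathbf{g}}]\circ P$. Picking any pair with functionally independent restrictions (which exists generically) immediately yields~(i). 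Next, the transformation rules recorded in Proposition~\ref{g_frame},
\[
P_{*}\mathcal{C}=\epsilon_{1}\bar{\mathcal{C}},\quad
P_{*}\mathcal{C}^{\perp}=\epsilon_{1}\epsilon_{2}\bar{\mathcal{C}}^{\perp},\quad
P_{*}\mathcal{H}=\bar{\mathcal{H}},\quad
P_{*}\mathcal{H}^{\perp}=\epsilon_{1}\bar{\mathcal{H}}^{\perp},
\]
combined with the pushforward formulas $P_{*}(\partial_{z^j})=\alpha^i_j\partial_{\bar z^i}$ and $P_{*}(\partial_{t^j})=(\partial\phi^i/\partial t^j)\partial_{\bar t^i}+(\partial\psi^i/\partial t^j)\partial_{\bar z^i}$, yield (ii), (iii), (iv) by expanding each side in the coordinate basis. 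Condition~(v) is just the tensorial transformation law of $\mathbf{g}$ under the coordinate change~$P$.

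For sufficiency, I would run the argument in reverse: given $P$ satisfying (i)--(v), first observe that by Proposition~\ref{prop:pseudogroup} any map of this shape with $(\alpha^i_j)\in\mathrm{GL}(2,\mathbb{R})$ and $J_{\phi}\neq 0$ is a $\mathfrak{G}$-transformation; the invertibility of $(\alpha^i_j)$ is enforced by (ii), since the right-hand side there is the product of two invertible matrices (both frames $\{\mathcal{C},\mathcal{C}^{\perp}\}$ and $\{\bar{\mathcal{C}},\bar{\mathcal{C}}^{\perp}\}$ are orthogonal and nonzero in $\Xi$), and $J_{\phi}\neq 0$ is enforced by (iii) (both $\{\mathcal{H},\mathcal{H}^{\perp}\}$ and $\{\bar{\mathcal{H}},\bar{\mathcal{H}}^{\perp}\}$ are bases). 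Condition (v) is then exactly the statement that $P^{*}\bar{\mathbf{g}}=\mathbf{g}$, so $P$ realises the required equivalence.

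The main obstacle will be to carry out condition~(iv) rigorously: the system for the two unknown functions $\psi^1,\psi^2$ is four scalar equations for their first partials, and one must verify its Frobenius integrability. I expect that the compatibility conditions $\partial_{t^1}\partial_{t^2}\psi^i=\partial_{t^2}\partial_{t^1}\psi^i$ translate, after using (i)--(iii) to substitute for $\phi^i$ and $\alpha^i_j$, into identities that follow from the invariance of $\ell_{\mathcal{C}}$, $C_\rho$ and the frame relations of Proposition~\ref{g_frame}; the semi-invariance factors $\epsilon_{1},\epsilon_{2}$ have to be tracked consistently, and this bookkeeping is the delicate point. A secondary technical issue is that (ii) demands the right-hand side be a \emph{constant} matrix; this constancy is not automatic from~(i) alone and must be extracted from the fact that, by Lemma~\ref{lem:sigma} and Proposition~\ref{g_frame}, the frame components of $\mathcal{C},\mathcal{C}^{\perp}$ and $\mathcal{H},\mathcal{H}^{\perp}$ are determined by the orbit-space invariants through the orbit metric, so that their ratio, evaluated on the matched pairs of points under $P$, is forced to take constant values on an open set.
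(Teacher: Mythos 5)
Your proposal follows the paper's own route: the theorem carries no separate proof in the paper precisely because the derivation of \eqref{eq:equiv1}, \eqref{eq:alpha_matrix}, \eqref{eq:Jacobi_matr_phi} and \eqref{eq:Jacobi_matr_psi} immediately preceding it \emph{is} the necessity argument, and sufficiency reduces, exactly as you say, to the observation that condition (v) is the component form \eqref{cond_equiv} of $P^{*}\bar{\mathbf{g}}=\mathbf{g}$. The only point worth correcting is that the two ``obstacles'' you flag at the end dissolve once the logical structure of the statement is kept in view. In the necessity direction, the system \eqref{eq:Jacobi_matr_psi} is integrable simply because the $\psi^{i}$ of the actual isometry $P$ solve it, and the right-hand side of \eqref{eq:alpha_matrix} is constant because it equals the genuinely constant matrix $(\alpha^{i}_{j})$ of $P\in\mathfrak{G}$ guaranteed by Proposition~\ref{prop:pseudogroup} --- nothing has to be ``extracted'' from (i), and no general Frobenius computation is required. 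In the sufficiency direction, integrability in (iv) and constancy in (ii) are \emph{hypotheses} to be checked for a candidate transformation, not conclusions to be established: the theorem converts the equivalence problem into the verification of (i)--(v), and a failure of either check simply certifies that no equivalence exists.
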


\subsection{The second method}

Let $\mathbf{g}$ and $\bar{\mathbf{g}}$ be two metrics which, in
adapted coordinates $(t^{1},t^{2},z^{1},z^{2})$ and $(\bar{t}^{1},\bar{t}^{2},\bar{z}^{1},\bar{z}^{2}),$
read as \eqref{primag} and \eqref{secondag}, respectively. Under
the assumption $\ell_{\mathcal{C}}C_{\rho}\neq0$, by using the semi-invariant
orthogonal frames (see Proposition \ref{g_frame})
\[
\left\{ \mathcal{Y}_{1}=\mathcal{H},\mathcal{Y}_{2}=\mathcal{H}^{\bot},\mathcal{Y}_{3}=\mathcal{C},\mathcal{Y}_{4}=\mathcal{C}^{\bot}\right\} 
\]
and 
\[
\left\{ \bar{\mathcal{Y}}_{1}=\bar{\mathcal{H}},\bar{\mathcal{Y}}_{2}=\bar{\mathcal{H}}^{\bot},\bar{\mathcal{Y}}_{3}=\bar{\mathcal{C}},\bar{\mathcal{Y}}_{4}=\bar{\mathcal{C}}^{\bot}\right\} ,
\]
and the corresponding semi-invariant dual co-frames $\left\{ \omega_{1},\omega_{2},\omega_{3},\omega_{4}\right\} $
and $\left\{ \bar{\omega}_{1},\bar{\omega}_{2},\bar{\omega}_{3},\bar{\omega}_{4}\right\} $,
$\mathbf{g}$ and $\bar{\mathbf{g}}$ can be written as 
\[
\mathbf{g}=\ell_{\mathcal{H}}\omega_{1}^{2}+\ell_{\mathcal{H}^{\perp}}\omega_{2}^{2}+\ell_{\mathcal{C}}\omega_{3}^{2}+\ell_{\mathcal{C}^{\perp}}\omega_{4}^{2}
\]
and 
\[
\bar{\mathbf{g}}=\bar{\ell}_{\bar{\mathcal{H}}}\bar{\omega}_{1}^{2}+\bar{\ell}_{\bar{\mathcal{H}}^{\perp}}\bar{\omega}_{2}^{2}+\bar{\ell}_{\bar{\mathcal{C}}}\bar{\omega}_{3}^{2}+\bar{\ell}_{\bar{\mathcal{C}}^{\perp}}\bar{\omega}_{4}^{2},
\]
respectively.

We notice that, in view of (\ref{transf_frame}), under the pseudo-group
action (\ref{eq:psgroup_fin}) the co-frame transforms as
\begin{equation}
\omega_{1}\mapsto\omega_{1},\quad\omega_{2}\mapsto(\mathop{\mathrm{sgn}}J_{\phi})\omega_{2},\quad\omega_{3}\mapsto(\mathop{\mathrm{sgn}}J_{\phi})\omega_{3},\quad\omega_{4}\mapsto(\mathop{\mathrm{sgn}}J_{\phi})(\mathop{\mathrm{sgn}}\mathop{\mathrm{det}}\alpha_{j}^{i})\omega_{4}.\label{eq:coframe_transf}
\end{equation}
Moreover,in terms of local adapted coordinates $\{t^{1},t^{2},z^{1},z^{2}\}$,
one has 
\begin{equation}
\left(\begin{array}{@{}c@{}}
\omega_{1}\vspace{5pt}\\
\omega_{2}
\end{array}\right)=\left(\begin{array}{@{}ll@{}}
\mathcal{H}^{1}\vspace{5pt}\: & \left(\mathcal{H}^{\perp}\right)^{1}\\
\mathcal{H}^{2} & \left(\mathcal{H}^{\perp}\right)^{2}
\end{array}\right)^{-1}\left(\begin{array}{@{}c@{}}
dt^{1}\vspace{5pt}\\
dt^{2}
\end{array}\right)\label{omega12}
\end{equation}
and 
\begin{equation}
\begin{array}{l}
\left(\begin{array}{@{}c@{}}
\omega_{3}\vspace{5pt}\\
\omega_{4}
\end{array}\right)=\left(\begin{array}{@{}ll@{}}
\mathcal{C}^{1}\vspace{5pt}\: & \left(\mathcal{C}^{\perp}\right)^{1}\\
\mathcal{C}^{2} & \left(\mathcal{C}^{\perp}\right)^{2}
\end{array}\right)^{-1}\left[\left(\begin{array}{@{}ll@{}}
f_{1}^{1}\vspace{5pt}\: & f_{2}^{1}\\
f_{1}^{2} & f_{2}^{2}
\end{array}\right)\left(\begin{array}{@{}c@{}}
dt^{1}\vspace{5pt}\\
dt^{2}
\end{array}\right)+\left(\begin{array}{@{}c@{}}
dz^{1}\vspace{5pt}\\
dz^{2}
\end{array}\right)\right]\end{array}.\label{omega34}
\end{equation}

From now on, we assume that there is a pair of indexes
$a,b\in\{1,2,..,6\}$ such that $\left\{ I_{a}[\mathbf{g}](t^{1},t^{2}),\right.$
$\left.I_{b}[\mathbf{g}](t^{1},t^{2})\right\} $ and $\left\{ \bar{I}_{a}[\bar{\mathbf{g}}](\bar{t}^{1},\bar{t}^{2}),\right.$
$\left.\bar{I}_{b}[\bar{\mathbf{g}}](\bar{t}^{1},\bar{t}^{2})\right\} $
are two systems of functionally independent invariants on $\mathcal{S}$.
For ease of notation, we will denote these two systems by $\{\mathcal{I}^{1}(t^{1},t^{2}),\mathcal{I}^{2}(t^{1},t^{2})\}$
and $\{\bar{\mathcal{I}}^{1}(\bar{t}^{1},\bar{t}^{2}),\bar{\mathcal{I}}^{2}(\bar{t}^{1},\bar{t}^{2})\}$,
respectively. In view
of the functional independence, $\left\{ \mathcal{I}^{1},\mathcal{I}^{2},z^{1},z^{2}\right\} $
and $\left\{ \bar{\mathcal{I}}^{1},\bar{\mathcal{I}}^{2},\bar{z}^{1},\bar{z}^{2}\right\} $
define new adapted coordinates for $\mathbf{g}$ and $\bar{\mathbf{g}}$,
respectively. Therefore, by the implicit function theorem,
$\left\{ \mathcal{I}^{1}=\mathcal{I}^{1}(t^{1},t^{2}),\,\mathcal{I}^{2}=\mathcal{I}^{2}(t^{1},t^{2})\right\} $
and $\left\{ \bar{\mathcal{I}}^{1}=\bar{\mathcal{I}}^{1}(\bar{t}^{1},\bar{t}^{2}),\,\bar{\mathcal{I}}^{2}=\bar{\mathcal{I}}^{2}(\bar{t}^{1},\bar{t}^{2})\right\}$
define local coordinate transformations 

\[
t^{i}=m^{i}(\mathcal{I}^{1},\mathcal{I}^{2}),\qquad i=1,2
\]
and 
\[
\bar{t}^{i}=\bar{m}^{i}(\bar{\mathcal{I}}^{1},\bar{\mathcal{I}}^{2}),\qquad i=1,2.
\]
This entails that, according to (\ref{omega12})
and (\ref{omega34}), $\left\{ \omega_{1},\omega_{2},\omega_{3},\omega_{4}\right\} $
can be written in terms of new adapted coordinates $\left\{ \mathcal{I}^{1},\,\mathcal{I}^{2},\,z^{1},\,z^{2}\right\} $
as follows 

\[
\omega_{h}=a_{hi}\,d\mathcal{I}^{i}+p_{hi}\,dz^{i},
\]
where $a_{hi}=a_{hi}\left(\mathcal{I}^{1},\mathcal{I}^{2}\right)$,
$p_{hi}=p_{hi}\left(\mathcal{I}^{1},\mathcal{I}^{2}\right)$ and

\[
\det\left(\begin{array}{@{}ll@{}}
a_{11} & a_{12}\\
a_{21} & a_{22}
\end{array}\right)\neq0,\qquad\det\left(\begin{array}{@{}ll@{}}
p_{31} & p_{32}\\
p_{41} & p_{42}
\end{array}\right)\neq0,\qquad p_{1i}=p_{2i}=0.
\]
In particular the coefficients $a_{hi}$ and $p_{hi}$ can be computed
by using the following identities 
\begin{equation}
\left(\begin{array}{@{}c@{}}
\omega_{1}\vspace{5pt}\\
\omega_{2}
\end{array}\right)=\left(\begin{array}{@{}ll@{}}
\mathcal{H}^{1}\vspace{5pt}\: & \left(\mathcal{H}^{\perp}\right)^{1}\\
\mathcal{H}^{2} & \left(\mathcal{H}^{\perp}\right)^{2}
\end{array}\right)_{\mathrm{invar.}}^{-1}\left(\begin{array}{@{}ll@{}}
{\displaystyle \frac{\partial m^{1}}{\partial\mathcal{I}^{1}}}\vspace{7pt}\: & {\displaystyle \frac{\partial m^{1}}{\partial\mathcal{I}^{2}}}\\
{\displaystyle \frac{\partial m^{2}}{\partial\mathcal{I}^{1}}} & {\displaystyle \frac{\partial m^{2}}{\partial\mathcal{I}^{2}}}
\end{array}\right)\left(\begin{array}{@{}c@{}}
d\mathcal{I}^{1}\vspace{5pt}\\
d\mathcal{I}^{2}
\end{array}\right),\label{eq:equiv_omega1-2}
\end{equation}
and 
\begin{equation}
\begin{array}{l}
\left(\begin{array}{@{}c@{}}
\omega_{3}\vspace{5pt}\\
\omega_{4}
\end{array}\right)=\left(\begin{array}{@{}ll@{}}
\mathcal{C}^{1}\vspace{5pt}\: & \left(\mathcal{C}^{\perp}\right)^{1}\\
\mathcal{C}^{2} & \left(\mathcal{C}^{\perp}\right)^{2}
\end{array}\right)_{\mathrm{invar.}}^{-1}\left[\left(\begin{array}{@{}ll@{}}
f_{1}^{1}\vspace{5pt}\: & f_{2}^{1}\\
f_{1}^{2} & f_{2}^{2}
\end{array}\right)_{\mathrm{invar.}}\left(\begin{array}{@{}ll@{}}
{\displaystyle \frac{\partial m^{1}}{\partial\mathcal{I}^{1}}}\vspace{7pt}\: & {\displaystyle \frac{\partial m^{1}}{\partial\mathcal{I}^{2}}}\\
{\displaystyle \frac{\partial m^{2}}{\partial\mathcal{I}^{1}}} & {\displaystyle \frac{\partial m^{2}}{\partial\mathcal{I}^{2}}}
\end{array}\right)\left(\begin{array}{@{}c@{}}
d\mathcal{I}^{1}\vspace{5pt}\\
d\mathcal{I}^{2}
\end{array}\right)+\left(\begin{array}{@{}c@{}}
dz^{1}\vspace{5pt}\\
dz^{2}
\end{array}\right)\right],\end{array}\label{eq:equiv_omega3-4}
\end{equation}
where \lq\lq{}invar.\rq\rq{} means the restriction to $\{t^{i}=m^{i}\left(\mathcal{I}^{1},\mathcal{I}^{2}\right)\}$.

Analogously one can write $\{ \bar{\omega}_{1},\bar{\omega}_{2},\bar{\omega}_{3},\bar{\omega}_{4}\} $
in terms of the adapted coordinates $\{ \bar{\mathcal{I}}^{1},\,\bar{\mathcal{I}}^{2},\,\bar{z}^{1},\,\bar{z}^{2}\} $
as 
\[
\bar{\omega}_{i}=\bar{a}_{hi}\,d\bar{\mathcal{I}}^{i}+\bar{p}_{hi}\,d\bar{z}^{i},
\]
where the coefficients $\bar{a}_{hi}=\bar{a}_{hi}(\bar{\mathcal{I}}^{1},\bar{\mathcal{I}}^{2})$,
$\bar{p}_{hi}=\bar{p}_{hi}(\mathcal{\bar{I}}^{1},\bar{\mathcal{I}}^{2})$
are such that
\[
\det\left(\begin{array}{@{}ll@{}}
\bar{a}_{11} & \bar{a}_{12}\\
\bar{a}_{21} & \bar{a}_{22}
\end{array}\right)\neq0,\qquad\det\left(\begin{array}{@{}ll@{}}
\bar{p}_{31} & \bar{p}_{32}\\
\bar{p}_{41} & \bar{p}_{42}
\end{array}\right)\neq0,\qquad\bar{p}_{1i}=\bar{p}_{2i}=0
\]
and can be computed by using formulas analogous to (\ref{eq:equiv_omega1-2})
and (\ref{eq:equiv_omega3-4}) (where in this case \lq\lq{}invar.\rq\rq{}
means the restriction to $\{ \bar{t}^{i}=\bar{m}^{i}(\bar{\mathcal{I}}^{1},\bar{\mathcal{I}}^{2})\} $).

\begin{lem}
\label{lema_equiv} Under the pseudo-group action \eqref{eq:psgroup_fin},
the coefficients $a_{ij}=\omega_{i}(\partial_{\mathcal{I}^{j}})$
transform according to the following formulas 
\begin{equation}
\bar{a}_{1i}=a_{1i},\qquad\bar{a}_{2i}=\epsilon_{1}\,a_{2i},\qquad\bar{a}_{3i}=\epsilon_{1}\,a_{3i},\qquad\bar{a}_{4i}=\epsilon_{1}\epsilon_{2}\,a_{4i},\label{semi-inv-aij}
\end{equation}
with $\epsilon_{1}=\mathop{\mathrm{sgn}}J_{\phi}$ and $\epsilon_{2}=\mathop{\mathrm{sgn}}\mathop{\mathrm{det}}\alpha_{j}^{i}$,
whereas the coefficients $p_{hi}=\omega_{h}(\partial_{z^{i}})$ transform
as 
\begin{equation}
p_{3i}=\epsilon_{1}\,\bar{p}_{3s}\alpha_{i}^{s},\qquad p_{4i}=\epsilon_{1}\epsilon_{2}\,\bar{p}_{4s}\alpha_{i}^{s},\label{eq:semi-inv-bij}
\end{equation}
with $(\alpha_{j}^{i})\in \mathrm{GL}(2,\mathbb{R})$ such that 
\begin{equation}
\left(\begin{array}{@{}ll@{}}
\alpha_{1}^{1}\: & \alpha_{2}^{1}\vspace{5pt}\\
\alpha_{1}^{2}\: & \alpha_{2}^{2}
\end{array}\right)=\epsilon_{1}\,\left(\begin{array}{@{}ll@{}}
\bar{\mathcal{C}}^{1}\: & \epsilon_{2}\left(\bar{\mathcal{C}}^{\perp}\right)^{1}\vspace{5pt}\\
\bar{\mathcal{C}}^{2}\: & \epsilon_{2}\left(\bar{\mathcal{C}}^{\perp}\right)^{2}
\end{array}\right)_{\mathrm{invar.}}\left(\begin{array}{@{}ll@{}}
\mathcal{C}^{1} & \:\left(\mathcal{C}^{\perp}\right)^{1}\vspace{5pt}\\
\mathcal{C}^{2} & \:\left(\mathcal{C}^{\perp}\right)^{2}
\end{array}\right)_{\mathrm{invar.}}^{-1}.\label{cond_matr_const}
\end{equation}
\end{lem}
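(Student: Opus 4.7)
The plan is to deduce the transformation laws \eqref{semi-inv-aij}, \eqref{eq:semi-inv-bij}, and the matrix identity \eqref{cond_matr_const} from the semi-invariance of the orthogonal coframe $\{\omega_1,\omega_2,\omega_3,\omega_4\}$ recorded in \eqref{eq:coframe_transf}, combined with the fact that $\mathcal{I}^1,\mathcal{I}^2$ are $\mathfrak{G}$-invariant scalar functions. The spirit of the argument parallels the calculation carried out in Subsection~9.1 that led from Proposition~\ref{g_frame} to \eqref{eq:alpha_matrix}--\eqref{eq:Jacobi_matr_psi}, but is now expressed in the adapted coordinate systems $(\mathcal{I}^1,\mathcal{I}^2,z^1,z^2)$ and $(\bar{\mathcal{I}}^1,\bar{\mathcal{I}}^2,\bar{z}^1,\bar{z}^2)$.

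First I would rewrite a generic pseudogroup transformation $P$ of the form \eqref{eq:psgroup_fin} in these new coordinates. The invariance of $\mathcal{I}^i$ gives $\bar{\mathcal{I}}^i\circ P = \mathcal{I}^i$, hence $P^{*}(d\bar{\mathcal{I}}^i)=d\mathcal{I}^i$, while $\bar z^i = \alpha^i_j z^j + \tilde\psi^i(\mathcal I)$ with $\tilde\psi^i(\mathcal I):=\psi^i(m(\mathcal I))$, so $P^{*}(d\bar z^i) = \alpha^i_j\,dz^j + (\partial\tilde\psi^i/\partial\mathcal I^l)\,d\mathcal I^l$. Next I would apply $P^{*}$ to the decomposition $\bar\omega_h=\bar a_{hi}\,d\bar{\mathcal I}^i+\bar p_{hi}\,d\bar z^i$ and equate the result, via \eqref{eq:coframe_transf}, to $\epsilon_h\,\omega_h = \epsilon_h(a_{hi}\,d\mathcal I^i + p_{hi}\,dz^i)$ with $\epsilon_1=1$, $\epsilon_2=\epsilon_3=\mathop{\mathrm{sgn}}J_\phi$, $\epsilon_4=\mathop{\mathrm{sgn}}J_\phi\cdot\mathop{\mathrm{sgn}}\det\alpha^i_j$. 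Matching coefficients of $dz^j$ yields $\bar p_{hi}\,\alpha^i_j=\epsilon_h p_{hj}$, trivially $0=0$ for $h=1,2$ and giving \eqref{eq:semi-inv-bij} for $h=3,4$. Matching coefficients of $d\mathcal I^l$ for $h=1,2$ immediately gives $\bar a_{1l}=a_{1l}$ and $\bar a_{2l}=\epsilon_1\,a_{2l}$, as the absence of $\bar p_{1i},\bar p_{2i}$ kills the $\psi$-contribution in those rows.

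For $h=3,4$, the naive coefficient match produces an apparent cross term $\bar p_{hi}(\partial\tilde\psi^i/\partial\mathcal I^l)$; its cancellation rests on the structural observation that the connection-like 1-forms $dz^k+f^k_j\,dt^j$ transform cleanly under $P$ as $d\bar z^k+\bar f^k_j\,d\bar t^j = \alpha^k_l(dz^l+f^l_j\,dt^j)$, the $\psi$-contribution being absorbed into the transformation law $\alpha^k_l f^l_s = \bar f^k_j(\partial\phi^j/\partial t^s)+\partial\psi^k/\partial t^s$ that one reads from \eqref{cond_equiv}. Since \eqref{omega34} presents $\omega_3,\omega_4$ as $\mathcal I$-dependent linear combinations of precisely these vertical 1-forms, feeding the clean $\alpha$-transformation back gives $\bar a_{3i}=\epsilon_1 a_{3i}$ and $\bar a_{4i}=\epsilon_1\epsilon_2 a_{4i}$.

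Finally, the matrix identity \eqref{cond_matr_const} for $(\alpha^i_j)$ follows in parallel: evaluating the vertical semi-invariances $P_{*}(\mathcal C)=\epsilon_1\bar{\mathcal C}$ and $P_{*}(\mathcal C^\perp)=\epsilon_1\epsilon_2\bar{\mathcal C}^\perp$ in the bases $\{\partial_{z^k}\}$ and $\{\partial_{\bar z^i}\}$ via $P_{*}(\partial_{z^k})=\alpha^i_k\partial_{\bar z^i}$ yields the claimed matrix equation, the ``invar.'' decoration signifying evaluation at $\bar t=\phi(m(\mathcal I))$. The independence of $(\alpha^i_j)$ from the base coordinates is already guaranteed by Proposition~\ref{prop:pseudogroup}. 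The main delicate step is the $h=3,4$ case above: verifying explicitly that the $\psi$-dependent cross term in the $d\mathcal I^l$-coefficient is absorbed by the $f^k_j$-transformation is what carries the real content, while the remaining matches are routine bookkeeping.
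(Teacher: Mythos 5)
Your overall strategy --- pull back the decomposition $\bar\omega_h=\bar a_{hi}\,d\bar{\mathcal I}^i+\bar p_{hi}\,d\bar z^i$ along $P$, use $P^{*}(d\bar{\mathcal I}^i)=d\mathcal I^i$ and $P^{*}(d\bar z^i)=\alpha^i_j\,dz^j+(\partial\tilde\psi^i/\partial\mathcal I^l)\,d\mathcal I^l$, and match coefficients against \eqref{eq:coframe_transf} --- is the natural way to flesh out the paper's one-line proof, and it does correctly yield \eqref{eq:semi-inv-bij}, the cases $h=1,2$ of \eqref{semi-inv-aij}, and \eqref{cond_matr_const} (the latter exactly as in \eqref{eq:alpha_matrix} of the first method).

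However, the step you yourself flag as carrying ``the real content'' --- the cancellation of the cross term $\bar p_{hi}\,\partial\tilde\psi^i/\partial\mathcal I^l$ for $h=3,4$ --- is not established by the argument you give, and as described it would fail. The observation that the vertical $1$-forms $\theta^k=dz^k+f^k_j\,dt^j$ pull back as $P^{*}(d\bar z^k+\bar f^k_j\,d\bar t^j)=\alpha^k_l\,\theta^l$ is, once combined with the transformation law of the matrix $\bigl(\mathcal C\,\big|\,\mathcal C^\perp\bigr)$, logically equivalent to the semi-invariance $P^{*}\bar\omega_h=\epsilon_h\,\omega_h$ from which you started; re-invoking it cannot remove a term that arises purely from re-expanding in the mixed basis $\{d\mathcal I^i,dz^j\}$. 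Carrying your own coefficient match to the end gives, for $h=3,4$,
\[
\bar a_{hl}+\bar p_{hi}\,\frac{\partial\tilde\psi^i}{\partial\mathcal I^l}=\epsilon_h\,a_{hl},
\qquad \epsilon_3=\epsilon_1,\ \epsilon_4=\epsilon_1\epsilon_2,
\]
and since the $2\times2$ matrix $(\bar p_{hi})_{h=3,4;\,i=1,2}$ is invertible, the extra term vanishes for every $l$ only when $\tilde\psi^i=\psi^i\circ m$ is constant. So your derivation, done to completion, produces an additional $\psi$-dependent term in the law for $a_{3i},a_{4i}$ rather than cancelling it; the formulas \eqref{semi-inv-aij} for $h=3,4$ hold as stated only for pseudogroup elements whose $\psi$-part is constant in the invariant coordinates (which is the situation in which the lemma is actually used in the subsequent equivalence theorem, where $P$ is taken of the form $\bar z^i=\alpha^i_j z^j$). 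The paper's own proof does not address this point either, but your write-up explicitly promises a cancellation that the stated mechanism does not deliver: you need either to restrict the class of transformations or to verify the vanishing of $\bar p_{hi}\,\partial\tilde\psi^i/\partial\mathcal I^l$ by some genuinely new input, which the transformation law of $f^k_j$ alone does not provide.
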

\begin{proof}
Equations (\ref{semi-inv-aij}) and (\ref{eq:semi-inv-bij}) readily
follow by (\ref{eq:coframe_transf}). On the other hand, when $\bar{\omega}_{3}$,
$\bar{\omega}_{4}$ are obtained by $\omega_{3}$, $\omega_{4}$ through
the pseudo-group action (\ref{eq:psgroup_fin}), equations (\ref{semi-inv-aij})
and (\ref{eq:semi-inv-bij}) entail (\ref{cond_matr_const}).
\end{proof}
\medskip{}

Notice that (\ref{cond_matr_const})
is the same condition (\ref{eq:alpha_matrix})
obtained in the first method. Here one also has the following

\begin{lem}
\label{Rem_equiv} The fact that right hand side of \eqref{cond_matr_const}
is an element of\/ $\mathrm{GL}(2,\mathbb{R})$ is equivalent
to the following condition 
\begin{equation}
\begin{array}{l}
\left(\begin{array}{@{}ll@{}}
\bar{\mathcal{C}}^{1}\: & \epsilon_{2}\left(\bar{\mathcal{C}}^{\perp}\right)^{1}\vspace{5pt}\\
\bar{\mathcal{C}}^{2}\; & \epsilon_{2}\left(\bar{\mathcal{C}}^{\perp}\right)^{2}
\end{array}\right)_{\mathrm{invar.}}^{-1}\dfrac{\partial}{\partial{\mathcal{I}^{s}}}\left(\begin{array}{@{}ll@{}}
\bar{\mathcal{C}}^{1}\; & \epsilon_{2}\left(\bar{\mathcal{C}}^{\perp}\right)^{1}\vspace{5pt}\\
\bar{\mathcal{C}}^{2}\: & \epsilon_{2}\left(\bar{\mathcal{C}}^{\perp}\right)^{2}
\end{array}\right)_{\mathrm{invar.}}=\vspace{10pt}\\
\qquad\qquad\qquad\qquad=\left(\begin{array}{@{}ll@{}}
\mathcal{C}^{1}\: & \left(\mathcal{C}^{\perp}\right)^{1}\vspace{5pt}\\
\mathcal{C}^{2}\: & \left(\mathcal{C}^{\perp}\right)^{2}
\end{array}\right)_{\mathrm{invar.}}^{-1}\dfrac{\partial}{\partial{\mathcal{I}^{s}}}\left(\begin{array}{@{}ll@{}}
\mathcal{C}^{1}\: & \left(\mathcal{C}^{\perp}\right)^{1}\vspace{5pt}\\
\mathcal{C}^{2}\: & \left(\mathcal{C}^{\perp}\right)^{2}
\end{array}\right)_{\mathrm{invar.}},\qquad s=1,2.
\end{array}\label{invar_epsilon_matr}
\end{equation}
In particular, under transformations preserving the orientations of
the leaves of $\Xi$, \eqref{invar_epsilon_matr} is equivalent to
the invariance of the matrices 
\begin{equation}
\left(\begin{array}{@{}ll@{}}
\mathcal{C}^{1}\: & \left(\mathcal{C}^{\perp}\right)^{1}\vspace{5pt}\\
\mathcal{C}^{2}\: & \left(\mathcal{C}^{\perp}\right)^{2}
\end{array}\right)_{\mathrm{invar.}}^{-1}\frac{\partial}{\partial{\mathcal{I}^{s}}}\left(\begin{array}{@{}ll@{}}
\mathcal{C}^{1}\: & \left(\mathcal{C}^{\perp}\right)^{1}\vspace{5pt}\\
\mathcal{C}^{2}\: & \left(\mathcal{C}^{\perp}\right)^{2}
\end{array}\right)_{\mathrm{invar.}},\qquad s=1,2.\label{invar_matr}
\end{equation}
Moreover, by introducing the functions 
\[
\mathfrak{c}_{11}^{s}=\frac{\left(\mathcal{C}^{\perp}\right)^{2}{\displaystyle \frac{\partial\mathcal{C}^{1}}{\partial\mathcal{I}^{s}}}-\left(\mathcal{C}^{\perp}\right)^{1}{\displaystyle \frac{\partial\mathcal{C}^{2}}{\partial\mathcal{I}^{s}}}}{\mathcal{C}^{1}\left(\mathcal{C}^{\perp}\right)^{2}-\mathcal{C}^{2}\left(\mathcal{C}^{\perp}\right)^{1}},\qquad\mathfrak{c}_{22}^{s}=\frac{\mathcal{C}^{1}{\displaystyle \frac{\partial\left(\mathcal{C}^{\perp}\right)^{2}}{\partial\mathcal{I}^{s}}}-\mathcal{C}^{2}{\displaystyle \frac{\partial\left(\mathcal{C}^{\perp}\right)^{1}}{\partial\mathcal{I}^{s}}}}{\mathcal{C}^{1}\left(\mathcal{C}^{\perp}\right)^{2}-\mathcal{C}^{2}\left(\mathcal{C}^{\perp}\right)^{1}},
\]
\[
\mathfrak{c}_{12}^{s}=\frac{\left(\mathcal{C}^{\perp}\right)^{2}{\displaystyle \frac{\partial\left(\mathcal{C}^{\perp}\right)^{1}}{\partial\mathcal{I}^{s}}}-\left(\mathcal{C}^{\perp}\right)^{1}{\displaystyle \frac{\partial\left(\mathcal{C}^{\perp}\right)^{2}}{\partial\mathcal{I}^{s}}}}{\mathcal{C}^{1}\left(\mathcal{C}^{\perp}\right)^{2}-\mathcal{C}^{2}\left(\mathcal{C}^{\perp}\right)^{1}},\qquad\mathfrak{c}_{21}^{s}=\frac{\mathcal{C}^{1}{\displaystyle \frac{\partial\mathcal{C}^{2}}{\partial\mathcal{I}^{s}}}-\mathcal{C}^{2}{\displaystyle \frac{\partial\mathcal{C}^{1}}{\partial\mathcal{I}^{s}}}}{\mathcal{C}^{1}\left(\mathcal{C}^{\perp}\right)^{2}-\mathcal{C}^{2}\left(\mathcal{C}^{\perp}\right)^{1}}.
\]
where $s=1,2$, condition \eqref{invar_epsilon_matr} is equivalent to the identities
\begin{equation}
\label{cijs_inv}
\mathfrak{c}_{11}^{s}=\bar{\mathfrak{c}}_{11}^{s},\qquad\mathfrak{c}_{22}^{s}=\bar{\mathfrak{c}}_{22}^{s},\qquad\mathfrak{c}_{12}^{s}=\epsilon_{2}\bar{\mathfrak{c}}_{12}^{s},\qquad\mathfrak{c}_{21}^{s}=\epsilon_{2}\bar{\mathfrak{c}}_{21}^{s},
\end{equation}

\end{lem}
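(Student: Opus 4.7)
The plan is to reduce the lemma to a straightforward differential-algebraic manipulation of $2\times 2$ matrices. Write
\[
M = \left(\begin{array}{ll}
\mathcal{C}^{1} & \left(\mathcal{C}^{\perp}\right)^{1}\\
\mathcal{C}^{2} & \left(\mathcal{C}^{\perp}\right)^{2}
\end{array}\right)_{\mathrm{invar.}}, \qquad
\bar M_{\epsilon_{2}} = \left(\begin{array}{ll}
\bar{\mathcal{C}}^{1} & \epsilon_{2}\left(\bar{\mathcal{C}}^{\perp}\right)^{1}\\
\bar{\mathcal{C}}^{2} & \epsilon_{2}\left(\bar{\mathcal{C}}^{\perp}\right)^{2}
\end{array}\right)_{\mathrm{invar.}},
\]
so that the right-hand side of \eqref{cond_matr_const} reads $\epsilon_{1}\,\bar M_{\epsilon_{2}} M^{-1}$, both factors now being viewed as matrix-valued functions of the common coordinates $(\mathcal{I}^{1},\mathcal{I}^{2})$ via the substitutions $t^{i}=m^{i}(\mathcal I)$ and $\bar t^{i}=\bar m^{i}(\bar{\mathcal I})$ with $\bar{\mathcal I}^{i}=\mathcal I^{i}$. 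Since $(\alpha_{j}^{i})\in\mathrm{GL}(2,\mathbb{R})$ means that the matrix is constant (invertibility is automatic, as $M$ and $\bar M_{\epsilon_{2}}$ are invertible wherever defined), membership is equivalent to $\partial_{s}(\bar M_{\epsilon_{2}} M^{-1})=0$ for $s=1,2$, with $\epsilon_{1}$ dropping out as a nonzero constant.

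Next I would apply the product rule together with $\partial_{s}M^{-1}=-M^{-1}(\partial_{s}M)M^{-1}$, obtaining $\partial_{s}\bar M_{\epsilon_{2}}\cdot M^{-1}-\bar M_{\epsilon_{2}} M^{-1}(\partial_{s}M)M^{-1}=0$. Multiplying on the left by $(\bar M_{\epsilon_{2}})^{-1}$ and on the right by $M$ rearranges this to
\[
(\bar M_{\epsilon_{2}})^{-1}\partial_{s}\bar M_{\epsilon_{2}} \;=\; M^{-1}\partial_{s}M,
\]
which is exactly \eqref{invar_epsilon_matr}. Both implications follow from the same identity, establishing the first equivalence.

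For the specialization, decompose $\bar M_{\epsilon_{2}}=\bar M_{+}\,\mathrm{diag}(1,\epsilon_{2})$. Since $\mathrm{diag}(1,\epsilon_{2})$ is constant and $\epsilon_{2}^{-1}=\epsilon_{2}$, conjugation by it preserves the diagonal entries of a $2\times 2$ matrix and multiplies the off-diagonal entries by $\epsilon_{2}$. In the orientation-preserving case $\epsilon_{2}=+1$ this degenerates to $\bar M_{+}^{-1}\partial_{s}\bar M_{+}=M^{-1}\partial_{s}M$, i.e.\ invariance of the matrix in \eqref{invar_matr}. For the final reformulation in terms of the $\mathfrak c^{s}_{ij}$, I would use Cramer's rule to expand $M^{-1}=(\det M)^{-1}\mathrm{adj}(M)$ with $\det M=\mathcal C^{1}(\mathcal C^{\perp})^{2}-\mathcal C^{2}(\mathcal C^{\perp})^{1}$, then multiply out $M^{-1}\partial_{s}M$ entry by entry and check that the four entries coincide with $\mathfrak c^{s}_{11},\mathfrak c^{s}_{12},\mathfrak c^{s}_{21},\mathfrak c^{s}_{22}$ exactly as defined in the statement. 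Combined with the diagonal-conjugation sign behaviour, this yields the identities \eqref{cijs_inv}.

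The hard part is not conceptual but bookkeeping: carefully tracking the ``$\mathrm{invar.}$'' substitution so that the chain-rule derivatives $\partial/\partial\mathcal I^{s}$ are legitimate on both sides of \eqref{invar_epsilon_matr}, and tracking the signs $\epsilon_{1},\epsilon_{2}$ (noting that $\epsilon_{1}$ disappears because it is a nonzero scalar, whereas $\epsilon_{2}$ survives as the diagonal factor $\mathrm{diag}(1,\epsilon_{2})$ that produces the off-diagonal signs in \eqref{cijs_inv}). No nontrivial analysis is required beyond the standard inverse-derivative identity and Cramer's rule.
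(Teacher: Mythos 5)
Your proposal is correct and follows essentially the same route as the paper's own (much terser) proof: constancy of $\epsilon_1\,\bar M_{\epsilon_2}M^{-1}$ is differentiated to yield \eqref{invar_epsilon_matr}, the converse is obtained by integration, and the identification of the entries of $M^{-1}\partial_s M$ with the $\mathfrak{c}^s_{ij}$ together with the conjugation by $\mathrm{diag}(1,\epsilon_2)$ is the ``straightforward computation'' the paper alludes to. You have simply supplied the bookkeeping the authors omit, and it all checks out.
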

\begin{proof}
By differentiating (\ref{cond_matr_const}) with respect to
$\mathcal{I}^{s}$ and observing that $\mathcal{I}^{s}=\bar{\mathcal{I}}^{s}$, one gets (\ref{invar_epsilon_matr}). Conversely, (\ref{cond_matr_const}) follows by integrating (\ref{invar_epsilon_matr}). The rest of the proof follows by straightforward computations.\end{proof}

Now the following remark is in order.

\begin{rem}
\label{rem:RMK}In view of (\ref{cond_matr_const}), transformation
formulas (\ref{eq:semi-inv-bij}) are equivalent to the following identities
\[
\mathfrak{p}_{3}=\epsilon_{1}\bar{\mathfrak{p}}_{3},\qquad\mathfrak{p}_{4}=\epsilon_{2}\bar{\mathfrak{p}}_{4},\qquad\mathfrak{p}_{3}^{\perp}=\epsilon_{1}\epsilon_{2}\bar{\mathfrak{p}}_{3}^{\perp},\qquad\mathfrak{p}_{4}^{\perp}=\bar{\mathfrak{p}}_{4}^{\perp},
\]
where $\mathfrak{p}_{a}=p_{as}\mathcal{C}^{s}$, $\mathfrak{p}_{a}^{\perp}=p_{as}\left(\mathcal{C}^{\perp}\right)^{s}$,
$a=3,4$. On the other hand, \eqref{invar_epsilon_matr} is equivalent to the identities \eqref{cijs_inv}.
Thus, one gets that (\ref{semi-inv-aij}),
(\ref{eq:semi-inv-bij}) and \eqref{cond_matr_const} are equivalent to the invariance of the $18$
functions 
\begin{equation}
\begin{array}{lllllll}
a_{1i},\quad & \mathfrak{p_{4}^{\perp},\quad} & \left(a_{2i}\right)^{2},\quad & \left(a_{3i}\right)^{2},\quad & \left(a_{4i}\right)^{2},\quad & a_{2i}a_{3i,\quad} & \left(\mathfrak{p}_{3}\right)^{2},\vspace{5pt}\\
\left(\mathfrak{p}_{3}^{\perp}\right)^{2},\quad & \left(\mathfrak{p}_{4}\right)^{2},\quad & \mathfrak{p}_{3}a_{2i},\quad & \mathfrak{p}_{3}a_{3i},\quad & \mathfrak{p}_{3}^{\perp}a_{4i},\quad & \mathfrak{p}_{3}\mathfrak{p}_{4}a_{4i},\quad & \mathfrak{p}_{3}\mathfrak{p}_{3}^{\perp}\mathfrak{p}_{4},\vspace{5pt}\\
\left(\mathfrak{c}_{12}^{s}\right)^{2},\quad & \left(\mathfrak{c}_{21}^{s}\right)^{2},\quad& \mathfrak{c}_{12}^{s}\mathfrak{p}_{4},\quad &\mathfrak{c}_{21}^{s}\mathfrak{p}_{4}

\end{array}\label{eq:invarsfin}
\end{equation}
with $i,s=1,2$.
\end{rem}
\bigskip{}

Then one has the following
\begin{thm}
Two metrics 
\[
\mathbf{g}=b_{ij}(t^{1},t^{2})\,dt^{i}\,dt^{j}+2f_{ik}(t^{1},t^{2})\,dt^{i}\,dz^{k}+h_{kl}(t^{1},t^{2})\,dz^{k}\,dz^{l}
\]
and 
\[
\bar{\mathbf{g}}=\bar{b}_{mn}(\bar{t}^{1},\bar{t}^{2})\,d\bar{t}^{m}\,d\bar{t}^{n}+2\bar{f}_{mr}(\bar{t}^{1},\bar{t}^{2})\,d\bar{t}^{m}\,d\bar{z}^{r}+\bar{h}_{rs}(\bar{t}^{1},\bar{t}^{2})\,d\bar{z}^{r}\,d\bar{z}^{s},
\]
with $\ell_{\mathcal{C}}C_{\rho}\neq0$ and $\bar{\ell}_{\bar{\mathcal{C}}}\bar{C}_{\bar{\rho}}\neq0$, are equivalent
if and only if there are two systems $\{\mathcal{I}^{1}(t^{1},t^{2}),\mathcal{I}^{2}(t^{1},t^{2})\}$
and $\{\bar{\mathcal{I}}^{1}(\bar{t}^{1},\bar{t}^{2}),\bar{\mathcal{I}}^{2}(\bar{t}^{1},\bar{t}^{2})\}$
of functionally independent first-order scalar differential invariants
on $\mathcal{S}$, such that the six fundamental first-order scalar differential invariants
of $\mathbf{g}$ depend on $(\mathcal{I}^{1},\mathcal{I}^{2})$ in
the same way as the corresponding six fundamental invariants of $\bar{\mathbf{g}}$
depend on $(\bar{\mathcal{I}}^{1},\bar{\mathcal{I}}^{2})$.
\end{thm}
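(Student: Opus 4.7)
The plan is to prove the two implications separately, with the forward implication being essentially immediate from the definition of scalar differential invariants and the reverse implication requiring the machinery of Lemma~\ref{lema_equiv} and Remark~\ref{rem:RMK}.

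For the ``only if'' direction: if there is a pseudogroup transformation $P \in \mathfrak{G}$ with $P^{*}\bar{\mathbf{g}} = \mathbf{g}$, then for every scalar differential invariant $I$ one has $I[\mathbf{g}] = \bar{I}[\bar{\mathbf{g}}] \circ P$. Choosing any two functionally independent invariants $\mathcal{I}^{1}, \mathcal{I}^{2}$ on $\mathcal{S}$ and taking $\bar{\mathcal{I}}^{i} := \mathcal{I}^{i} \circ P^{-1}$, one gets that if $I_{j}[\mathbf{g}] = F_{j}(\mathcal{I}^{1},\mathcal{I}^{2})$ then $\bar{I}_{j}[\bar{\mathbf{g}}] = F_{j}(\bar{\mathcal{I}}^{1},\bar{\mathcal{I}}^{2})$ with the same function $F_{j}$, for all six fundamental invariants.

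For the ``if'' direction, which is the substantive content, the plan is to use $\{\mathcal{I}^{1},\mathcal{I}^{2},z^{1},z^{2}\}$ and $\{\bar{\mathcal{I}}^{1},\bar{\mathcal{I}}^{2},\bar{z}^{1},\bar{z}^{2}\}$ as new adapted coordinates on $\mathcal{M}$ and $\bar{\mathcal{M}}$ respectively (legitimate by the functional independence and the implicit function theorem), and then identify $\bar{\mathcal{I}}^{i}$ with $\mathcal{I}^{i}$. By Proposition~\ref{prop:main_1ord} every first-order invariant is a function of $I_{1},\dots,I_{6}$, so the hypothesis on the six fundamental invariants forces every first-order invariant of $\mathbf{g}$ to depend on $(\mathcal{I}^{1},\mathcal{I}^{2})$ in exactly the same way as the corresponding invariant of $\bar{\mathbf{g}}$ depends on $(\bar{\mathcal{I}}^{1},\bar{\mathcal{I}}^{2})$. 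In particular, the $18$ invariant combinations listed in~\eqref{eq:invarsfin} coincide when $\bar{\mathcal{I}}^{i}=\mathcal{I}^{i}$. By Lemma~\ref{lema_equiv} and Remark~\ref{rem:RMK}, this coincidence is precisely equivalent to the existence of signs $\epsilon_{1},\epsilon_{2}$ and a constant matrix $(\alpha^{i}_{j}) \in \mathrm{GL}(2,\mathbb{R})$ given by~\eqref{cond_matr_const} such that the semi-invariant co-frame coefficients $a_{hi}, p_{hi}$ of $\mathbf{g}$ and $\bar{\mathbf{g}}$ transform into each other under the rules~\eqref{semi-inv-aij}--\eqref{eq:semi-inv-bij}.

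From here the construction of the equivalence $P$ is mechanical: define $\phi^{i}$ by $\bar{t}^{\,i} = \bar{m}^{i}(\mathcal{I}^{1}(t),\mathcal{I}^{2}(t))$, read off $(\alpha^{i}_{j})$ from~\eqref{cond_matr_const}, and determine $\psi^{i}(t^{1},t^{2})$ by integrating the first-order system~\eqref{eq:Jacobi_matr_psi} of the first method. Expressing both metrics in the frame $\{\mathcal{H},\mathcal{H}^{\bot},\mathcal{C},\mathcal{C}^{\bot}\}$ as in Proposition~\ref{g_frame},
\[
\mathbf{g}=\ell_{\mathcal{H}}\omega_{1}^{2}+\ell_{\mathcal{H}^{\perp}}\omega_{2}^{2}+\ell_{\mathcal{C}}\omega_{3}^{2}+\ell_{\mathcal{C}^{\perp}}\omega_{4}^{2},
\]
the coefficients $\ell_{\mathcal{H}}, \ell_{\mathcal{H}^{\perp}}, \ell_{\mathcal{C}}, \ell_{\mathcal{C}^{\perp}}$ are (up to signs depending on $\det\tilde{\mathbf{g}}, \det\mathbf{h}$) invariants, so they automatically match between $\mathbf{g}$ and $\bar{\mathbf{g}}$ by hypothesis, and the $\omega_{h}$ are recovered from the $a_{hi}, p_{hi}$ via~\eqref{eq:equiv_omega1-2}--\eqref{eq:equiv_omega3-4}. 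The main obstacle I expect is verifying the integrability of the $\psi^{i}$-system~\eqref{eq:Jacobi_matr_psi} and checking that the resulting $P$ genuinely pulls $\bar{\mathbf{g}}$ back to $\mathbf{g}$ (i.e., that the relations~\eqref{cond_equiv} are satisfied); both of these reduce, through Lemma~\ref{lema_equiv} and Remark~\ref{rem:RMK}, to the invariance identities~\eqref{eq:invarsfin} and the commutation of mixed partials of $\phi^{i}$, which in turn follow from the coincidence of the functional expressions of all first-order invariants in $(\mathcal{I}^{1},\mathcal{I}^{2})$ and $(\bar{\mathcal{I}}^{1},\bar{\mathcal{I}}^{2})$.
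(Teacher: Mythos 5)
Your forward direction and the first two-thirds of your converse coincide with the paper's own argument (its ``second method''): pass to the adapted coordinates $\{\mathcal{I}^{1},\mathcal{I}^{2},z^{1},z^{2}\}$, use Proposition~\ref{prop:main_1ord} to propagate the hypothesis from the six fundamental invariants to \emph{all} first-order invariants, and conclude via Lemma~\ref{lema_equiv}, Lemma~\ref{Rem_equiv} and Remark~\ref{rem:RMK} that the matching of the $18$ quantities in \eqref{eq:invarsfin} yields a constant matrix $(\alpha^{i}_{j})\in\mathrm{GL}(2,\mathbb{R})$ from \eqref{cond_matr_const} and the co-frame transformation rules \eqref{semi-inv-aij}--\eqref{eq:semi-inv-bij}. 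Up to that point you are on the paper's track.

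The gap is in your final assembly of the isometry. You route the construction back through the \emph{first} method: you propose to recover $\psi^{i}(t^{1},t^{2})$ by integrating the system \eqref{eq:Jacobi_matr_psi} and then to verify \eqref{cond_equiv}, and you merely assert that the integrability of that system ``reduces to'' the invariance identities \eqref{eq:invarsfin}. That compatibility condition (equality of mixed partials of the right-hand side of \eqref{eq:Jacobi_matr_psi}) is a genuine analytic claim that you neither state precisely nor prove, so as written the argument is incomplete. The point of the second method --- and of the paper's proof --- is that this step is unnecessary: in the coordinates $\{\mathcal{I}^{1},\mathcal{I}^{2},z^{1},z^{2}\}$ the candidate isometry is simply $P:\ \bar{\mathcal{I}}^{i}=\mathcal{I}^{i}$, $\bar{z}^{i}=\alpha^{i}_{j}z^{j}$, with no $\psi^{i}$ to integrate at all; the contribution that would otherwise sit in $\psi^{i}$ is already absorbed into the coefficients $a_{3i},a_{4i}$ of \eqref{eq:equiv_omega3-4}. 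Relations \eqref{semi-inv-aij}--\eqref{eq:semi-inv-bij} then say precisely that $P^{*}\bar{\omega}_{h}=\pm\omega_{h}$, and since $\mathbf{g}=\ell_{\mathcal{H}}\omega_{1}^{2}+\ell_{\mathcal{H}^{\perp}}\omega_{2}^{2}+\ell_{\mathcal{C}}\omega_{3}^{2}+\ell_{\mathcal{C}^{\perp}}\omega_{4}^{2}$ with coefficients that are (signed) invariants matching by hypothesis, $P^{*}\bar{\mathbf{g}}=\mathbf{g}$ follows immediately, with the signs disappearing upon squaring. Replacing your last step by this observation closes the gap.
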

\begin{proof}
If $\mathbf{g}$ and $\bar{\mathbf{g}}$ are two
equivalent metrics such that $\ell_{\mathcal{C}}C_{\rho}\neq0$ and $\bar{\ell}_{\bar{\mathcal{C}}}\bar{C}_{\bar{\rho}}\neq0$,
then there are certainly two systems $\{\mathcal{I}^{1}(t^{1},t^{2}),\mathcal{I}^{2}(t^{1},t^{2})\}$
and $\{\bar{\mathcal{I}}^{1}(\bar{t}^{1},\bar{t}^{2}),\bar{\mathcal{I}}^{2}(\bar{t}^{1},\bar{t}^{2})\}$
of functionally independent first-order scalar differential invariants
on $\mathcal{S}$ such that the six fundamental first-order scalar differential invariants of $\mathbf{g}$ depend on $(\mathcal{I}^{1},\mathcal{I}^{2})$ in the same way as the corresponding six fundamental invariants of $\bar{\mathbf{g}}$ depend on $(\bar{\mathcal{I}}^{1},\bar{\mathcal{I}}^{2})$.

For the proof of the converse one needs first to observe that, in the
generic case, all first-order scalar differential invariants are functions
of the six fundamental scalar differential invariants. Hence, when
restricting to a metric $\mathbf{g}$ with two functionally independent
scalar differential invariants $(\mathcal{I}^{1},\mathcal{I}^{2})$,
all first-order scalar differential invariants become functions of
$(\mathcal{I}^{1},\mathcal{I}^{2})$. Then, under the given assumptions, all
first-order scalar differential invariants of $\mathbf{g}$ depend
on $(\mathcal{I}^{1},\mathcal{I}^{2})$ in the same way as all the
corresponding first-order scalar differential invariants of $\bar{\mathbf{g}}$
depend on $(\bar{\mathcal{I}}^{1},\bar{\mathcal{I}}^{2})$. As a consequence, under the transformation $\{\bar{\mathcal{I}}^{1}=\mathcal{I}^{1},\, \bar{\mathcal{I}}^{2}=\mathcal{I}^{2}\}$,
 the invariants (\ref{eq:invarsfin}) for $\mathbf{g}$ transform
to the corresponding invariants for $\bar{\mathbf{g}}$. Hence, in view of Lemma \ref{lema_equiv}, Lemma \ref{Rem_equiv} and Remark \ref{rem:RMK}, the matrix $(\alpha_{j}^{i})$ defined by \eqref{cond_matr_const} belongs to $\mathrm{GL}(2,\mathbb{R})$ and defines the adapted coordinate transformation 
\[
P:\;\left\{ \begin{array}{l}
\bar{\mathcal{I}}^{1}=\mathcal{I}^{1}\\
\bar{\mathcal{I}}^{2}=\mathcal{I}^{2}\\
\bar{z}^{i}=\alpha_{j}^{i}z^{j}
\end{array}\right.
\]
by which $\left\{ \omega_{1},\omega_{2},\omega_{3},\omega_{4}\right\} $ transforms to $\left\{ \bar{\omega}_{1},\bar{\omega}_{2},\bar{\omega}_{3},\bar{\omega}_{4}\right\} $. It follows that $P$ is an
isometry between $\mathbf{g}$ and $\bar{\mathbf{g}}$: 
\[
P^{*}(\bar{\mathbf{g}})=P^{*}\left(\bar{\ell}_{\bar{\mathcal{H}}}\bar{\omega}_{1}^{2}+\bar{\ell}_{\bar{\mathcal{H}}^{\perp}}\bar{\omega}_{2}^{2}+\bar{\ell}_{\bar{\mathcal{C}}}\bar{\omega}_{3}^{2}+\bar{\ell}_{\bar{\mathcal{C}}^{\perp}}\bar{\omega}_{4}^{2}\right)=\ell_{\mathcal{H}}\omega_{1}^{2}+\ell_{\mathcal{H}^{\perp}}\omega_{2}^{2}+\ell_{\mathcal{C}}\omega_{3}^{2}+\ell_{\mathcal{C}^{\perp}}\omega_{4}^{2}=\mathbf{g}.
\]
\end{proof}

\begin{cor}
The equivalence class of a metric $\mathbf{g}$ such that $\ell_{\mathcal{C}}C_{\rho}\neq0$ is completely characterised
by the way the six fundamental first-order scalar differential invariants
$I_{1}=C_{\rho}$,$I_{2}=C_{\chi}$, $I_{3}=Q_{\chi}$, $I_{4}=Q_{\gamma}$,
$I_{5}=\ell_{\mathcal{C}}$, $I_{6}=(\Theta_{\mathrm{I}})^{2}$ 
depend on two functionally independent first-order scalar differential
invariants $(\mathcal{I}^{1},\mathcal{I}^{2})$. 
\end{cor}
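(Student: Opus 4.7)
The plan is to derive this corollary directly from the preceding theorem, which has already established both directions of the equivalence criterion. The first step is to observe that the theorem gives a necessary-and-sufficient characterisation of isometry in the regime $\ell_{\mathcal{C}}C_{\rho}\neq0$: two such metrics $\mathbf{g},\bar{\mathbf{g}}$ are equivalent if and only if there exist choices of functionally independent first-order scalar invariants $(\mathcal{I}^{1},\mathcal{I}^{2})$ on $\mathcal{S}$ (resp.\ $(\bar{\mathcal{I}}^{1},\bar{\mathcal{I}}^{2})$ on $\bar{\mathcal{S}}$) such that the functions $I_{1},\dots,I_{6}$ of $\mathbf{g}$ expressed in terms of $(\mathcal{I}^{1},\mathcal{I}^{2})$ coincide with the functions $\bar{I}_{1},\dots,\bar{I}_{6}$ of $\bar{\mathbf{g}}$ expressed in terms of $(\bar{\mathcal{I}}^{1},\bar{\mathcal{I}}^{2})$.

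Second, I would formalise what it means for the equivalence class to be characterised by the functional dependence data. To a metric $\mathbf{g}$ with two functionally independent first-order invariants, associate the datum consisting of the six functions $I_{j}=I_{j}(\mathcal{I}^{1},\mathcal{I}^{2})$ for $j=1,\dots,6$, up to change of the pair $(\mathcal{I}^{1},\mathcal{I}^{2})$ by another pair of functionally independent first-order scalar invariants. The forward direction is clear: since $I_{1},\dots,I_{6},\mathcal{I}^{1},\mathcal{I}^{2}$ are all scalar differential invariants of $\mathfrak{G}^{(1)}_{\tau}$, any pseudogroup transformation carrying $\mathbf{g}$ to $\bar{\mathbf{g}}$ preserves the functional identities $I_{j}=F_{j}(\mathcal{I}^{1},\mathcal{I}^{2})$, so the two metrics yield the same datum.

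Third, for the converse I would invoke the theorem directly: if two metrics $\mathbf{g}$ and $\bar{\mathbf{g}}$ in the generic class produce the same datum, i.e.\ there exist pairs $(\mathcal{I}^{1},\mathcal{I}^{2})$, $(\bar{\mathcal{I}}^{1},\bar{\mathcal{I}}^{2})$ of functionally independent first-order invariants such that $I_{j}(\mathcal{I}^{1},\mathcal{I}^{2})=\bar{I}_{j}(\bar{\mathcal{I}}^{1},\bar{\mathcal{I}}^{2})$ as functions for each $j$, then precisely the hypothesis of the theorem is met, and $\mathbf{g}\sim\bar{\mathbf{g}}$. Combining the two directions, the equivalence class of $\mathbf{g}$ is exactly the class of metrics producing the same datum, which is what the corollary asserts.

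The only subtle point, which I would address briefly, is the well-definedness of the datum under change of the pair $(\mathcal{I}^{1},\mathcal{I}^{2})$. This is immediate from Proposition~\ref{prop:main_1ord}: any two pairs of functionally independent first-order scalar invariants are locally related by an invertible functional transformation, so the collection of six functions $I_{j}(\mathcal{I}^{1},\mathcal{I}^{2})$ is well defined modulo such reparametrisations, and this is respected by the matching condition in the theorem. No new computations or estimates are required; the proof is essentially a packaging of the theorem as a statement about a single equivalence class.
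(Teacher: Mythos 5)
Your proposal is correct and follows essentially the same route as the paper, which states the corollary as an immediate consequence of the preceding theorem: the forward direction holds because the $I_j$ and $(\mathcal{I}^1,\mathcal{I}^2)$ are scalar invariants and hence the functional dependences are isometry-invariant, while the converse is exactly the "if" part of the theorem. Your additional remark on well-definedness of the datum under change of the pair $(\mathcal{I}^1,\mathcal{I}^2)$ is a sensible clarification but introduces nothing beyond what Proposition~\ref{prop:main_1ord} already guarantees.
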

\medskip{}

\subsubsection{Example}

We consider here the Van den Bergh metric 
\[
\begin{array}{l}
\mathbf{g}=\cosh\left(\sqrt{6}\,t^{1}\right)\left\{ \sinh^{4}\left({t^{2}}\right)\left[\left(d{t^{1}}\right)^{2}-\left(d{t^{2}}\right)^{2}\right]+2\,\sinh^{2}\left({t^{2}}\right)\left[d{z^{2}}+\cosh\left({t^{2}}\right)d{t^{1}}\right]^{2}\right\} \vspace{10pt}\\
\qquad\qquad+{\displaystyle \frac{12}{\cosh\left(\sqrt{6}\,t^{1}\right)}}\,\left[d{z^{1}}+\cosh\left({t^{2}}\right)d{z^{2}}+1/2\,\cosh^{2}\left({t^{2}}\right)d{t^{1}}\right]^{2}.
\end{array}
\]
This is a Ricci-flat metric with two Killing vector fields $\partial_{z^{1}}$
and $\partial_{z^{2}}$ and an orthogonally intransitive $\Xi$.

In this case the six fundamental first-order scalar differential invariants
are 
\begin{equation}
\begin{array}{l}
C_{\rho}=-4\,{\displaystyle \frac{\cosh^{2}\left({t^{2}}\right)}{\cosh\left(\sqrt{6}\,t^{1}\right)\sinh^{6}\left({t^{2}}\right)}},\vspace{5pt}\\
C_{\chi}=-6\,{\displaystyle \frac{\sinh^{2}\left(\sqrt{6}\,t^{1}\right)-1}{\cosh^{3}\left(\sqrt{6}\,t^{1}\right)\sinh^{4}\left({t^{2}}\right)}},\vspace{5pt}\\
Q_{\chi}=6\,{\displaystyle \frac{\sinh^{2}\left(\sqrt{6}\,t^{1}\right)\,\left[-6\,\sinh^{2}\left(t^{2}\right)+\cosh^{2}\left({t^{2}}\right)\cosh^{2}\left(\sqrt{6}\,t^{1}\right)\right]}{\cosh^{6}\left(\sqrt{6}\,t^{1}\right)\sinh^{10}\left({t^{2}}\right)}},\vspace{5pt}\\
Q_{\gamma}=-36\,{\displaystyle \frac{\sinh^{2}\left(\sqrt{6}\,t^{1}\right)}{\cosh^{6}\left(\sqrt{6}\,t^{1}\right)\sinh^{8}\left({t^{2}}\right)}},\vspace{5pt}\\
\ell_{\mathcal{C}}=2\,{\displaystyle \frac{1}{\cosh\left(\sqrt{6}\,t^{1}\right)\sinh^{4}\left({t^{2}}\right)}},\vspace{5pt}\\
\left(\Theta_{1}\right)^{2}=144\,{\displaystyle \frac{\sinh^{2}\left(\sqrt{6}\,t^{1}\right)}{\sinh^{16}\left({t^{2}}\right)\cosh^{8}\left(\sqrt{6}\,t^{1}\right)}}.
\end{array}\label{fund_invar_vanderberg}
\end{equation}
Then, by choosing 
\[
\mathcal{I}^{1}=C_{\rho},\qquad\mathcal{I}^{2}=\ell_{\mathcal{C}},
\]
one can write 
\begin{equation}
t^{1}=\frac{1}{\sqrt{6}}\text{arccosh}\left(\frac{2}{\ell_{\mathcal{C}}}\left(\frac{C_{\rho}}{2\ell_{\mathcal{C}}}+1\right)^{2}\right),\qquad t^{2}=\text{arctanh}\left(\frac{1}{\sqrt{1-{\displaystyle \frac{1}{2}\frac{C_{\rho}}{\ell_{\mathcal{C}}}}}}\right).\label{eq:t_vanderberg}
\end{equation}
It follows that, by substituting (\ref{eq:t_vanderberg}) in (\ref{fund_invar_vanderberg}),
the remaining $4$ fundamental first-order scalar differential invariants
reduce to the following functions of $\mathcal{I}^{1}=C_{\rho}$ and
$\mathcal{I}^{2}=\ell_{\mathcal{C}}$: 
\begin{equation}
\left\{ \begin{array}{l}
C_{\chi}={\displaystyle \frac{-3\,{\ell_{\mathcal{C}}}\,\left(-8\,\ell_{\mathcal{C}}^{6}+\left(C_{\rho}^{2}+4C_{\rho}\ell_{\mathcal{C}}+4\ell_{\mathcal{C}}^{2}\right)^{2}\right)}{\left({C_{\rho}}+2\,{\ell_{\mathcal{C}}}\right)^{4}},}\vspace{5pt}\\
{\displaystyle Q_{\chi}=\frac{-3\,{\ell_{\mathcal{C}}}\,\left(48\,\ell_{\mathcal{C}}^{7}+C_{\rho}\,\left(C_{\rho}^{2}+4C_{\rho}\ell_{\mathcal{C}}+4\ell_{\mathcal{C}}^{2}\right)^{2}\right)\left(\left(C_{\rho}^{2}+4C_{\rho}\ell_{\mathcal{C}}+4\ell_{\mathcal{C}}^{2}\right)^{2}-4\ell_{\mathcal{C}}^{6}\right)}{4\,\left({C_{\rho}}+2\,{\ell_{\mathcal{C}}}\right)^{8}},\vspace{5pt}}\\
{\displaystyle Q_{\gamma}=\frac{-36\,\ell_{\mathcal{C}}^{8}\left(\left(C_{\rho}^{2}+4C_{\rho}\ell_{\mathcal{C}}+4\ell_{\mathcal{C}}^{2}\right)^{2}-4\ell_{\mathcal{C}}^{6}\right)}{\left({C_{\rho}}+2\,{\ell_{\mathcal{C}}}\right)^{8}},\vspace{5pt}}\\
(\Theta_{\mathrm{I}})^{2}=-\ell_{\mathcal{C}}^{2}\,Q_{\gamma}.
\end{array}\right.\label{sinvariant_char_VanDerBerg}
\end{equation}
Conditions $C_{\rho}=\mathcal{I}^{1}$, $\ell_{\mathcal{C}}=\mathcal{I}^{2}$
and (\ref{sinvariant_char_VanDerBerg}) 
give an invariant characterisation of the equivalence class of metrics
equivalent to the Van den Bergh metric.

\section{Conclusions}

Considering metrics with two commuting Killing vectors, referred to as $\mathrm G_2$-metrics, we introduced scalar differential
invariants of the first and second order with respect to the pseudogroup of transformations
preserving the Riemannian submersion structure. 
The set of (semi-)invariants is sufficient for the
solution of the equivalence problem in the generic case, which was our first goal. 
Our (semi-)invariants are designed to have tractable coordinate expressions, which is particularly suitable for the equivalence problem. 
The next goal is to look for relations satisfied by known metrics or classes thereof. 
By computing all metrics that satisfy these relations, 
one can, in principle, either extend the set of known solutions or prove an invariant 
characterization of a class of metrics in the spirit of~\cite{F-S}.
To provide an example, we extended the Kundu class of metrics, 
defined by $\ell_{\mathcal C} = 0$,  to the $\Lambda$-vacuum case.
A multitude of such relations have been already identified and will be studied elsewhere.

\section*{Acknowledgements}
The first author was partially supported by CNPq, grant 310577/2015-2 and grant 
422906/2016-6. The second author was supported by GA\v{C}R under
project P201/12/G028.


\end{document}